%% file: main.tex
\documentclass{article}

\usepackage{environment-setup}
\usepackage{macros}

\title{On the gap of quiver representations}
\author{
John Maar
\thanks{Technical University Berlin. \ \email{maar@math.tu-berlin.de}}
}
\date{\today}

\begin{document}

\maketitle

\begin{abstract}
The \emph{nullcone membership problem}, deciding whether an orbit closure contains the origin, is fundamental in computational invariant theory. For self‑adjoint groups, Bürgisser, Franks, Garg, Oliveira, Walter and Wigderson  gave a geodesic optimization algorithm whose complexity is controlled by the \emph{gap}, a condition number of the representation. We study the gap for quiver representations under the action of the special linear group.

We prove that the inverse gap is polynomially bounded in the number of vertices and the maximum dimension for type $A$ and $\hat{A}$ quivers, as well as tree quivers with uniform dimension vectors. Consequently, the algorithm of Bürgisser et al. solves the nullcone membership problem in polynomial time for these families. In contrast, we construct families of quivers and dimension vectors where the gap is exponentially small in the number of leaves, furthermore, for every connected quiver we exhibit dimension vectors such that the \emph{weight margin} (a related condition number) is exponentially small in the number of vertices.

We also extend our results to $\sigma$-semistability, thereby giving a new proof of a recent result of Iwamasa, Oki, and Soma.
    
    
\end{abstract}

\tableofcontents

\section{Introduction}\label{section: introduction}
\input{introduction}

\section{Preliminaries}\label{section: preliminaries}
\input{preliminaries}

\section{Gap of tree quivers with uniform dimension vectors} \label{section: Gap_of_tree_quivers}
\input{tree_quivers}

 \section{Gap of $A_n$ and $\hat{A}_n$ type quivers}\label{section: A_and_hat_A}
\input{A_and_extended_A}

\section{Gap for $\sigma$-semistability}\label{section: sigma_semistability}
\input{sigma_semistability}

\section{Gap of tree quivers with arbitrary dimension vectors} \label{section: star_quivers}
\input{weightmargin_and_gap_star_quivers}

\section*{Acknowledgements}

The author is grateful to his advisor Peter Bürgisser, for many helpful discussions.
Part of this work took place while the author was supported by the Simons Institute for the Theory of Computing, and conducted when the author was visiting the Institute.
The author would like to thank the generosity, hospitality, and excellent research environment provided by the Simons Institute.
The author is supported by the Deutsche Forschungsgemeinschaft (DFG, German Research Foundation, 556164098).

\bibliographystyle{alpha}
\bibliography{References}

 \newpage
\appendix

\makeatletter
\renewcommand{\theHlemma}{\thesection.\arabic{lemma}}
\makeatother

\input{real_minimization_problems}

\end{document}

%% file: introduction.tex
Consider a group $G$ acting linearly on the complex Euclidean space $V=\bC^d$. This action partitions $V$ into orbits $G\cdot v := \{ g\cdot v \;|\; g\in G \}$ for $v\in V$. A fundamental algorithmic question in this setting is the orbit equality problem: given vectors $v,w\in V$ decide if $G\cdot v = G\cdot w$.

This framework captures many classical isomorphism problems. For instance, the module isomorphism problem for finite dimensional algebras is equivalent to an orbit equality problem. Another example is the graph isomorphism problem: Two graphs $G,H$ are isomorphic if and only if their adjacency matrices lie in the same orbit under the symmetric group $\Sym_n$. Here $\sigma\in \Sym_n$ acts on $\bC^n \otimes \bC^n$ via $\sigma \cdot (v\otimes w) = P_\sigma v \otimes P_\sigma w$, where $P_\sigma$ is the associated permutation matrix.

Since $V$ carries the euclidean topology, we may also consider orbit closures. This leads to several algorithmic questions:

\begin{problem}
    Let $G$ act on $V$ and let $u,v\in \bQ(i)^d \subset V$ be vectors with rational coordinates and bit length at most $B\in \bN$. Consider the following problems:
    \begin{enumerate}
    \itemsep0em
    \item \textbf{Orbit equality:} Decide if $G\cdot u = G\cdot v$.
    \item \textbf{Orbit closure containment:} Decide if $v\in \overline{G\cdot u}$.
    \item \textbf{Orbit closure intersection:} Decide if $\overline{G\cdot u} \cap \overline{G\cdot v} \neq \emptyset$.
    \item \textbf{Nullcone membership:} Decide if $0\in \overline{G\cdot v}$.
    \item \textbf{Capacity approximation:} Approximate $\cp(v):= \inf_{g\in G} \|g\cdot v\|$ up to some $\varepsilon > 0$.
\end{enumerate}
\end{problem}
In this paper we restrict ourselves to Zariski closed subgroups $G\subseteq \GL_n$, such that $g\in G$ implies $g^*\in G$. Such subgroups are called self-adjoint and they are exactly the complex reductive linear algebraic groups given in some concrete embedding \cite[\S 3.1-3.2]{Walach}. We also assume that all group actions are rational. Examples falling under these restrictions include the following group actions: \begin{enumerate}
    \itemsep0em
    \item \textbf{simultaneous conjugation:} $\GL_d$ acting on $(\bC^{d\times d})^n$ via $g\cdot (X_1,\ldots, X_n) = (gX_1g^{-1}, \ldots, gX_ng^{-1})$.
    \item \textbf{left-right action:} $\SL_{d_1}\times \SL_{d_2}$ acting on $(\bC^{d_1\times d_2})^n$ via $(g,h)\cdot (X_1,\ldots, X_n) = (gX_1h^{-1}, \ldots, gX_nh^{-1})$.
    \item \textbf{3-tensor action:} $(\GL_d)^3$ acting on $(\bC^{d})^{\otimes 3}$ via $(g_1,g_2,g_3) \cdot (v_1 \otimes v_2\otimes v_3) = (g_1v_1 \otimes g_2v_2\otimes g_3v_3)$.
    \item \textbf{graph isomorphism:} $\Sym_d$ acting on $\bC^d\otimes \bC^d$ via $g\cdot (v_1\otimes v_2) = gv_1 \otimes gv_2$.
\end{enumerate}

In recent years, the complexity of orbit problems has been extensively studied. For instance, the orbit closure intersection problem admits polynomial time algorithms for the left-right action \cite{left_right_numerical, OCS, IcanyosQiao2023} and the conjugation action \cite{OCS, ForbesShpilka2013}. When $G$ is commutative, there are polynomial time algorithms for all of the above problems \cite{https://doi.org/10.4230/lipics.ccc.2021.32}. On the other hand, orbit closure containment for three tensors is NP-hard \cite{OCinclusion_tensors} and orbit closure intersection is graph-isomorphism-hard for various tensor-tuple-representations \cite{lysikov2024complexitytheoryorbitclosure}.

In this paper we are concerned with representations arising from quivers, more precisely for a quiver $Q$ and dimension vector $\alpha$ we consider the representation space $\Rep_\alpha(Q) := \bigoplus_{a\in Q_1} \bC^{\alpha(ha)\times \alpha(ta)}$ with the natural $\GL_\alpha := \prod_{v \in Q_0} \GL_{\alpha(v)}$ and $\SL_\alpha := \prod_{v \in Q_0} \SL_{\alpha(v)}$ actions.

In the case of the $\GL_\alpha$ action all of the above problems, except orbit closure containment, admit polynomial time algorithms \cite{Burgisser_2019, modIso}, while the $\SL_\alpha$ action appears much harder, which stems from the fact, that the invariant theory of the $\SL_\alpha$ action is more complicated than that of the $\GL_\alpha$ action. One general result is that the orbit equality problem for the $\SL_\alpha$ action can be reduced to the module isomorphism problem which admits a polynomial time algorithm \cite{modIso}.

When approaching the orbit closure intersection problem, there are generally two approaches. The first is invariant theoretic: By a remarkable theorem of Mumford \cite{GeometricInvariantTheory} two orbit closures $\overline{G\cdot v},\overline{G\cdot w}$ intersect if and only if every invariant polynomial takes the same value on $v$ and $w$. This naturally leads to the notion of a separating set of invariants, i.e. a set of invariants, that separates orbit closures in this way. In case the set is small, i.e. polynomial in the input parameters and the invariants can be computed efficiently, this yields an efficient algorithm for the orbit closure intersection problem. This is the approach taken in \cite{OCS} for the simultaneous conjugation and left-right action. The second approach is based on numerical optimization algorithms and aims to decide orbit closure intersection, by checking if the distance of the two orbit closures falls under a certain threshold. This is the approach taken in \cite{left_right_numerical} for the left-right and simultaneous conjugation actions.


\subsection{Geodesic optimization and gap}

Consider a connected, self-adjoint subgroup $G \subseteq \GL_n$ acting on a finite dimensional vector space $V$ by linear transformations $\pi:G\to \GL(V)$ as discussed in \cite{Burgisser_2019}. Here, \emph{self-adjoint} means that $G$ is Zariski closed and satisfies $g\in G \implies g^*\in G$. For such a group, we denote $K:= G\cap U_n$, the set of unitary matrices in $G$, which form a maximal compact subgroup of $G$. Associated with $G$ and $K$ are the Lie-algebras
\[\Lie(K) := \{H \in \bC^{n\times n} \;|\; \forall t\in \bR : e^{tH} \in K\} \quad \Lie(G) := \{H \in \bC^{n\times n} \;|\; \forall t\in \bR : e^{tH} \in G\}\]
Then $\Lie(K)$ is a real Lie-algebra and $\Lie(G)$ is a complex Lie-algebra. Moreover $\Lie(K) \subseteq \Lie(U_n) = i \Herm_n$, the space of skew-hermitian matrices. The representation $\pi$ induces a Lie-algebra representation
\[ \Pi: \Lie(G) \to \Lie(\GL(V)) = \LL(V),\quad  H \mapsto \partial_{t=0} \pi(e^{tH}).\]
The representation $\pi$ also induces an action on the polynomial ring $\bC[V]$ via $(g\cdot f)(v) := f(g^{-1} \cdot v)$. The invariant polynomials form a ring called the \emph{invariant ring}
\[ \bC[V]^G := \{ f\in \bC[V] \;|\; g\cdot f = f \}.\]
A vector $v\in V$ is called \emph{semistable}, when its orbit closure does not contain the origin, otherwise it is called \emph{unstable}. The \emph{capacity} of a vector $v\in V$ is defined as
\[ \cp(v) := \inf_{g\in G} \|g\cdot v\|. \]
The \emph{nullcone} is defined as the set of unstable vectors $v$. It is an important result by Hilbert, that the nullcone is also the vanishing locus of the non-constant homogeneous invariants of $\pi$, see for example \cite[\S 9]{ITQR}.
\begin{equation}
    \label{eqn_nullcone_def}
    \NC(\pi) :=\{ v\in V \;|\; 0\in \overline{G\cdot v} \} = \{v\in V \;|\; f(v) = 0 \text{ for all } f\in \bigoplus_{d=1}^\infty \bC[V]^G_{d} \}
\end{equation}
In \cite{Burgisser_2019} the authors proposed the framework of geodesic optimization to solve orbit problems numerically. The runtime of these algorithms depends on certain condition numbers, which depend on the group action and might be exponentially small in the input size. An important tool in the analysis of these algorithms is the so called moment map, which can be thought of as a derivative of the \emph{Kempf-Ness function}
\[ F_v : G\to \bR, \quad g\mapsto \frac{1}{2}\log \|\pi(g) \cdot v\|^2.\]
The \emph{moment map} $\mu:V\backslash\{0\} \to i \Lie(K)$ is defined by the property that for all $H\in i\Lie(K)$, we have
\[ \tr(\mu(v) H) = \partial_{t=0} F_v(e^{tH}) = \frac{\langle v, \Pi(H)v\rangle}{\|v\|^2}.\]
Now we are ready to define the key condition numbers. The \emph{gap} of the representation $\pi$ is defined as
\[ \gamma(\pi) := \inf\{ \|\mu(v)\| \;|\; v\in \NC(\pi)\backslash\{0\} \}. \]
Fix a maximal commutative subgroup $T$ of $G$ (For $\SL_\alpha$ the natural choice is $\ST_\alpha$). Restricting $\pi$ to $T$ gives a representation $\pi_T :T \to \GL(V)$. Its gap is called the \emph{weight margin} $\gamma_T(\pi) := \gamma(\pi_T)$ and is independent of the choice of torus, as follows from the characterization below. The weight margin plays a central role in the algorithms of \cite{Burgisser_2019}. For any $G$ and $\pi$, the gap is lower bounded by the weight margin
\[\gamma_T(\pi) \leq \gamma(\pi).\]
A \emph{weight} of the representation $\pi_{T}$ is a vector $w \in \bZ^n$, such that there exists a vector $v \in V$ such that
\[ \diag(t_1, \ldots, t_n) \cdot v = \prod_{k=1}^n t_k^{w_k} v, \]
for all $t\in T$. The set of weights is denoted as $\Omega(\pi)$ and independent of the choice of torus $T$. The weight margin can be expressed in terms of weights as
\[ \gamma_T(\pi) = \min \{\dist(\conv(A), 0) \; |\; A \subseteq \Omega(\pi),\; 0 \not\in \conv(A)\},\]
where $\dist$ is the euclidean distance. For proofs of these statements we refer the reader to \cite{Burgisser_2019}. The \emph{weight norm} of $\pi$ is defined as
\[ N(\pi) := \max \{ \|w\|_2 \; |\; w \in \Omega(\pi) \}.\]
The importance of the gap for the nullcone membership problem is evident from Algorithm 4.2, Theorem 4.2 and Theorem 8.4  in \cite{Burgisser_2019}, which we restate here

\begin{algorithm}
\label{algorithm_4.2}
\caption{Algorithm 4.2 in \cite{Burgisser_2019} for the scaling problem}
\KwIn{\begin{itemize}
    \item A vector $v \in V$.
    \item Oracle access to the moment map restricted to the orbit of $v$, i.e. $g \mapsto \mu(g \cdot v)$.
    \item A number of iterations $T$ and a step size $\eta$.
\end{itemize}}
\KwOut{A group element $g \in G$.}
$g_0 \gets I$\;
\For{$t \gets 0$ \KwTo $T-1$}{
$g_{t+1} \gets e^{-\eta\mu(g_t \cdot v)}g_t$\;}
\KwRet{$g_T$}
\end{algorithm}

\begin{theorem}[Theorem 4.2 in \cite{Burgisser_2019} ]
    Let $v \in V$ be a semistable vector (i.e. $\cp(v) > 0$). For every $\varepsilon > 0$, Algorithm \ref{algorithm_4.2}  with step size $\eta := \frac{1}{2 N(\pi)^2}$ and
    \[ T \geq \frac{4 N(\pi)^2}{\varepsilon^2} \log(\frac{\|v\|}{\cp(v)}) \]
    iterations returns a group element $g\in G$ such that $\|\mu(g \cdot v)\| \leq \varepsilon$.
\end{theorem}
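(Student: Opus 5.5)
We argue by a potential-function analysis of gradient descent for the Kempf--Ness function, using the log of the norm as the potential. Write $v_t := g_t\cdot v$ and $f(t) := \log\|v_t\|^2$. Since $v$ is semistable, every $v_t$ lies in $G\cdot v$, so $\|v_t\|\geq \cp(v)>0$. This gives the lower bound
\[ f(T)\geq 2\log\cp(v). \]
At the start, $f(0)=2\log\|v\|$. The plan is to show that each step decreases $f$ by at least a constant multiple of $\eta\|\mu(v_t)\|^2$ as long as $\|\mu(v_t)\|>\varepsilon$. Summing over the iterations then bounds the number of steps with large moment map, and hence yields some iterate with $\|\mu(g\cdot v)\|\leq\varepsilon$. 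We should then take $g$ to be the iterate of smallest moment map norm, or argue that the returned $g_T$ already works. As stated, the theorem is about the output $g_T$, so the standard reading is that the algorithm is stopped as soon as $\|\mu\|\le\varepsilon$ (or outputs the best iterate); I would make this explicit.

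The key step is a descent lemma based on a smoothness estimate. For $H\in i\Lie(K)$, the operator $\Pi(H)$ is Hermitian. Consider $h(s):=\log\|e^{sH}\cdot w\|^2 = \log\langle w, e^{2s\Pi(H)}w\rangle$. Its first derivative at $0$ is $2\tr(\mu(w)H)$. Its second derivative is twice a variance of $\Pi(H)$ with respect to a probability distribution on the weights. That variance is bounded by $4\|\Pi(H)\|_{op}^2 \leq 4N(\pi)^2\|H\|^2$. To see this, diagonalize $\Pi(H)$ via a torus containing $H$ (conjugate by $K$): its eigenvalues are $\langle \omega, h\rangle$ for weights $\omega$, and these are bounded by $N(\pi)\|H\|$ by Cauchy--Schwarz.

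Apply this with $w=v_t$ and $H=-\mu(v_t)$ for $s\in[0,\eta]$. Taylor's theorem gives
\[ f(t+1)\le f(t) - 2\eta\|\mu(v_t)\|^2 + 4\eta^2 N(\pi)^2\|\mu(v_t)\|^2. \]
With $\eta = 1/(2N(\pi)^2)$ the last two terms cancel exactly. So the constants need care: I expect to use the sharper bound $\operatorname{Var}\le \|\Pi(H)\|_{op}^2$, with no factor $4$. This gives second derivative at most $4N^2\|H\|^2$ and remainder $2\eta^2N^2\|\mu\|^2$, and therefore
\[ f(t+1)\le f(t)-\eta\|\mu(v_t)\|^2 = f(t) - \frac{\|\mu(v_t)\|^2}{2N(\pi)^2}. \]

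Now suppose $\|\mu(v_t)\|>\varepsilon$ for all $t<T$. Telescoping gives
\[ 2\log\cp(v) \le 2\log\|v\| - \frac{T\varepsilon^2}{2N(\pi)^2}, \]
so $T < \frac{4N(\pi)^2}{\varepsilon^2}\log\frac{\|v\|}{\cp(v)}$. This contradicts the hypothesis on $T$.

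The main obstacle is getting the smoothness constant right, i.e. the operator-norm bound $\|\Pi(H)\|_{op}\le N(\pi)\|H\|$ combined with the exact variance bound. Secondarily, one has to handle that the statement concerns the final iterate rather than the best one. For the first, I would use $K$-conjugation of $H$ into $i\Lie(T\cap K)$, together with the $K$-invariance of norms and of the weight set. For the second, I would note that $f$ is nonincreasing along the iterates, and state that the algorithm halts at the first iterate with small moment map.
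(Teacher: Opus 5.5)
Your argument is correct and is exactly the standard proof from \cite{Burgisser_2019}, which the paper only cites without reproducing. The bound $\|\Pi(H)\|_{op}\le N(\pi)\|H\|$ caps the second derivative of $s\mapsto\log\|e^{sH}\cdot w\|^2$ at $4N(\pi)^2\|H\|^2$, so with $\eta=1/(2N(\pi)^2)$ each step lowers $\log\|g_t\cdot v\|^2$ by at least $\|\mu(g_t\cdot v)\|^2/(2N(\pi)^2)$, and telescoping against $\cp(v)$ gives the iteration bound. Your caveat about the output is also right. In \cite{Burgisser_2019} the algorithm returns the iterate among $g_0,\ldots,g_{T-1}$ with the smallest $\|\mu(g_t\cdot v)\|$, whereas the restated Algorithm~\ref{algorithm_4.2} returns $g_T$, and the descent argument alone does not control that final iterate. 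So your best-iterate (or early-stopping) reading is the intended one.
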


\begin{theorem}[Theorem 8.4 in \cite{Burgisser_2019}]
\label{theorem_runtime8.4}
    Let $G := \SL(n_1) \times \ldots \times \SL(n_k)$ and $\pi : G \to \GL(m)$ be the restriction of a polynomial representation with degrees bounded by $d$ and bitsize of coefficients bounded by $R$ assume further that $K^{-1} \|v\| \leq \|v\|_2 \leq K \|v\|$ for any $v \in \bC^m$. Put $n:= n_1 +\ldots+n_k$. Let $v \in V$ be a vector, whose components are gaussian integers bounded in absolute value by $M$. Then Algorithm \ref{algorithm_4.2} with $\varepsilon := \frac{\gamma(\pi)}{2}$ solves the nullcone membership problem in time
    \[ \poly(K, n, d, \log(RmMK), \gamma(\pi)^{-1}). \]
\end{theorem}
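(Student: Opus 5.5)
The plan is to run Algorithm \ref{algorithm_4.2} with $\varepsilon := \gamma(\pi)/2$, step size $\eta := \frac{1}{2N(\pi)^2}$ and a number of iterations $T$ chosen below. After the loop (or after each iteration) we test whether $\|\mu(g_t\cdot v)\| \le \gamma(\pi)/2$. We output ``semistable'' if the test succeeds and ``unstable'' otherwise.

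\textbf{Correctness} comes from the definition of the gap together with Theorem 4.2.
\begin{itemize}
\item If $v\in\NC(\pi)$, then every $g\cdot v$ lies in the $G$-stable set $\NC(\pi)\setminus\{0\}$. Hence $\|\mu(g\cdot v)\|\ge\gamma(\pi)$ for every $g\in G$, and the test can never succeed.
\item If $v$ is semistable, Theorem 4.2 guarantees $\|\mu(g_T\cdot v)\|\le\gamma(\pi)/2$ as soon as $T\ge \frac{16N(\pi)^2}{\gamma(\pi)^2}\log\frac{\|v\|}{\cp(v)}$.
\end{itemize}
Since the two cases are separated by the factor $2$, the test still works if the moment map is only computed to additive accuracy $\gamma(\pi)/4$.

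It remains to bound each quantity in $T$ polynomially.

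\textbf{Weight norm.} For $G=\prod\SL(n_i)$ and a polynomial representation of degree at most $d$, every weight of $\pi_T$ is the trace-zero projection of a vector with nonnegative integer entries summing to at most $d$ in each block. Therefore $N(\pi)\le d\sqrt{k}\le d\sqrt n$.

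\textbf{Norm of $v$.} We have $\|v\|\le K\|v\|_2\le K\sqrt m\,M$. So $\log\|v\|$ is bounded by $\log(KmM)$.

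\textbf{Capacity lower bound.} This is the main obstacle. I would argue via invariants.
\begin{enumerate}
\item Since $v$ is semistable, Hilbert's characterization \eqref{eqn_nullcone_def} gives a homogeneous invariant $f$ with $f(v)\ne0$.
\item By Derksen-type degree bounds for generators of $\bC[V]^G$, we may take $\deg f=D$ with $D\le \exp(\poly(n,d))$, and hence $\log D$ is polynomial. For example, we can use invariants obtained from the Reynolds operator or Cayley's $\Omega$-process applied to monomials.
\item The same constructions give $f$ with Gaussian-integer coefficients of bitsize $\poly(D\text{-dependent terms}, \log(Rm))$. These yield $|f(v)|\ge1$, since $v$ has Gaussian-integer entries.
\item Let $\|f\|_1$ denote the sum of the absolute values of the coefficients. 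Then for every $g\in G$,
\[ 1\le |f(v)|=|f(g\cdot v)|\le \|f\|_1\,\|g\cdot v\|_2^{D}\le \|f\|_1 K^D\|g\cdot v\|^D. \]
\item Hence $\log\frac1{\cp(v)}\le \frac{\log\|f\|_1}{D}+\log K$.
\end{enumerate}
The delicate point is controlling $\log\|f\|_1/D$ polynomially, which requires an explicit coefficient bound for the chosen generating invariants. I would first try to bound the coefficients of $\Omega$-process invariants by $(\text{const}\cdot 2^R m)^{O(D)}$, which makes the ratio polynomial.

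\textbf{Bit complexity.} Finally, I would check that the iteration can be carried out with finite precision. The moment map $\mu(g\cdot v)$ is a polynomial expression in the entries of $\pi(g)v$. The matrix exponentials $e^{-\eta\mu}$ involve matrices of norm $O(1)$, so truncated Taylor series suffice. Standard perturbation arguments, as in the paper's Section 8 setting, then show that rounding to $\poly(K,n,d,\log(RmMK),\gamma^{-1})$ bits keeps each iterate close enough that both the $\gamma/4$ accuracy of $\mu$ and the descent guarantee of Theorem 4.2 survive. Multiplying the iteration count $T=\poly(d,n,\gamma^{-1},\log(RmMK))$ by the polynomial per-step cost gives the claimed running time.
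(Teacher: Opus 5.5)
Your plan has the right shape. The paper does not reprove this statement; it quotes it from \cite{Burgisser_2019}. There it follows by inserting a bound on $N(\pi)$ and a lower bound on $\cp(v)$ for Gaussian-integer vectors into the iteration bound of Theorem 4.2, together with the fact that $\|\mu(g\cdot v)\|\ge\gamma(\pi)$ on $\NC(\pi)\setminus\{0\}$. Your separation argument and your bounds $N(\pi)\le d\sqrt{k}$ and $\|v\|\le K\sqrt{m}M$ are fine.

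\textbf{The gap.} The one nontrivial ingredient, $\log(\|v\|/\cp(v))\le\poly(n,d,\log(RmMK))$, is not proved. Your steps 1--5 correctly reduce it to bounding $\log\|f\|_1/D$ for a Gaussian-integer invariant $f$ with $f(v)\neq0$. At that point you only say what you ``would first try''. Without this estimate Theorem 4.2 gives no polynomial bound on $T$, so as written the proposal does not prove the theorem.

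\textbf{The estimate you plan to try is not what the $\Omega$-process gives.} You hope for $\|f\|_1\le(\mathrm{const}\cdot2^Rm)^{O(D)}$ with a constant independent of $D$. Take a single factor $\SL_n$ and suppose $f(\pi(g)v)$ is homogeneous of degree $eD=nN$ in $g$, with $e\le d$.
\begin{enumerate}
\item The Reynolds operator is $\Omega^N(\,\cdot\,)/c_{n,N}$ with $c_{n,N}=\prod_{i=0}^{n-1}(N+i)!/i!$. To get integral coefficients, and hence $|f(v)|\ge1$, you must clear this denominator.
\item The direct estimate of $\Omega^N$ on monomials contributes factorials of the same size.
\end{enumerate}
What this route actually yields is $\log\|f\|_1/D=O\big(d\log(ndD)+\log m+R\big)$, which contains a $\log D$ term. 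It becomes polynomial only after invoking Derksen's bound $\log D\le\poly(n,\log d)$. So the degree bound in your step 2 is essential, not decoration. The missing content is the explicit coefficient estimate for $c_{n,N}\,\Omega^N\big(x^\beta(\pi(g)v)\big)$, including splitting a non-homogeneous polynomial representation into homogeneous pieces.

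\textbf{Smaller points.}
\begin{itemize}
\item The bound above has a term linear in $R$ (read as a bitsize). Your closing claim that $T$ depends on $R$ only through $\log R$ therefore does not follow from your own estimates.
\item If $\mu$ is computed only to accuracy $\gamma/4$, the acceptance threshold must be $3\gamma/4$, not $\gamma/2$. Otherwise a semistable $v$ can be rejected.
\item The finite-precision paragraph is an assertion rather than an argument.
\end{itemize}
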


\subsection{Representation theory of quivers}

Throughout this paper we work with the field of complex numbers. In particular all finite dimensional vector spaces are over $\bC$ and endowed with an inner product $\langle v,w\rangle$. A \emph{quiver} $Q=(Q_0,Q_1,h,t)$ consists of a vertex set $Q_0$, and arrow set $Q_1$ and head/tail maps $h,t:Q_1\to Q_0$ indicating the heads and tails of the arrows. 

A \emph{representation} $V$ of $Q$ consists of a collection of finite dimensional vector spaces $V(i)$ for $i\in Q_0$ as well as a collection of maps $V(a):V(ta) \to V(ha)$ for $a\in Q_1$. The vector of dimensions of $V$ is called the \emph{dimension vector} and denoted by $\dim(V):=(\dim(V(i))_{i\in Q_0}$. A \emph{morphism} $\phi:V\to W$ of representations consists of a collection of maps $\phi(i):V(i)\to W(i)$, such that the following diagram commutes for all $a\in Q_1$.
\[\begin{tikzcd}
	{V(ta)} \arrow[r, "V(a)"] \arrow[d, "\phi(ta)"] & {V(ha)} \arrow[d, "\phi(ha)"] \\
	{W(ta)} \arrow[r, "W(a)"] & {W(ha)}
\end{tikzcd}\]
This yields the \emph{category of representations} of $Q$ denoted by $\Rep(Q)$. The direct sum of two representations $V,W\in \Rep(Q)$ is defined pointwise and arrowwise, i.e. $(V\oplus W)(i) := V(i)\oplus W(i)$ for $i\in Q_0$ and \[(V\oplus W)(a):= \begin{pmatrix}
    V(a) & 0 \\ 0 & W(a)
\end{pmatrix}\]
With this notion of direct sum $\Rep(Q)$ becomes an abelian category. A representation which is isomorphic to a direct sum of two nonzero representations is called \emph{decomposable}. A quiver $Q$ is of \emph{finite representation type}, when there exist only finitely many indecomposable representations of $Q$ up to isomorphism. A quiver $Q$ is of \emph{tame representation type}, when for each dimension vector $\alpha$, one can parametrize all isomorphism classes of indecomposable representations of dimension $\alpha$ (except a finite number) by finitely many 1-parameter families, for the precise definition we refer to \cite[Definition 19.3.3]{ERTAA}. All other quivers are of \emph{wild representation type}. A celebrated result by Gabriel classifies the quivers of finite type as unions of the following classes of quivers, called \emph{Dynkin quivers}, where the subscript denotes the number of vertices and each edge stands for an arrow of arbitrary direction.

\[\begin{tikzcd}
	{A_n:} & \bullet & \bullet & \ldots & \bullet \\
	&&&&& \bullet \\
	{D_n:} & \bullet & \bullet & \ldots & \bullet \\
	&&& \bullet && \bullet \\
	{E_6:} & \bullet & \bullet & \bullet & \bullet & \bullet \\
	&&& \bullet \\
	{E_7:} & \bullet & \bullet & \bullet & \bullet & \bullet & \bullet \\
	&&& \bullet \\
	{E_8:} & \bullet & \bullet & \bullet & \bullet & \bullet & \bullet & \bullet
	\arrow[no head, from=5-2, to=5-3]
	\arrow[no head, from=5-3, to=5-4]
	\arrow[no head, from=5-4, to=5-5]
	\arrow[no head, from=5-4, to=4-4]
	\arrow[no head, from=5-5, to=5-6]
	\arrow[no head, from=7-2, to=7-3]
	\arrow[no head, from=7-3, to=7-4]
	\arrow[no head, from=7-4, to=7-5]
	\arrow[no head, from=7-5, to=7-6]
	\arrow[no head, from=7-6, to=7-7]
	\arrow[no head, from=7-4, to=6-4]
	\arrow[no head, from=9-2, to=9-3]
	\arrow[no head, from=9-3, to=9-4]
	\arrow[no head, from=9-4, to=9-5]
	\arrow[no head, from=9-5, to=9-6]
	\arrow[no head, from=9-6, to=9-7]
	\arrow[no head, from=9-7, to=9-8]
	\arrow[no head, from=9-4, to=8-4]
	\arrow[no head, from=3-2, to=3-3]
	\arrow[no head, from=3-3, to=3-4]
	\arrow[no head, from=3-4, to=3-5]
	\arrow[no head, from=3-5, to=2-6]
	\arrow[no head, from=3-5, to=4-6]
	\arrow[no head, from=1-2, to=1-3]
	\arrow[no head, from=1-3, to=1-4]
	\arrow[no head, from=1-4, to=1-5]
\end{tikzcd}\]

\begin{theorem}[Gabriel's Theorem \cite{Gabriel1972}]
\label{gabriels_theorem}
    A connected quiver $Q$ is of finite type if and only if it's a Dynkin quiver of type $A_n, D_n, E_6, E_7$ or $E_8$. 
\end{theorem}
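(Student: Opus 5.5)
The plan is the classical route through the Tits form
\[ q_Q(\alpha) := \sum_{i\in Q_0} \alpha(i)^2 - \sum_{a\in Q_1} \alpha(ta)\alpha(ha), \]
which satisfies $q_Q(\alpha) = \dim \GL_\alpha - \dim \Rep_\alpha(Q)$. I would prove each direction by relating finite type to positive definiteness of $q_Q$, and then relating positive definiteness to the Dynkin diagrams.

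\emph{Finite type $\Rightarrow$ Dynkin.} If $Q$ has finitely many indecomposables, then by Krull--Schmidt every $\Rep_\alpha(Q)$ is a finite union of $\GL_\alpha$-orbits. Hence some orbit is dense, so some orbit has dimension $\dim \Rep_\alpha(Q)$. Every stabilizer contains the scalars $\bC^*\cdot I$, so every orbit has dimension at most $\dim \GL_\alpha - 1$. This gives $q_Q(\alpha)\ge 1$ for all nonzero $\alpha\in \bZ_{\ge 0}^{Q_0}$. The inequality $q_Q(|x|)\le q_Q(x)$ extends this to all nonzero $x\in\bZ^{Q_0}$, and by homogeneity and density to rational and then real vectors. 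Thus $q_Q$ is positive definite, at least in the weak sense needed below.

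Next I would rule out everything non-Dynkin combinatorially.
\begin{itemize}
\item A loop at $i$ gives $q_Q(e_i)=0$.
\item A double arrow between $i$ and $j$ gives $q_Q(e_i+e_j)=0$.
\item For each extended Dynkin graph $\hat A_n,\hat D_n,\hat E_6,\hat E_7,\hat E_8$, I would write down the explicit imaginary root $\delta>0$ with $q(\delta)=0$.
\end{itemize}
Finally, every connected graph that is not of type $A,D,E$ contains one of these as a subgraph. For a subquiver $Q'$ and $\alpha$ supported on $Q'$ we have $q_Q(\alpha)\le q_{Q'}(\alpha)$, which contradicts $q_Q(\alpha)\ge1$. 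This combinatorial step is standard: check branch points, the number of branch points, and the lengths of the arms.

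\emph{Dynkin $\Rightarrow$ finite type.} Here I would use Bernstein--Gelfand--Ponomarev reflection functors. For a sink $k$, the functor $S_k^+$ replaces $V(k)$ by the kernel of $\bigoplus_{ha=k} V(ta)\to V(k)$ and reverses the arrows at $k$; $S_k^-$ is defined dually at a source. The key steps are:
\begin{enumerate}
\item If $V$ is indecomposable and not the simple $S(k)$, then $S_k^+V$ is indecomposable, $S_k^-S_k^+V\cong V$, and $\dim S_k^+V = s_k(\dim V)$, where $s_k$ is the simple reflection for $q_Q$.
\item Choose an admissible ordering of sinks, so that the product of these reflections is a Coxeter element $c$.
\item Since $q_Q$ is positive definite, the Weyl group is finite and $c$ has no nonzero fixed vector. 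Hence $1+c+\dots+c^{h-1}=0$, so for every positive $\alpha$ some $c^m\alpha$ fails to be positive.
\item Apply the reflection functors to an indecomposable $V$ until the dimension vector would become non-positive. This can only happen when the current representation is simple, so $V\cong S^-_{\dots}S(k)$.
\end{enumerate}
It follows that $V$ is determined up to isomorphism by $\dim V$, and that $\dim V$ is a positive root, i.e.\ a positive vector with $q_Q(\dim V)=1$. Positive definiteness makes this root set finite, which gives finitely many indecomposables.

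The main obstacle I expect is step 1 of the sufficiency direction: showing that reflection functors preserve indecomposability and act on dimension vectors by $s_k$. This needs the surjectivity of $\bigoplus V(ta)\to V(k)$ at a sink whenever $V$ is indecomposable and not $S(k)$, since otherwise $S(k)$ splits off as a direct summand. I would establish that splitting lemma first and derive the rest from it.
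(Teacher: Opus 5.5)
The paper gives no proof of this statement. It is quoted from \cite{Gabriel1972} as background, and the only consequence used later, \Cref{indecomposables_of_AD}, is quoted as well. So there is nothing internal to compare against. Your outline is the standard one: an orbit-dimension count for necessity, and Bernstein--Gelfand--Ponomarev reflection functors for sufficiency. Its steps check out:
\begin{itemize}
\item The diagonal scalars act trivially, so $q_Q(\alpha)\ge 1$ for every nonzero $\alpha\ge 0$.
\item The subquiver comparison is valid because your test vectors $e_i$, $e_i+e_j$ and $\delta$ are nonnegative.
\item In step 4, the stopping index is the first $j$ (indices read cyclically) with $s_{k_j}\cdots s_{k_1}\dim V = e_{k_{j+1}}$. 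This depends only on $\dim V$, so $V$ is indeed determined by its dimension vector.
\end{itemize}
A side benefit of your route is that it proves exactly what the paper uses. For $A_n$ one has $2q_Q(x) = x_1^2+\sum_{i=1}^{n-1}(x_i-x_{i+1})^2+x_n^2$, so the positive roots are the indicator vectors of intervals, which is \Cref{indecomposables_of_AD}.

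There is one step you assert without support. In the sufficiency direction, step 3 and the final finiteness claim use that $q_Q$ is positive definite for Dynkin $Q$, but nothing earlier in your proposal gives this. Your first half derives positivity from finite type, which is the conclusion here. Your combinatorial step only shows that non-Dynkin graphs carry a nonnegative isotropic vector, not that Dynkin graphs do not. You need a direct check. Two standard options:
\begin{itemize}
\item Apply Sylvester's criterion to $2I-M$, where $M$ is the adjacency matrix of the underlying graph. Order the vertices so that every initial segment is connected, hence again Dynkin, and use that the determinants are $n+1,4,3,2,1$ for $A_n,D_n,E_6,E_7,E_8$.
\item Use the lemma that a connected graph with a sincere nonnegative radical vector $\delta$ has positive semidefinite form with radical $\bQ\delta$. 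Apply it to the extended diagram and restrict to $x_0=0$ at an extending vertex with $\delta_0=1$.
\end{itemize}
Incidentally, the ``weak sense'' hedge in your first half is unnecessary. A positive semidefinite form with rational coefficients that is positive on $\bQ^{Q_0}\setminus\{0\}$ is definite, because its radical is a rational subspace. With the positive-definiteness check inserted, the argument is complete up to the standard BGP lemmas you already flagged.
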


Gabriel’s theorem was soon extended to the tame case \cite{donovan1973representation, LANazarova_1973}, where the representation theory is still manageable. The connected quivers of tame type are precisely the \emph{extended Dynkin quivers}:

\[\begin{tikzcd}
	{\hat{A}_n:} & \bullet & \bullet & \ldots & \bullet & \bullet \\
	& \bullet &&&& \bullet \\
	{\hat{D}_n:} && \bullet & \ldots & \bullet \\
	& \bullet && \bullet & \bullet & \bullet \\
	{\hat{E}_6:} & \bullet & \bullet & \bullet & \bullet & \bullet \\
	&&&& \bullet \\
	{\hat{E}_7:} & \bullet & \bullet & \bullet & \bullet & \bullet & \bullet & \bullet \\
	&&& \bullet \\
	{\hat{E}_8:} & \bullet & \bullet & \bullet & \bullet & \bullet & \bullet & \bullet & \bullet
	\arrow[no head, from=5-2, to=5-3]
	\arrow[no head, from=5-3, to=5-4]
	\arrow[no head, from=5-4, to=5-5]
	\arrow[no head, from=5-4, to=4-4]
	\arrow[no head, from=5-5, to=5-6]
	\arrow[no head, from=7-2, to=7-3]
	\arrow[no head, from=7-3, to=7-4]
	\arrow[no head, from=7-4, to=7-5]
	\arrow[no head, from=7-5, to=7-6]
	\arrow[no head, from=7-6, to=7-7]
	\arrow[no head, from=9-2, to=9-3]
	\arrow[no head, from=9-3, to=9-4]
	\arrow[no head, from=9-4, to=9-5]
	\arrow[no head, from=9-5, to=9-6]
	\arrow[no head, from=9-6, to=9-7]
	\arrow[no head, from=9-7, to=9-8]
	\arrow[no head, from=9-4, to=8-4]
	\arrow[no head, from=3-3, to=3-4]
	\arrow[no head, from=3-4, to=3-5]
	\arrow[no head, from=3-5, to=2-6]
	\arrow[no head, from=3-5, to=4-6]
	\arrow[no head, from=1-2, to=1-3]
	\arrow[no head, from=1-3, to=1-4]
	\arrow[no head, from=1-4, to=1-5]
	\arrow[no head, from=3-3, to=2-2]
	\arrow[no head, from=3-3, to=4-2]
	\arrow[no head, from=4-4, to=4-5]
	\arrow[no head, from=7-5, to=6-5]
	\arrow[no head, from=7-7, to=7-8]
	\arrow[no head, from=9-8, to=9-9]
	\arrow[no head, from=1-5, to=1-6]
	\arrow[no head, from=1-6, to=1-2, bend right=20]
\end{tikzcd}\]

\begin{theorem}
    A connected quiver $Q$ is of tame type if and only if it is one of the extended Dynkin quivers $\hat{A}_n, \hat{D}_n, \hat{E}_6, \hat{E}_7$ or $\hat{E}_8$
\end{theorem}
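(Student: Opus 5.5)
The plan is to follow the classical route through the Tits form, as in the proofs of Donovan--Freislich and Nazarova. For a quiver $Q$ we set
\[ q_Q(\alpha) := \sum_{i\in Q_0} \alpha(i)^2 - \sum_{a\in Q_1} \alpha(ta)\alpha(ha), \]
and note that
\[ q_Q(\alpha) = \dim \GL_\alpha - \dim \Rep_\alpha(Q). \]
The proof splits into three steps:
\begin{enumerate}
\item a purely combinatorial classification of the connected graphs whose form $q_Q$ is positive semidefinite;
\item a dimension count showing that an indefinite form forces wildness;
\item an explicit description of the indecomposables of extended Dynkin quivers, showing that they are tame.
\end{enumerate}
Since none of this depends on the orientation, we may pick a convenient orientation whenever it helps.

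\textbf{Combinatorial step.} For each extended Dynkin diagram $\hat{A}_n,\hat{D}_n,\hat{E}_6,\hat{E}_7,\hat{E}_8$ we write down the null root $\delta$ (the minimal positive imaginary root). We check that $q(\delta)=0$ and that $q$ is positive semidefinite with radical $\bZ\delta$. This can be done by exhibiting $q$ as a sum of squares, or by using that removing a vertex $i$ with $\delta(i)=1$ leaves a Dynkin diagram, whose form is positive definite.

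Next, let $Q$ be a connected graph that is neither Dynkin nor extended Dynkin. We show that $Q$ properly contains an extended Dynkin subgraph $Q'$; this is the standard case analysis on cycles, branch vertices and arm lengths. Then $\alpha := 2\delta_{Q'} + e_j$, with $j$ a vertex adjacent to $Q'$, satisfies $q_Q(\alpha) < 0$. Hence the connected quivers with semidefinite form are exactly the Dynkin and extended Dynkin ones.

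\textbf{Wildness step.} Suppose $q_Q(\alpha) \le -1$ for some connected $Q$ and some $\alpha$; we may take $\alpha$ sincere. Every $\GL_\alpha$-orbit in $\Rep_\alpha(Q)$ has dimension at most $\dim \GL_\alpha - 1$, because the scalars act trivially. So the family of isoclasses has dimension at least $1 - q_Q(\alpha) \ge 2$.

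We must make sure these are indecomposables. For that we use a $\GL_\alpha$-stable open set. Alternatively, we count decomposables: the locus of representations splitting as $\beta\oplus\gamma$ has dimension at most $\dim \Rep_\alpha(Q) - \langle\beta,\gamma\rangle - \langle\gamma,\beta\rangle$ plus lower order terms, and we bound it below the total. This shows that a generic representation of dimension $\alpha$ is indecomposable.

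Passing to $k\alpha$, the number of parameters grows like $1 - k^2 q(\alpha)$. So no finite number of one-parameter families can cover dimension $k\alpha$ for large $k$, and $Q$ is not tame. Combined with Theorem~\ref{gabriels_theorem}, which handles the Dynkin case, this proves the ``only if'' direction.

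\textbf{Tameness step (main obstacle).} For an extended Dynkin quiver we use Bernstein--Gelfand--Ponomarev reflection functors and the Coxeter functor $C^+$, together with the defect $\partial(\alpha) := \langle \delta, \alpha\rangle$. Indecomposables of nonzero defect are preprojective or preinjective. They are obtained from simple projectives or simple injectives by iterating $C^\pm$, are determined by their dimension vector (a real root), and are therefore finitely many in each dimension.

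The hard part is the regular indecomposables, those of defect $0$. Here we would show:
\begin{enumerate}
\item the regular representations form an abelian, exact subcategory;
\item it is a direct sum of tubes indexed by $\mathbb{P}^1(\bC)$;
\item all but at most three of these tubes are homogeneous, of rank $1$.
\end{enumerate}
To do this, we first reduce to the Kronecker quiver $\hat{A}_1$ by a one-point-extension/shrinking argument, or else handle the case $\hat{A}_n$ directly via Jordan normal form of the monodromy around the cycle. We then treat $\hat{D}_n$ and $\hat{E}$ by exhibiting the simple regular objects of the exceptional tubes explicitly.

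From this description we conclude the following. In dimension $\alpha$ there are finitely many indecomposables when $\alpha$ is a real root. When $\alpha = k\delta$, they are parametrized by finitely many one-parameter families: one from the homogeneous tubes, plus finitely many from the exceptional tubes. This is exactly the tameness condition of \cite[Definition 19.3.3]{ERTAA}.
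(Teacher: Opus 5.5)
The paper does not prove this statement; it is quoted from \cite{donovan1973representation, LANazarova_1973}, so there is no in-paper argument to compare with. Your outline follows the standard route: the Tits form plus an orbit-dimension count for wildness, and defect and tubes in the style of Dlab--Ringel for tameness. However, the wildness step rests on a false claim, namely that for every sincere $\alpha$ with $q_Q(\alpha)\le -1$ a generic representation of dimension $\alpha$ is indecomposable. Take three arrows $1\to 2$, one arrow $2\to 3$, and $\alpha=(2,2,3)$. Then $q_Q(\alpha)=4+4+9-12-6=-1$. Yet in \emph{every} representation the map $V(2)\to V(3)$ has rank at most $2$. Since vertex $3$ is a sink whose only incoming arrow starts at $2$, any complement of the image of this map splits off as a direct sum of copies of the simple $S_3$. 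So there are no indecomposables of dimension $\alpha$ at all. The estimate you quote for the decomposable locus also has the wrong sign. The representations isomorphic to some $V'\oplus V''$ with $\dim V'=\beta$, $\dim V''=\gamma$ form the image of $\GL_\alpha\times^{\GL_\beta\times\GL_\gamma}(\Rep_\beta(Q)\times\Rep_\gamma(Q))$. This only bounds the dimension by $\dim\Rep_\alpha(Q)+\langle\beta,\gamma\rangle+\langle\gamma,\beta\rangle$ (Euler form), and for $\beta=(2,2,2)$, $\gamma=(0,0,1)$ this exceeds $\dim\Rep_\alpha(Q)$ by $2$.

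The repair is to count all isomorphism classes instead of indecomposables, and this is precisely where passing to $k\alpha$ is needed. Suppose $Q$ were tame, with algebraically parametrized families. A representation of dimension $k\alpha$ is a direct sum of at most $N:=k\sum_i\alpha(i)$ indecomposables, each drawn from finitely many one-parameter families or finitely many isolated classes. Hence $\Rep_{k\alpha}(Q)$ is a finite union of sets $\GL_{k\alpha}\cdot\phi_j(\bC^{N_j})$ with $N_j\le N$. Since scalars fix every point, $\dim\Rep_{k\alpha}(Q)\le \dim\GL_{k\alpha}-1+N$, i.e.\ $-k^2q_Q(\alpha)\le k\sum_i\alpha(i)-1$, which fails for $k\ge\sum_i\alpha(i)$. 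No sincerity or indecomposability is needed.

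There are also three smaller points. First, in the combinatorial step there need not be a vertex outside $Q'$ (e.g.\ three arrows between two vertices); then $\alpha=\delta_{Q'}$ itself has $q_Q(\alpha)<0$. Second, in the tameness step, orientation is not harmless for $\hat{A}_n$. Reflection functors preserve the numbers of arrows running each way around the cycle (these are the ranks of the exceptional tubes), and they never reach the oriented cycle. The oriented cycle has no sink or source, so it must be treated on its own: finitely many nilpotent uniserial indecomposables per dimension vector, plus Jordan blocks with eigenvalue in $\bC^\times$ for invertible monodromy. Third, the exceptional tubes for $\hat{D}_n$ and $\hat{E}_{6,7,8}$ carry most of the weight of the classical proof but are only gestured at.
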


We state here a structural result on the dimension vectors of indecomposable representations of type $A$ quivers, which we will use later. This follows from the characterization of indecomposable representations of quivers of finite type which can be found in \cite{Gabriel1972} or any standard text on quiver representations. 

\begin{lemma}
\label{indecomposables_of_AD}
    Let $Q$ be a quiver of type $A$, then any indecomposable representation $V$ has dimension vector with entries bounded by $1$, i.e., $\dim(V)_{\max} = 1$.
\end{lemma}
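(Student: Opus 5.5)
The plan is to deduce the lemma from Gabriel's Theorem \ref{gabriels_theorem} together with its refinement: for a Dynkin quiver, taking the dimension vector gives a bijection between isoclasses of indecomposable representations and the positive roots of the associated root system. Equivalently, these are the vectors $\alpha\in\bN^{Q_0}$ with Tits form $q(\alpha)=\sum_{i\in Q_0}\alpha(i)^2-\sum_{a\in Q_1}\alpha(ta)\alpha(ha)$ equal to $1$. So it suffices to show that every positive root of $A_n$ has all entries in $\{0,1\}$, and that at least one entry is $1$.

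For $A_n$ the underlying graph is a path $1-2-\cdots-n$, and the orientation does not affect $q$. Hence
\[
q(\alpha)=\sum_{i=1}^n\alpha(i)^2-\sum_{i=1}^{n-1}\alpha(i)\alpha(i+1)=\tfrac12\Big(\alpha(1)^2+\alpha(n)^2+\sum_{i=1}^{n-1}(\alpha(i)-\alpha(i+1))^2\Big).
\]
Setting $q(\alpha)=1$ means this sum of squares equals $2$. Extend $\alpha$ by $\alpha(0)=\alpha(n+1)=0$. The expression then says that the integer sequence $\alpha(0),\dots,\alpha(n+1)$ changes exactly twice, each time by $\pm1$. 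A sequence that starts and ends at $0$, is nonnegative, is nonzero, and has only two unit jumps must go up once and down once. So $\alpha$ is the indicator vector of an interval $[i,j]$, and $\max_i\alpha(i)=1$.

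Alternatively, one can argue directly without roots, as a fallback. Take an indecomposable $V$ and choose a vertex $i$. Using the standard splitting argument, any nonzero vector propagated along the path yields a subrepresentation supported on an interval with one-dimensional spaces. This interval module is a direct summand: along a path, images and kernels split compatibly, since there are no cycles or branching. Hence $V$ equals this interval module.

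The only real obstacle is deciding how much of Gabriel's classification to cite. I would simply cite the root correspondence, as the paper indicates, so that the lemma reduces to the short sum-of-squares computation above.
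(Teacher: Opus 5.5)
Your main argument is correct and is essentially the paper's: the paper only cites Gabriel's classification of indecomposable representations of Dynkin quivers, and your Tits-form computation, showing that the positive roots of $A_n$ are exactly indicator vectors of intervals, makes that citation explicit. The fallback sketch should be dropped or repaired, because a subrepresentation generated by an arbitrary nonzero vector need not be a direct summand. For $1\to 2$ with both spaces equal to $\bC$ and the identity map, a vector at vertex $2$ generates the simple representation at vertex $2$, which is not a summand of this indecomposable, so that route would need a careful choice of vector.
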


For a fixed dimension vector $\alpha: Q_0 \to \bN$, we consider the \emph{representation space} of $Q$ with dimension $\alpha$
\[ \Rep_\alpha(Q) := \prod_{a\in Q_1} \bC^{\alpha(ha)\times \alpha(ta)}.\]
This representation space is endowed with a natural group action of $ \GL_\alpha:= \prod_{v\in Q_0} \GL_{\alpha(v)}$ given by
\[ g \cdot X := (g_{ha} X_a g_{ta}^{-1})_{a\in Q_1}. \]
We endow this space with the standard inner product and its induced norm
\[ \langle X, Y\rangle = \sum_{a \in Q_1} \tr(X_a^* Y_a), \quad \|X\|^2 = \sum_{a \in Q_1} \|X_a\|_F^2.\]
The Frobenius norm is the only matrix norm we will use throughout this paper, so we will omit the subscript $F$ from now on. After choosing bases, any representation $V$ of $Q$ can be identified with a point $X\in \Rep_{\dim(V)}(Q)$ and two representations $V,W$ are isomorphic if and only if the corresponding points in the representation space lie in the same orbit. Thus isomorphism classes of $\alpha$ dimensional representations are in one to one correspondence with orbits in the representation space $\Rep_\alpha(Q)$. 
We will usually work with the restriction of the above group action to $\SL_\alpha := \prod_{v\in Q_0} \SL_{\alpha(v)}$ and we denote the gap of this action as $\gamma_{\SL}(Q,\alpha)$, the weight margin as $\gamma_{\ST}(Q,\alpha)$ the null cone as $\NC(Q, \alpha)$.\\

The weight margin of the $\GL_\alpha$ action on $\Rep_\alpha(Q)$ has been discussed in \cite{Burgisser_2019} and they also found an exponential lower bound on the weight margin of the $\SL_\alpha$ action. We restate their theorem in the language of this paper.

\begin{theorem} [Theorem 6.21 in \cite{Burgisser_2019} (partially stated)]
\label{bürgisser_weightmargin_bound}
    Let $Q$ be a quiver with $n$ vertices and $\alpha$ a dimension vector. Then the weight margin of the $\GL_\alpha$ and $\SL_\alpha$ actions on $\Rep_\alpha(Q)$ are lower bounded by
    \begin{align*}
        \gamma_T(Q,\alpha) &\geq n^{-\frac{3}{2}}\\
        \gamma_{\ST}(Q, \alpha) &\geq P^{-n} (M+1)^{-n} M^{-\frac{3}{2}}, 
    \end{align*}
    where $P:= \prod_{k=1}^n \alpha(k)$ and $M:= \sum_{k=1}^n \alpha(k)$.
\end{theorem}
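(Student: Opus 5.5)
The plan is to work purely with the weight characterization
\[ \gamma_T(\pi) = \min \{\dist(\conv(A), 0) \; |\; A \subseteq \Omega(\pi),\; 0 \not\in \conv(A)\} \]
and to reduce both bounds to a Cramer's-rule estimate for the minimum-norm point of a convex hull of integer vectors.

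First I make the weights explicit. For $\GL_\alpha$ with the product of diagonal tori, $\Omega$ consists of the vectors $e_{ha,i} - e_{ta,j} \in \bZ^{M}$, for $a \in Q_1$, $1\le i\le \alpha(ha)$ and $1\le j\le \alpha(ta)$. Here the coordinates are indexed by pairs $(v,k)$ with $k\le \alpha(v)$, so $M$ coordinates in total. Thus $\Omega$ is a set of columns of the incidence matrix of a directed graph, and these columns form a totally unimodular matrix. For $\SL_\alpha$, the Lie algebra of $\ST_\alpha$ is the traceless subspace. The weights are therefore the orthogonal projections of the above vectors onto $\{x : \sum_k x_{v,k} = 0 \ \forall v\}$, which replaces $e_{v,k}$ by $e_{v,k} - \alpha(v)^{-1}\mathbf 1_v$. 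Multiplying by $P$ (a common multiple of all $\alpha(v)$) makes every $\SL$-weight an integer vector with entries bounded by $P$.

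The general step is as follows. Let $A$ be a set of weights with $0\notin\conv(A)$, and let $p$ be the point of $\conv(A)$ closest to $0$. Then $p$ lies in the relative interior of a face spanned by an affinely independent subset $w_1,\dots,w_r \in A$, and $p=\sum \lambda_i w_i$ with $\sum\lambda_i = 1$ and $\langle p, w_i\rangle = \|p\|^2$ for all $i$. Equivalently, $p$ is the minimum-norm point of the affine hull of $w_1,\dots,w_r$.

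Writing $W=(w_1,\dots,w_r)$, the vector $\lambda$ solves the linear system
\[ \begin{pmatrix} W^TW & \mathbf 1\\ \mathbf 1^T & 0\end{pmatrix}\begin{pmatrix}\lambda\\ -\|p\|^2\end{pmatrix} = \begin{pmatrix}0\\1\end{pmatrix}. \]
By Cramer's rule, $\|p\|^2$ is a ratio of two integer determinants, so $\|p\|^2 \geq 1/|D|$, where $D$ is the determinant of the Gram-type bordered matrix. I then bound $|D|$ differently in the two cases.

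In the $\GL$ case I use total unimodularity. I rewrite $D$ via Cauchy–Binet as a sum of squared maximal minors of $\begin{pmatrix}W\\ \mathbf 1^T\end{pmatrix}$, after subtracting one column from the others. This turns the affine problem into a linear one with a TU-like matrix, so each minor is in $\{0,\pm1\}$. The number of contributing minors is then controlled polynomially, which gives $\|p\| \geq n^{-3/2}$. Note that the bound is stated in terms of the number of quiver vertices $n$, not $M$. I expect to obtain this by first reducing to a face supported on a spanning forest of the underlying graph, since a minimal face has at most one weight per edge of such a forest. I then identify coordinates within each vertex block, using the symmetry of the weight set under the Weyl group $\prod_v \Sym_{\alpha(v)}$, so that only $n$ effective coordinates appear.

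In the $\SL$ case I apply the same Cramer argument to the scaled integer vectors $Pw_i$. For $D$ I use Hadamard's inequality on the rows. Each projected weight has norm at most $\sqrt{2}$ before scaling, and after scaling the rank is at most $M$. This should yield a bound on the denominator of the shape $P^{2n}(M+1)^{2n}M^{3}$, again using that the relevant dimension after the forest reduction is governed by $n$ rather than $M$. Taking square roots gives $P^{-n}(M+1)^{-n}M^{-3/2}$.

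The main obstacle is exactly this reduction from the ambient dimension $M$ to exponents involving only $n$, in both bounds. Naively the face can have dimension up to $M-1$, which would give exponents in $M$. I would first try to show that the minimum-norm point can be symmetrized by the Weyl group action (averaging preserves the distance by convexity and uniqueness of the nearest point). After that, the weights can be collapsed to vectors in $\bR^{n}$, weighted by $\alpha$, and the Hadamard estimate is applied in dimension at most $n$.
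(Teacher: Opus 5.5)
The paper does not prove this statement; it only quotes \cite{Burgisser_2019}. Your plan has a genuine gap, and for the $\GL_\alpha$ bound the target itself cannot be reached. With $n$ the number of vertices, $\gamma_T(Q,\alpha)\geq n^{-3/2}$ is false. Take a single arrow $1\to 2$, $\alpha=(d,d)$, and $A=\{e_{2,i}-e_{1,i} : i\in[d]\}$. Every point of $\conv(A)$ has block-$2$ coordinate sum $1$, so $0\notin\conv(A)$. But the barycenter $\frac1d\sum_i(e_{2,i}-e_{1,i})$ has norm $\sqrt{2/d}<2^{-3/2}$ once $d>16$.

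So the reduction from $M$ to $n$ that you identify as the main obstacle cannot exist, and both tools you propose for it already fail on this example. First, the nearest point is the barycenter, so the supporting face uses $d$ linearly independent weights all lying over one arrow. Your forest therefore lives on the $M$ torus coordinates, not on the quiver. Second, a general $A\subseteq\Omega$ is not stable under $\prod_v\Sym_{\alpha(v)}$, so averaging its nearest point over the Weyl group leaves $\conv(A)$. And for Weyl-stable $A$ (here $A=\Omega$ gives the same value), the collapsed problem carries the metric $\sum_v\alpha(v)c_v^2$, which brings $\alpha$ straight back. The correct first inequality is $\gamma_T(Q,\alpha)\geq M^{-3/2}$ with $M=\sum_v\alpha(v)$. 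This is also the form the paper actually uses in Section~5.

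Even for $M^{-3/2}$, your Cramer step does not work as set up. The inequality $\|p\|^2\geq 1/|D|$ is true, since $\|p\|^2=\det(W^TW)/|D|$, but discarding the numerator is exponentially lossy. In the example, $W^TW=2I_d$ and $|D|=d\,2^{d-1}$, whereas $\|p\|^2=2/d$. Correspondingly, $\begin{pmatrix}W\\ \mathbf{1}^T\end{pmatrix}$ is not totally unimodular: for instance, columns $e_1-e_2$ and $e_3-e_1$ restricted to row $1$ and the ones row give a minor $2$. In the example it also has $d\,2^{d-1}$ nonzero maximal minors, so the count is not polynomial.

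The standard route bounds a dual certificate instead of the primal point. If $0\notin\conv(A)$, form the digraph on the $M$ torus coordinates with an edge $(ta,j)\to(ha,i)$ for every weight in $A$. It is acyclic, since a directed cycle averages to $0$. Take ranks in a topological order and centre them, which is allowed because all weights are orthogonal to $\mathbf{1}$. This gives $y$ with $\langle y,w\rangle\geq 1$ on $A$ and $\|y\|^2=M(M^2-1)/12$, hence $\dist(0,\conv A)\geq\|y\|^{-1}\geq M^{-3/2}$.

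For $\SL_\alpha$, your Hadamard estimate runs over up to $M-n+1$ weights in dimension $M-n$, so it produces exponents of order $M$, and the forest/Weyl reduction cannot lower them. The dependence on $n$ and $P$ should instead come from the $n$ trace constraints $\langle z,\mathbf{1}_v\rangle=0$ on a certificate $z$ with $\langle z,e_{ha,i}-e_{ta,j}\rangle\geq 1$ on $A$. Append the rows $\mathbf{1}_v^T$ to the totally unimodular matrix with rows $e_{ha,i}-e_{ta,j}$; a Laplace expansion along these $n$ rows bounds every subdeterminant by $P$. Cramer's rule on a minimal face of the certificate polyhedron then gives $\|z\|\leq PM^{3/2}$, which already implies the stated $\SL_\alpha$ bound.
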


In \cite[Theorem 4.25]{FR21} Franks and Reichenbach constructed a quiver $Q$ with $n$ vertices, $n(n-1)$ arrows and uniform dimension vector $\alpha = (d, \ldots, d)$ such that the gap is exponentially small in the number of vertices
\[ \gamma_{\SL}(Q, \alpha) \leq (d-1)^{-n+1}. \]
Remarkably, the unstable representation they constructed is isomorphic (up to trivial summands) to the representation Derksen and Makam used to prove an exponential degree lower bound for generators of the ring of semiinvariants of quivers \cite[Proposition 1.5]{derksen2016degreeboundssemiinvariantrings}.

Franks and Reichenbach also posed the question, whether polynomial bounds for the gap hold, when the quiver is of finite representation type. We partially answer this question in the affirmative. Our main result is that the inverse of the gap is polynomial in the number of vertices and the maximum entry of the dimension vector $\alpha_{\max}$ for the $\SL_\alpha$ action on $\Rep_\alpha(Q)$, when $Q$ is of type $A$ or $\hat{A}$.

\begin{theorem}[\Cref{mainresult} simplified]
\label{main_simplified}
    Let $Q$ be a quiver of type $A_{n}$ or $\hat{A}_n$ with dimension vector $\alpha$. Then the gap of the $\SL_d$ representation $\Rep_d(Q)$ is lower bounded by 
    \[ \gamma_{\SL}(Q, \alpha) \geq \poly(n^{-1}, \alpha_{\max}^{-1}). \]
\end{theorem}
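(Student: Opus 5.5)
Given an unstable representation $X \in \NC(Q,\alpha)\setminus\{0\}$, I want a lower bound on $\|\mu(X)\|$. The first step is to write the moment map explicitly. For the $\SL_\alpha$ action, $\mu(X)$ has components
\[ \mu(X)_v = \frac{1}{\|X\|^2}\Big(\sum_{ha=v} X_aX_a^* - \sum_{ta=v} X_a^*X_a\Big) - c_v I, \]
where $c_v$ is the scalar that makes the component traceless. The task is then to show that this block-tuple cannot be small whenever $X$ is unstable.

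The second step is a reduction to a witness via the Hilbert--Mumford / King criterion. Since $X$ is unstable for $\SL_\alpha$, there is a one-parameter subgroup, equivalently a filtration by subrepresentations, witnessing instability. Concretely, King's criterion gives a $\theta$-destabilizing subrepresentation $W\subseteq X$ for every weight $\theta$ in the relevant cone. Pairing the moment map with the projection onto $W$ (a Hermitian element $H$ with $\tr(H_v)=0$ after centering) gives
\[ \|\mu(X)\|\,\|H\| \;\geq\; \tr(\mu(X)H) \;=\; \frac{\sum_a \|(I-P_{ha})X_aP_{ta}\|^2 \cdot(\text{sign})}{\|X\|^2} + \text{(dimension term)}. \]
The dimension term involves $\dim W(v)/\alpha(v)$. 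I would like to arrange things so that it equals a nonzero rational number with denominator at most $\prod$ over a bounded number of $\alpha(v)$, while the arrow term is nonnegative. This gives $\|\mu(X)\| \geq (\text{dimension defect})/\|H\|$.

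The third step uses the type-$A$ structure to control denominators. It is natural to pass to the closure: replace $X$ by a point in the limit of the destabilizing one-parameter subgroup, or argue directly that $\|\mu\|$ is minimized over $\overline{G\cdot X}$ near a direct sum of indecomposables. By \Cref{indecomposables_of_AD}, every indecomposable for type $A$ has dimension vector with entries in $\{0,1\}$ supported on an interval. For $\hat A_n$, the indecomposables are string or band modules, whose dimension vectors are again controlled by intervals or by constant vectors. The plan is to show that the minimizing unstable configuration is, up to unitary change, block diagonal with indecomposable summands. The moment map then decomposes into a sum of contributions from thin (interval) summands. For these, the $\SL$-moment map reduces to a computation on a path with scalars, which is a small real minimization problem: minimize $\sum_v \sum_i (\lambda_{v,i} - \bar\lambda_v)^2$ subject to combinatorial constraints coming from the interval supports, where the constraints force some vertex to be unbalanced. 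I would solve this real problem with an explicit Lagrangian or averaging argument along the path, obtaining a bound like $1/(n^{c}\alpha_{\max}^{c})$.

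The main obstacle is the third step: justifying that one may reduce to the direct-sum-of-thin-indecomposables situation, without losing more than polynomial factors, and then bounding the real optimization uniformly. The gap is an infimum over all unstable points, not just over closed-orbit representatives, so I would first try a Kempf-type argument. Moving $X$ along the optimal destabilizing one-parameter subgroup does not increase $\|\mu\|$ in the limit, by convexity of the Kempf--Ness function along geodesics. The limit is graded by the filtration and hence, via the type-$A$ decomposition, splits into interval summands. The resulting real program along a path or cycle should have a polynomial bound because differences of averages over intervals telescope. A crude estimate of the form $\|\mu\| \geq 1/(n\,\alpha_{\max}^2)$ times a square-root factor seems achievable.
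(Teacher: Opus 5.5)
Your Step 2 fails for the $\SL_\alpha$ moment map. Let $W\subseteq X$ be a subrepresentation with orthogonal projections $P_v$ onto $W(v)$ and $\beta=\dim W$. Let $F(v)=\sum_{ha=v}\|X_a\|^2-\sum_{ta=v}\|X_a\|^2$ be the flow. Then
\[
\|X\|^2\,\tr\big(\mu_{\SL}(X)P\big)=\sum_{a\in Q_1}\|P_{ha}X_a(I-P_{ta})\|^2-\sum_{v\in Q_0}\frac{\beta(v)}{\alpha(v)}F(v).
\]
The nonnegative arrow term involves $P_{ha}X_a(I-P_{ta})$; the term $(I-P_{ha})X_aP_{ta}$ you wrote vanishes for a subrepresentation.

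The traceless projection couples $\beta/\alpha$ to the flow. So your ``dimension term'' is a continuous, sign-indefinite function of $X$, not a nonzero rational constant. Already for $1\to 2$ with $\alpha=(2,2)$, $X_1=E_{11}$ and $\theta=(-1,1)$, the King witness $W=(0,\bC e_2)$ gives $\tr(\mu_{\SL}(X)P)=-\tfrac12$.

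The pairing trick only works when a character supplies a fixed integer, as in the proof of \Cref{mainresult_sigma_semistability}, where $(X,1)$ contributes $\sigma(\beta)\geq 1$. For $\SL_\alpha$ you would instead need a quantitative lower bound on $\max_W\theta_X(\dim W)/\|\theta_X\|$ for the $X$-dependent real weight $\theta_X=-F/\alpha$. That is essentially the whole problem.

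Step 3, which you rightly call the real work, is a program rather than an argument. Convexity of the Kempf--Ness function makes $t\mapsto\tr(\mu(e^{tH}X)H)$ monotone, not $\|\mu\|$. The correct reduction is to critical points of $\|\mu\|^2$ in the orbit closure. There $\Pi(\mu(X))X=\|\mu(X)\|^2X$ grades $X$ by the eigenspaces of $\mu(X)$. You would still need polystability of the graded blocks to get an \emph{orthogonal} splitting into thin summands.

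More seriously, even granting this reduction, $X$ is then monomial in suitable orthonormal bases, so $\|\mu_{\SL}(X)\|=\|\mu_{\ST}(X)\|$. By \Cref{weight_margin_connected quivers}, $\gamma_{\ST}(Q,\alpha)$ is exponentially small for suitable $\alpha$ even in type $A$. So your final real program cannot be settled by a generic telescoping of interval averages. It must exploit that each $X_a$ has at most one nonzero entry per row and column, together with instability. That is exactly the missing core.

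The paper never reduces to special points. For an arbitrary unstable $X$ it:
\begin{itemize}
\item bounds each $\|\mu_i(X)\|^2$ below by spectral-gap differences plus principal-angle cross terms (\Cref{lem:difference_of_hermitians_bound});
\item organizes these along the path and cycle components of the graph $G$;
\item applies King's criterion to a component-specific weight $\tau$ (sources minus sinks), so that \Cref{lem:HeadTail_inequality} and \Cref{lem:3_subspace_inequality} telescope into a polynomial lower bound on $\sum\lambda_{i,k}$.
\end{itemize}
This is what replaces the fixed dimension defect your Step 2 assumes.
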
 

This implies the following theorem.
\begin{theorem}
\label{mainresult_finite_type}
    Let $Q$ be a quiver of type $A_{n}$ or $\hat{A}_n$ and $\alpha \in \bN^{Q_0}$ a dimension vector. Let $X \in \Rep_\alpha(Q)$ be a representation whose entries are gaussian rationals with bit length bounded by $B$. Then Algorithm 4.2 in \cite{Burgisser_2019}, solves the nullcone membership problem for $X$ under the $\SL_\alpha$ action on $\Rep_\alpha(Q)$ in time $\mathcal{O}(\poly(\alpha_{\max}, n, B))$, independent of the orientation of $Q$.
\begin{proof}
    After multiplying $X$ with the least common multiple of the denominators of its entries, we can assume that $X$ has entries which are gaussian integers of absolute value bounded by $2^{\alpha_{\max}^2n B}$. We specify \Cref{theorem_runtime8.4} to the group action of $\SL_\alpha = \SL_{\alpha(1)} \times \ldots \times \SL_{\alpha(n)}$ on $\Rep_\alpha(Q)$. The degree of this action is bounded by $\alpha_{\max}$ and the absolute value of the coefficients are bounded by $1$. Thus \Cref{theorem_runtime8.4} yields the result in time
    \[ \poly(n, \sum_k \alpha(k), \alpha_{\max}, \log(n\alpha_{\max}^2 2^{\alpha_{\max}^2n B} ), \gamma_{\SL}(Q, \alpha)^{-1}) \leq \poly(n, \alpha_{\max}, B), \]
    where we used $\gamma_{\SL}(Q, \alpha)^{-1} \leq \poly(n, \alpha_{\max})$ from \Cref{main_simplified}.
\end{proof}
\end{theorem}

\subsection{Our contribution, outline of the paper and open questions}

Theorem 8.2 in \cite{Burgisser_2019} implies, that Algorithm 4.2. in \cite{Burgisser_2019} solves the nullcone membership problem in time polynomial in the input size and the inverse of the gap of the representation. In section 3 we show that the inverse of the gap $\gamma_{\SL}(Q,\alpha)^{-1}$ is polynomial in $d$ and the number of vertices of $Q$ when $Q$ is a tree quiver and $\alpha=(d,\ldots, d)$ is a uniform dimension vector.

\begin{theorem}[\Cref{mainresult_treequivers} restated]
    Let $Q$ be a tree quiver with $n+1$ vertices and uniform dimension vector $\alpha = (d,\ldots, d)$. Then the gap of the $\SL_\alpha$ representation $\Rep_\alpha(Q)$ is lower bounded by 
    \[ \gamma_{\SL}(Q, \alpha) \geq \sqrt{\frac{6}{d(d-1) n(n+1)(2n+1)}} \geq \Omega(n^{-1.5}d^{-1}). \]
\end{theorem}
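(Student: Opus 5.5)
The plan is to prove the bound through the dual description of the gap by destabilizing one-parameter subgroups. The quantity $\frac{n(n+1)(2n+1)}{6}=\sum_{k=1}^n k^2$ suggests a certificate $H$ whose block at a vertex at "level" $k$ has size proportional to $k$. The factor $d(d-1)$ is exactly $\|\,d\,e_1e_1^*-I_d\|^2$, which suggests blocks that are rank-one perturbations of the identity.

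\textbf{Step 1 (reduction to a certificate).} Let $X\in\NC(Q,\alpha)\setminus\{0\}$. By Hilbert--Mumford and Kempf, there is an optimal one-parameter subgroup $\lambda$ of the maximal torus $\ST_\alpha$, with Hermitian generator $H=(H_v)_{v\in Q_0}$ and each $H_v$ traceless diagonal, such that some $g\in\SL_\alpha$ moves $X$ into the span $V_+$ of the $\lambda$-weight spaces with positive weight. The parabolic $P(\lambda)$ preserves $V_+$, and $\SL_\alpha=K\,P(\lambda)$. Hence we may take $g=k$ to be unitary. Since $\|\mu\|$ is $K$-invariant, we may replace $X$ by $k\cdot X\in V_+$. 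Then $\Pi(H)$ has eigenvalues at least $m>0$ on $V_+$, so
\[
\tr(\mu(X)H)=\frac{\langle X,\Pi(H)X\rangle}{\|X\|^2}\ \ge\ m ,
\qquad\text{hence}\qquad
\|\mu(X)\|\ \ge\ \frac{m}{\|H\|}.
\]
So it suffices to show that, for every unstable $X$, some $H$ in the torus Lie algebra and some unitary $k$ with $k\cdot X\in V_+(H)$ give $m/\|H\|\ge \big(d(d-1)\sum_{k\le n}k^2\big)^{-1/2}$.

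\textbf{Step 2 (combinatorial description of instability on a tree with uniform $d$).} The weights of arrow $a$ are $h_{ha,i}-h_{ta,j}$. I would pick orthonormal bases adapted to the kernels and images of the maps $X_a$; this is the unitary $k$. The goal is to show that instability forces a vertex $r$ and a vector $u\in X(r)$ whose propagation along the tree fails to be "balanced". Concretely, I expect a sequence of one-dimensional subspaces (or codimension-one subspaces) along a path, or a subtree, that the $X_a$ map consistently into kernels and images. This should yield a subrepresentation or a quotient $W$ whose dimension vector $\beta$ is not proportional to $\alpha$ in the direction King's criterion requires.

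\textbf{Step 3 (explicit $H$).} Root the tree at a suitable vertex and assign each vertex $v$ an integer level $k_v$ with $|k_v|\le n$ and $|k_{ha}-k_{ta}|\le 1$, decreasing away from the destabilizing part. Set $H_v=\pm k_v\,(d\,P_v-I)$, where $P_v$ is the rank-one projection onto the distinguished line at $v$ (or $I-P_v$ in the quotient case). Every $H_v$ is traceless with $\|H_v\|^2=k_v^2\,d(d-1)$. Since a tree has at most one vertex at each level along a path, ordering levels gives
\[
\|H\|^2\ \le\ d(d-1)\sum_{k=1}^n k^2 .
\]
It remains to check that every matrix entry of $k\cdot X$ on which $X$ is supported has weight at least $1$ (so $m\ge 1$). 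This holds because support is allowed only from the distinguished lines into the distinguished lines, or from the complements into anything at the appropriate level shift. Combining with Step 1 gives the claimed bound.

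\textbf{Main obstacle.} I expect Step 2 together with the weight check in Step 3 to be the hard part. One must show that a single rank-one flag per vertex suffices on trees with uniform $d$, so that no higher-rank subrepresentation forces larger eigenvalue spreads. One must also show that the level function can be chosen with increments at most $1$. I would first attempt the case of star quivers, where the tree structure means there are no cycles of constraints, and then induct on leaves. If rank-one flags do not suffice, the fallback is the real minimization problem over eigenvalue vectors treated in the appendix: solve $\min\|H\|$ subject to the weight constraints being $\ge 1$, and bound its optimum by $\sum_k k^2$ times $d(d-1)$.
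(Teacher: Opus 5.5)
\textbf{There is a genuine gap in Steps~2--3.}

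Your Step~1 is sound. Via $\mathrm{SL}_\alpha=K\,P(\lambda)$ the conjugating element can be taken unitary, and $\|\mu(X)\|\,\|H\|\ge\mathrm{tr}(\mu(X)H)\ge m$ is correct. By Ness's theorem, a certificate of value $\inf_{g\in\mathrm{SL}_\alpha}\|\mu(g\cdot X)\|$ even exists.

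The entire bound, however, rests on Steps~2--3. Step~2 is only an expectation, and the explicit certificate of Step~3 cannot work once the orientation has interior sinks or sources. With $H_w=\pm k_w(dP_w-I)$, the spectrum of $H_w$ is $\pm(d-1)k_w$ on the line $P_w$ and $\mp k_w$ on its complement. One line can be adapted to the kernel (or cokernel) of only one arrow at $w$.

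Take a generic unstable $X$, with each $X_a$ of rank $d-1$ and kernels and cokernels in general position. Let $w$ be a source with path arrows $a\colon w\to p$ towards the root and $b\colon w\to v$ away from it. Take $k_w>0$ and $P_w=\ker X_a$, the choice that makes $a$ cheap. Since $X_b(\ker X_a)\neq 0$, the column $\ker X_a$, which carries eigenvalue $(d-1)k_w$, lies in the support of $X_b$. So some row it hits must carry an eigenvalue of $H_v$ of at least $(d-1)k_w+1$, which forces $|k_v|\ge (d-1)k_w+1$. Passing $P_v$ through $X_b(\ker X_a)$ instead fails on the $(d-1)$-dimensional image $X_b((\ker X_a)^\perp)$. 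Other choices of signs and lines only push the same blow-up further along the path, and sinks behave dually. Hence for $d\ge 3$ your increments $|k_{ha}-k_{ta}|\le 1$ are impossible.

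Consider an alternating path $A_{n+1}$, where every interior vertex is a sink or a source. There the levels of any certificate of your rank-one form must grow geometrically and reach $(d-1)^{\Omega(n)}$, whatever root, signs and lines you pick. Such a certificate therefore proves only an exponentially small bound, whereas the statement is polynomial and independent of orientation. Already for $1\to 2\leftarrow 3\to 4$ and generic $X$, every rank-one certificate with all weights $\ge 1$ has $\sum_v k_v^2\ge \frac{1+(d+1)^2}{2}$. This exceeds $14=\sum_{k\le 3}k^2$ once $d\ge 5$.

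So the optimal destabilizing subgroup genuinely needs more than one distinguished line per vertex. The proposal neither constructs such an $H$ nor bounds its norm, and the suggested fallback just restates the problem.

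A smaller error: a tree can have many vertices on one level (e.g.\ a star rooted at its centre), so your justification of $\|H\|^2\le d(d-1)\sum_{k\le n}k^2$ is wrong as stated. It can be repaired when $|k_v|\le\mathrm{dist}(v,r)$, since the $i$-th smallest distance to the root is at most $i$.

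For comparison, the paper avoids destabilizing subgroups altogether. It bounds $\|\mu(X)\|$ directly through $y_a=\|p(X_a^*X_a)\|=\|p(X_aX_a^*)\|$, which does not see orientation:
\begin{itemize}
\item $\det X_a=0$ gives $\|X_a\|^2\le\sqrt{d(d-1)}\,y_a$;
\item rooting the tree at a leaf, the reverse triangle inequality gives $\|\mu_v(X)\|\ge(y_{o(v)}-\sum_{a\ne o(v)}y_a)_+$, the sum running over the other arrows at $v$;
\item sliding moves reduce to the equioriented path;
\item Cauchy--Schwarz yields $1/\sum_{k\le n}k^2$.
\end{itemize}
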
 

In section 4 we show that the inverse of the gap $\gamma_{\SL}(Q,\alpha)^{-1}$ is polynomial in $\alpha_{\max}$ and the number of vertices of $Q$, when $Q$ is a quiver of type $A$ or a quiver of type $\hat{A}$.
\begin{theorem} [\Cref{mainresult} simplified]
    Let $Q$ be a quiver of type $A$ or $\hat{A}$ and $\alpha$ a dimension vector, then the gap of the $\SL_\alpha$ action on $\Rep_\alpha(Q)$ is lower bounded by
    \[ \gamma_{\SL}(Q,\alpha) \geq \alpha_{\max}^{-3.5} n^{-1.5}. \]
\end{theorem}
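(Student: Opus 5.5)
We need to show that for $Q$ of type $A$ or $\hat{A}$ with $n$ vertices, every nonzero unstable $X\in\Rep_\alpha(Q)$ satisfies $\|\mu(X)\|\geq \alpha_{\max}^{-3.5}n^{-1.5}$. The plan is to reduce to the moment map of a representation that is "almost" a direct sum of indecomposables with small dimension vectors, and then solve a real minimization problem over such dimension vectors.

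\textbf{Step 1: moment map and reduction to a closed orbit.} For $\SL_\alpha$, the moment map of $X$ is the tuple of traceless parts
\[
\mu(X)_v \;=\; \frac{1}{\|X\|^2}\Big(\sum_{ha=v}X_aX_a^*-\sum_{ta=v}X_a^*X_a\Big)_0 .
\]
The norm $\|\mu\|$ is lower semicontinuous along orbit closures in the appropriate sense: the infimum of $\|\mu\|$ on $\overline{G\cdot X}$ is attained at points with minimal-dimensional orbit for a parabolic degeneration. We therefore use the Hilbert–Mumford/Kempf theory: $X$ is unstable, so there is an optimal one-parameter subgroup $\lambda$. This gives a filtration $0=W_0\subsetneq\cdots\subsetneq W_r=V$ of $X$ by subrepresentations such that the associated graded $\mathrm{gr}X$ lies in $\overline{G\cdot X}$.

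We then use the standard inequality
\[
\|\mu(X)\|\;\geq\;\frac{\langle \mu(X),H\rangle}{\|H\|},
\]
with $H$ the Hermitian generator of $\lambda$. In the Kempf-normalized choice this pairing is bounded below by a weight-type expression depending only on the dimension vectors $\beta_j=\dim(W_j/W_{j-1})$. Concretely, we take $H$ acting on $W_j/W_{j-1}$ as scalar $c_j$ at each vertex, made traceless vertexwise. Then $\langle\mu(X),H\rangle\cdot\|X\|^2=\sum_a\sum_{i<j}(c_j-c_i)\|X_a^{ij}\|^2\geq0$ after a suitable orientation, and the key quantity becomes a convex-geometric distance.

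\textbf{Step 2: reduction to indecomposable pieces.} The subquotients decompose further into indecomposables. By Lemma~\ref{indecomposables_of_AD}, for type $A$ every indecomposable has dimension vector a $0/1$ interval $\mathbb 1_{[i,j]}$. For $\hat A$ the unstable-relevant indecomposables are either string modules with entries $\le1$, or band/regular modules with dimension vectors $k\cdot\mathbb 1$ for the cycle or $0/1$ on a walk; the latter wind around, giving entries bounded by about $\alpha_{\max}$. Thus the gap is bounded below by the quantity
\[
\min\ \operatorname{dist}\Big(0,\ \text{the set of moment values }\big(\tfrac{\beta_j(v)}{\alpha(v)}\text{-type shifts}\big)\Big)
\]
over filtrations whose pieces are sums of such $\beta$. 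Since $\mu$ on a direct sum of pieces equals the convex combination of the block-scalar matrices $\frac{\beta(v)}{\alpha(v)}$-deviations, this becomes a finite-dimensional real minimization problem. This is the problem the appendix (real minimization problems) is presumably for.

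\textbf{Step 3: solving the real minimization.} We minimize $\sum_v\sum_k(\lambda_{v,k}-\bar\lambda_v)^2$ subject to the constraints forced by the arrows of an unstable configuration. Instability means some subrepresentation violates the King-type slope condition for the trivial character, which forces a nonzero integer imbalance of size at least $1/\alpha_{\max}$ (denominators $\alpha(v)\le\alpha_{\max}$) spread along a path of length at most $n$. A Cauchy–Schwarz argument along the path then gives
\[
\|\mu\|^2\;\gtrsim\;\frac{1}{n\cdot(\text{path length})^2\cdot\alpha_{\max}^{O(1)}},
\]
yielding the claimed $\alpha_{\max}^{-3.5}n^{-1.5}$. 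Here the exponent $n^{-1.5}$ arises as $\sqrt{6/(n(n+1)(2n+1))}$, exactly as in the tree case.

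\textbf{Main obstacle.} The hard step is Step 2 for $\hat A$: handling the band modules and their infinite families, and justifying that the limit point minimizing $\|\mu\|$ can be taken to be a direct sum of indecomposables with controlled dimension vectors while only losing polynomial factors. My first attempt would be to pass to the closed $\lambda$-limit, which is semisimple in the graded sense, and argue that each graded piece is polystable for a shifted character, so that its moment map is a block scalar determined by dimension vectors.
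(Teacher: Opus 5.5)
Your proposal has a genuine gap. The quantitative core, Step~3, is asserted rather than derived. The reduction in Steps~1--2, which is supposed to turn the problem into a finite minimization over dimension-vector data, is also not correct as stated.

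\textbf{Steps 1--2.} For an $\SL_\alpha$-unstable $X$, a destabilizing one-parameter subgroup sends $X$ to $0$. Each $X_a$ maps the $\geq c$ part of the filtration into the $>c$ part, so all induced maps on the subquotients $W_j/W_{j-1}$ vanish and $\mathrm{gr}X=0$. What you actually need is Ness's characterization: $\inf_g\|\mu(g\cdot X)\|$ equals the norm of the optimal destabilizing direction $\beta$. It is approached at the lowest-$\beta$-weight component, which is a limit in projective space, not a point of $\overline{G\cdot X}$. That limit does have block-scalar moment map, but the block values are the eigenvalues of the unknown $\beta$ itself. They are coupled by tracelessness at each vertex and by the requirement that every surviving arrow component has the same $\beta$-weight $\|\beta\|^2$. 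They are not determined by the dimension vectors of indecomposable summands.

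Your list of indecomposables for $\hat A$ is also wrong: string modules of an acyclic $\hat A$ quiver wind around the cycle, e.g.\ preprojective Kronecker modules of dimension $(m,m+1)$.

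More fundamentally, a bound obtained as the distance from $0$ of convex hulls of such weight data is a weight-margin bound. \Cref{weight_margin_connected quivers} shows the weight margin is exponentially small for suitable $\alpha$ on every connected quiver, type $A$ included. So a polynomial gap bound must use a strong constraint on which configurations actually come from unstable $X$. Your outline never states or uses such a constraint. Correspondingly, it never bounds $\|H\|$ for a destabilizing $H$ with minimal weight $\geq 1$, which is what $\|\mu(X)\|\geq\langle\mu(X),H\rangle/\|H\|$ would need.

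\textbf{Step 3.} This step is a placeholder. ``Some subrepresentation violates the King-type slope condition for the trivial character'' has no content. For $\SL_\alpha$ one has $\sigma$-instability for every nonzero $\sigma$ (\Cref{sigma_unstable_relation}), and the point is to choose $\sigma$ well. ``A path of length at most $n$'' fails for $\hat A$, where the relevant chains can wind around the cycle up to about $\alpha_{\max}$ times. The exponents $3.5$ and $1.5$ are read off from the statement, not obtained.

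The paper's proof supplies exactly the missing mechanism:
\begin{enumerate}
\item It bounds each $\|\mu_i(X)\|^2$ directly via Lemma~\ref{lem:difference_of_hermitians_bound}. The bound is in terms of consecutive eigenvalue gaps $s_{i,k}$ of the two adjacent arrows and the misalignment $\lambda_{i,k}$ of their eigenflags.
\item It organizes these terms into a graph $G$ whose components are paths or cycles.
\item For each component it chooses a weight $\tau$ with $\tau(\alpha)=0$ (sources $+1$, sinks $-1$, with boundary corrections). King's criterion then yields $Y\subseteq X$ with $\tau(\dim Y)\geq 1$; for the oriented cycle, nilpotency of $X_n\cdots X_1$ plays this role.
\item Lemma~\ref{lem:3_subspace_inequality} and the monotonicity Lemma~\ref{lem:HeadTail_inequality} convert this integer imbalance into $\sum\lambda\geq 8\alpha_{\max}^{-2}|H|^{-1}$.
\item The appendix minimizations then give $f_H\geq 4\alpha_{\max}^{-2}|H|^{-3}$, and hence $\|\mu(X)\|^2\geq\alpha_{\max}^{-7}n^{-3}$ in the acyclic $\hat A$ case.
\end{enumerate}
This eigenflag-misalignment estimate is the idea your approach lacks, and it is what rules out the exponentially small weight configurations.
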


In section 5 we show that $\gamma_{\GL}(Q,\alpha,\sigma)^{-1}$ is polynomial in $\alpha_{\max}$,$|\sigma|_{\max}$ and the number of vertices for any acyclic quiver $Q$. In particular this yields another proof of \cite[Theorem 1.1]{algorithmic_sigma}. We also show, that the dependence on $|\sigma|_{\max}$ is unavoidable.

\begin{theorem} [\Cref{mainresult_sigma_semistability} restated]
    Let $Q$ be an acyclic quiver, $\alpha$ a dimension vector and $\sigma: Q_0 \to \bZ$ a weight. The gap of the $\GL_\alpha$ action on $\Rep_\alpha(Q)\oplus \chi_\sigma$ satisfies
    \[ \gamma_{\GL}(Q,\alpha,\sigma) \geq \sqrt{\frac{4}{\alpha_{\max} n^3 (\langle |\sigma|,\alpha\rangle +2)^2}} \geq \Omega(\alpha_{\max}^{-1.5} n^{-2.5} |\sigma|_{\max}^{-1}).\]
\end{theorem}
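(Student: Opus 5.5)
The plan is to reduce the infimum defining $\gamma_{\GL}(Q,\alpha,\sigma)$ to \emph{critical points} of $\|\mu\|^2$, and then to show that the eigenvalues of $\mu$ at such a point lie on a short arithmetic progression. That turns the gap into a finite-dimensional real minimization problem of the kind treated in the appendix. Throughout, I write $v=(X,z)\in\Rep_\alpha(Q)\oplus\chi_\sigma$ with $\|v\|=1$.

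\textbf{Step 1: moment map.} I write out the moment map. At vertex $i$ it is the Hermitian matrix
\[ \mu(v)_i=\sum_{ha=i}X_aX_a^*-\sum_{ta=i}X_a^*X_a+c\,\sigma_i|z|^2 I_{\alpha(i)}, \]
where $c=\pm1$ is fixed by the sign convention for $\chi_\sigma$.

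\textbf{Step 2: reduction to critical points.} By Kirwan–Ness theory, for $v$ in the nullcone the infimum of $\|\mu\|$ over $\overline{G\cdot v}$ is attained at a nonzero critical point $v_\infty$, and $\|\mu(v)\|\ge\|\mu(v_\infty)\|$. So it suffices to bound $\|H\|$ from below for critical $v$ with $H:=\mu(v)\neq0$. Criticality means $\Pi(H)v=\|H\|^2v$. Put $\lambda:=\|H\|^2$ and decompose each $\bC^{\alpha(i)}=\bigoplus_c E_i(c)$ into eigenspaces of $H_i$. Then:
\begin{itemize}
\item every arrow $X_a$ maps $E_{ta}(c)$ into $E_{ha}(c+\lambda)$;
\item if $z\neq0$, the character coordinate imposes the linear relation $c\sum_i\sigma_i\tr H_i=\lambda$.
\end{itemize}

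\textbf{Step 3: arithmetic progression of eigenvalues.} Since $Q$ is acyclic, any nonzero composite of arrows passes through at most $n$ vertices. I will split the representation $X$ into its "connected" pieces along chains of eigenvalue shifts; on each piece, all eigenvalues lie in $c_0+\lambda\{0,1,\dots,n-1\}$. On a piece with $m_k$ dimensions in level $k$, taking the trace of $\mu=H$ restricted to that piece gives two facts:
\begin{itemize}
\item the arrow terms telescope, and $|z|^2$ contributes with weight $\sum_i\sigma_i\dim E_i$;
\item the equations $\sum_k m_k(c_0+k\lambda)=(\text{$\sigma$-term})|z|^2$ and $\sum_k m_k(c_0+k\lambda)^2=\lambda$ hold, the latter combining into $\|H\|^2=\lambda$.
\end{itemize}
Here $\sum_k m_k\le\alpha_{\max}n$ and the $\sigma$-term is at most $\langle|\sigma|,\alpha\rangle$ in absolute value. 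With $\|v\|=1$, the quantities $|z|^2$ and $\lambda$ are linked by these identities, yielding a problem of the form
\[ \text{minimize } \lambda \quad\text{subject to}\quad \sum_k m_k(c_0+k\lambda)^2=\lambda,\quad \Big|\sum_k m_k(c_0+k\lambda)\Big|\le(\langle|\sigma|,\alpha\rangle+1)\lambda\cdots, \]
together with the requirement that the configuration is genuinely unstable (not all levels balanced).

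\textbf{Step 4: solving the real problem.} Dividing by $\lambda^2$ and setting $x=c_0/\lambda$ gives
\[ \sum_k m_k(x+k)^2=1/\lambda . \]
I will bound the left side from above by its value at the worst admissible $x$. The linear constraint forces $|x|\lesssim\langle|\sigma|,\alpha\rangle+2$ in the relevant scale, and $\sum_k m_k k^2\le\alpha_{\max}\sum_{k<n}k^2\sim\alpha_{\max}n^3/3$. The target is
\[ 1/\lambda\le\tfrac14\,\alpha_{\max}n^3(\langle|\sigma|,\alpha\rangle+2)^2, \]
which gives the stated bound on $\|H\|=\sqrt\lambda$.

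\textbf{Main obstacle.} I expect the main obstacle to be Step 4. The difficulty is making the linear $\sigma$-constraint and the instability condition interact cleanly enough to pin down $x$. In particular, the case $z=0$ must be handled: the problem then reduces to pure $\SL$-type balancing of $X$ against the scalar part. I would first try to treat that case by the same quadratic bound with the $\sigma$-term replaced by $0$, which is why the $+2$ appears.
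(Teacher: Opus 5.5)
Your Step 2 is sound: a minimizer of $\|\mu\|$ over the projectivized nullcone is a critical point, and criticality gives $X_a(E_{ta}(c))\subseteq E_{ha}(c+\lambda)$. Step 3, however, rests on a claim that is false once the underlying graph of $Q$ has a cycle. A piece is held together by zigzags that traverse arrows backwards as well as forwards, while acyclicity only bounds \emph{directed} paths. For trees the level range of a piece is indeed at most $n$, but not in general. Take arrows $a:1\to2$, $b:2\to3$, $e:1\to3$, with $\alpha=(3,2,2)$, $\sigma=(1,0,-1)$, $|z|^2=32$, and
$X_a=\sqrt{29}E_{1,1}+4E_{2,2}$, $X_b=5E_{1,1}+\sqrt{11}E_{2,2}$, $X_e=\sqrt{12}E_{1,2}+\sqrt{27}E_{2,3}$.
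With the moment map of \Cref{moment_map_formula_sigma} one gets $H:=\|v\|^2\mu(v)=(\diag(3,4,5),\diag(4,5),\diag(5,6))$. One checks $\Pi(H)v=v$ and $\|H\|^2=\|v\|^2=152$, so $v$ is a critical point and hence unstable. Its eigenlines form the single chain $E_1(3)\to E_2(4)\to E_3(5)\leftarrow E_1(4)\to E_2(5)\to E_3(6)\leftarrow E_1(5)$, i.e.\ four levels although $n=3$. A priori the number of levels of a piece is bounded only by $\sum_i\alpha(i)$. So the estimate $\sum_k m_kk^2\le\alpha_{\max}\sum_{k<n}k^2$ is unjustified, and Step 4 no longer gives the stated power of $\alpha_{\max}$.

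Step 4 is, moreover, not yet an argument. Dividing the trace identity of a piece $P$ by $\lambda$ gives $\sum_k m_k(x+k)=\sigma(\dim P)|z|^2/\lambda$. So the asserted bound on $|x|$, of order $\langle|\sigma|,\alpha\rangle+2$, presupposes a bound on $|z|^2/\lambda$, which is essentially the claim itself. The one extra piece of information available at a critical point, the $z$-equation $\sum_i\sigma_i\tr H_i=\lambda$, is never used. Also, $\sum_k m_k(c_0+k\lambda)^2=\lambda$ holds for $H$ as a whole, not piece by piece, since different pieces have different $c_0$. The case $z=0$ is only gestured at.

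What is missing is a mechanism that converts instability into a quantitative lower bound. The paper does not pass to critical points at all:
\begin{itemize}
\item For $y=0$ it uses the $\GL_\alpha$ weight-margin bound \cite[Theorem 6.21]{Burgisser_2019}.
\item For $y=1$ it bounds the denominator by the flow lemma, $\sum_a\|X_a\|^2\le\frac n2\sum_v|F(v)|$.
\item When $\sigma(\alpha)\ne0$ or $\sum_v|F(v)|\ge\langle|\sigma|,\alpha\rangle+1$, it bounds the numerator by full traces at every vertex.
\item Otherwise it uses Rayleigh traces over a subrepresentation $Y$ with $\sigma(\dim Y)\ge1$, supplied by King's criterion (\Cref{KC}). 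There the inequality $\tr_{Y(ta)}(X_a^*X_a)\le\tr_{Y(ha)}(X_aX_a^*)$ makes the arrow contributions telescope with the right sign.
\end{itemize}
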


In section 6 we show that for tree quivers the gap becomes exponentially small in the number of leaves for certain dimension vectors. In particular this shows that the results from sections 3 and 4 can not be generalized to arbitrary tree quivers.

\begin{theorem} [\Cref{tree_quivers_leaf_bound} restated]
    Let $Q$ be a tree quiver with $n$ leaves. Then there exists a dimension vector $\alpha$ with $\alpha_{\max} = n+1$, such that \[ \gamma_{\SL}(Q,\alpha) \leq \frac{1}{2^{\lfloor \frac{n+1}{2} \rfloor}-1}.\]
\end{theorem}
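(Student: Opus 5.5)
The plan is to exhibit one explicit unstable representation $X\in\NC(Q,\alpha)\setminus\{0\}$ with $\|\mu(X)\|\le (2^{\lfloor (n+1)/2\rfloor}-1)^{-1}$. Since $\gamma_{\SL}(Q,\alpha)$ is an infimum over the nullcone, this suffices. There are three steps: reduce the tree to a star, build a nearly balanced configuration on the star, and check that it is nonetheless unstable.

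\textbf{Step 1: reduce to a star.} Let $\ell_1,\dots,\ell_n$ be the leaves of $Q$. I would give every non-leaf vertex the same dimension $n+1$, and put a scalar multiple $c_a I_{n+1}$ of the identity on every arrow $a$ between two internal vertices. The internal subtree then behaves like a single vertex $C$ of dimension $n+1$. Concretely, for $v$ internal, the $\SL_{n+1}$-component of the moment map is the traceless part of
\[\sum_{ha=v} X_aX_a^*-\sum_{ta=v}X_a^*X_a.\]
Arrows carrying $c_aI$ contribute only scalars, so they disappear in the traceless part. The conjugation by a common $g\in\SL_{n+1}$ on all internal vertices is an admissible group element, and instability can be tested there. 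Thus, up to the bookkeeping of which vertex each leaf is attached to, the problem becomes a star quiver: a center of dimension $n+1$ and $n$ arms, each of arbitrary orientation.

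\textbf{Step 2: the nearly balanced configuration.} Rank-one leaves alone cannot work. With one-dimensional leaves, the center contribution is the traceless part of a rank $\le n$ positive semidefinite matrix in dimension $n+1$, and its norm is at least of order $1/(n+1)$. This gives only a polynomial bound. So I would pair the leaves into $\lfloor (n+1)/2\rfloor$ pairs (one possibly left over) and give the leaves dimensions in $\{1,\dots,n\}$, so that $\alpha_{\max}=n+1$ is attained only at the center. On the arrow of each leaf I would place a matrix whose singular values form a geometric sequence with ratio $2$.

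The choices are made so that:
\begin{itemize}
\item the traceless leaf moment maps telescope to zero except for a single term of size $2^{-k}$ at each stage;
\item the center receives $\sum_a \pm X_aX_a^*$, which equals a scalar plus a correction of size about $1/(2^{k}-1)$, where $k=\lfloor (n+1)/2\rfloor$.
\end{itemize}
The factor $2^{k}-1$ is the normalization $\sum_{j<k}2^j$ coming from $\|X\|^2$ in the denominator of $\mu$. Minimizing such expressions over the free scalars is exactly the kind of real optimization problem treated in the appendix, and I would use it to tune the scalars $c_a$.

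\textbf{Step 3: instability.} I would certify instability with the Hilbert--Mumford criterion by writing down a one-parameter subgroup $\lambda(t)=(t^{w_v})_{v\in Q_0}$ in $\ST_\alpha$ under which every arrow weight is strictly positive. For the center I would use the weights $w_C=(1,\dots,1,-n)$ adapted to a hyperplane containing all images (respectively, a line in all kernels for outgoing arrows). The leaf weights are then chosen to match.

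I expect this last compatibility to be the main obstacle. The geometric singular values make the moment map small precisely because the configuration is close to semistable. The combinatorics of dimensions must therefore force a proper subspace, such as a common hyperplane or a subrepresentation violating King's slope condition for the $\SL$ weight, while the leaf dimensions still sum appropriately. My first attempt would be to choose the leaf dimensions so that $\sum_i \alpha(\ell_i)$ over incoming arms is exactly one less than a multiple of $n+1$. The rank deficit then produces a destabilizing subrepresentation directly, after which the moment-map computation is routine.
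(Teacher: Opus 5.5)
\textbf{There is a genuine gap, and it sits in your Step~1.} This is exactly the point where passing from a star to a general tree is nontrivial.

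First, instability fails. If an interior arrow carries $c_aI_{n+1}$ with $c_a\neq 0$, then $X\mapsto\det(X_a)$ is an $\SL_\alpha$-invariant, since the arrow is square between two vertices of dimension $n+1$. It does not vanish at your representation, so $X$ is semistable and says nothing about $\gamma_{\SL}(Q,\alpha)$. Consistently, a common conjugation at all internal vertices fixes these arrows, so no one-parameter subgroup of that form can send $X$ to $0$. Taking $c_a=0$ instead simply disconnects the tree.

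Second, even ignoring instability, scalar arrows are annihilated by $p$, so they transport no moment. The moment $\mu_v(X)$ at an internal vertex $v$ is the traceless part of the contributions of the leaf arrows at $v$ alone. The near-cancellation that makes the star's central moment $p(\diag(\sum_k\lambda_ku_k))$ exponentially small only happens for the full sum over all $n$ leaves. A single summand $\lambda_k\diag(u_k)$, with $u_k\in\{0,1\}^{n+1}$ neither zero nor all-ones, has traceless part of norm at least $\lambda_k\sqrt{n/(n+1)}$, and the largest $\lambda_k$ are of constant size. So on a comb-shaped tree, where every internal vertex is adjacent to at most two leaves, the transplanted star configuration gives only a polynomial bound. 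The ``bookkeeping of which vertex each leaf is attached to'' is the real content of the theorem.

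The paper handles both issues with non-scalar real diagonal interior arrows. It peels off a leaf $v$ of the interior subtree. With $M$ the diagonal matrix already accumulated at $v$, it puts $X_a=\sqrt{\pm(M+\alpha_aI)}$ on the remaining arrow at $v$, choosing $\alpha_a$ so that this matrix is semidefinite and singular. This forces $\mu_v(X)=0$ and pushes $M$ on to the neighbour. In the end all interior moments vanish except at one last vertex $u$, where $\mu_u(X)=\mu_0(Y)$. The singularity is what gives instability. The star's diagonal one-parameter subgroup, acting by $M_0(t)$ at every internal vertex, fixes the diagonal interior arrows and kills the leaf arrows. What remains is a uniform-dimension tree representation with $\det X_a=0$ on every arrow, which is unstable by \Cref{nullcone_tree_uniform}.

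\textbf{Steps~2 and~3 are not yet a construction either.} A leaf vertex has a single arrow, so its moment is $\pm p(X_k^*X_k)$ or $\pm p(X_kX_k^*)$. There is nothing to telescope there. Singular values in geometric progression on one leaf arrow would make that leaf moment comparable to $\|X_k\|^2$.

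In \Cref{weightmargin_star_quiver} each leaf arrow is $\sqrt{\lambda_k}$ times a coordinate embedding $\bC^{|\supp(u_k)|}\to\bC^{n+1}$. Hence $X_k^*X_k=\lambda_kI$, so the leaf moment is zero, and $X_kX_k^*=\lambda_k\diag(u_k)$. The powers of $2$ live in the weights $\lambda_k$ across different leaves, not inside one arrow.

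Everything then reduces to the combinatorial input your proposal is missing: vectors $u_1,\dots,u_n\in\{0,1\}^{n+1}$ with $0\notin\conv(p(u_1),\dots,p(u_n))$ but $\dist(0,\conv(p(u_1),\dots,p(u_n)))\le(2^{\lfloor (n+1)/2\rfloor}-1)^{-1}$. \Cref{margin_of_projected_cube} obtains these from an explicit $0/1$ matrix. Its only coefficient vector producing a constant vector is, up to scaling, $(2^{k-1},2^{k-1},\dots,1,1,1)$.

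With this in hand, the ``main obstacle'' of your Step~3 disappears. The condition $0\notin\conv(p(u_k))$ yields $w$ with $\sum_iw_i=0$ and $\langle w,u_k\rangle>0$ for all $k$. This directly defines a one-parameter subgroup of $\ST_\alpha$ sending every leaf arrow to $0$. There is no need to guess weights such as $(1,\dots,1,-n)$ or to engineer a rank deficit for King's criterion.
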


In section 7 we extend the previous theorem to show that for connected quivers the weight margin becomes exponentially small in the number of vertices for certain dimension vectors.

\begin{theorem}[\Cref{weight_margin_connected quivers} restated]
    Let $Q$ be a connected quiver with $n+1$ vertices. Then there exists a dimension vector $\alpha$ with $\alpha_{\max} \leq n^2$, such that
    \[ \gamma_{\ST}(Q,\alpha) \leq \frac{1}{2^{\lfloor \frac{n+1}{2}\rfloor}-1}. \]
\end{theorem}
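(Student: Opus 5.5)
The plan is to reduce to a spanning tree and then build an explicit set of torus weights whose convex hull misses the origin by only an exponentially small distance. The key fact used throughout is the characterization
\[ \gamma_{\ST}(Q,\alpha) = \min \{\dist(\conv(A), 0) \; |\; A \subseteq \Omega(\pi),\; 0 \not\in \conv(A)\}. \]
Since this is a minimum over subsets of weights, enlarging the weight set can only decrease the weight margin.

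\textbf{Step 1: reduce to a spanning tree.} Choose a spanning tree $T\subseteq Q$; it has the same $n+1$ vertices and the induced orientation. For any dimension vector $\alpha$, the $\ST_\alpha$-weights of $\Rep_\alpha(T)$ form a subset of those of $\Rep_\alpha(Q)$. For an arrow $a$ these weights are the vectors $e^{(ha)}_i - e^{(ta)}_j$, projected to the trace-zero part at each vertex. Hence
\[ \gamma_{\ST}(Q,\alpha)\le \gamma_{\ST}(T,\alpha), \]
and it suffices to prove the bound for tree quivers with $n+1$ vertices.

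\textbf{Step 2: trees with many leaves.} If $T$ has $\ell$ leaves, \Cref{tree_quivers_leaf_bound} together with $\gamma_{\ST}\le\gamma_{\SL}$ gives a bound $1/(2^{\lfloor(\ell+1)/2\rfloor}-1)$ with $\alpha_{\max}=\ell+1\le n^2$. This already settles the case of many leaves, but it is not enough in general: for a path the gap itself is polynomially bounded by \Cref{mainresult}. So the weight margin must be made small by a direct weight construction, not through the gap.

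\textbf{Step 3: the uniform weight construction.} I would imitate the mechanism of the leaf construction, but along the tree's edges, using the extra freedom of growing dimensions (up to $n^2$). The rough form of the construction is:
\begin{itemize}
\item Root $T$ and order its $n$ edges by a breadth-first or path order $a_1,\ldots,a_n$.
\item At each edge $a_k$, let the dimensions at its endpoints grow, at most quadratically in $k$.
\item Pick weights $w_k = e^{(ha_k)}_{i_k}-e^{(ta_k)}_{j_k}$. The coordinate indices are chosen so that the trace-zero projection at the shared vertex couples consecutive weights with a factor of $2$. Concretely, one coordinate at that vertex is used once by $w_k$ and twice by the relevant partner weights of $w_{k+1}$, so that the cancellation mirrors the relation $x_{k+1}=2x_k$.
\end{itemize}
With this doubling, the convex combination with coefficients proportional to $2^{-k}$ cancels all coordinates except a residual of size about $2^{-n/2}$. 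That residual gives
\[ \dist(\conv(A),0)\le \frac{1}{2^{\lfloor (n+1)/2\rfloor}-1}. \]
The halving of the exponent comes from needing two edges per doubling step, which matches the floor in the statement.

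\textbf{Step 4: check $0\notin\conv(A)$.} This is the step I expect to be the main obstacle: one must show that no other convex combination of the chosen weights vanishes, so that the small vector really is the distance to a hull that misses $0$. I would first try to find a linear functional $H$, diagonal and trace-zero at each vertex, with $\langle H,w\rangle>0$ for all $w\in A$. This functional would be built greedily along the edge order, with entries growing like powers of $2$; that growth forces the dimensions, and hence the bound $\alpha_{\max}\le n^2$, to be tracked carefully. If a clean separating functional is hard to write down, the fallback is to solve the small linear system for $\conv(A)\cap\{0\}$ directly and show that the triangular doubling structure forces every coefficient to vanish.
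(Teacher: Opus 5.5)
\textbf{There is a genuine gap.} Your Step 1 is correct and is exactly the paper's first reduction. You are also right that for a path the bound cannot come from the gap of the path itself (\Cref{mainresult}). The problem is in Steps 3--4, which contain no actual construction. You never fix a dimension vector, a weight set $A$, or coefficients. So none of the following is established: the size of the residual, the bound $\alpha_{\max}\le n^2$, or $0\notin\conv(A)$ (you flag the last one yourself).

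Moreover, the mechanism as written cannot produce a factor $2$. Suppose each edge $a$ contributes a single weight $p_{ha}(e_i)-p_{ta}(e_j)$ with coefficient $c_a\ge 0$, where $p_v$ is the trace-zero projection at $v$. At a vertex $v$ of degree two with edges $a,b$, the $v$-component of the combination is $\pm c_a\,p_v(e_i)\pm c_b\,p_v(e_j)$. A short case check ($i=j$ or $i\neq j$, $\alpha(v)=2$ or $\alpha(v)\ge 3$) shows the following. If this component has norm at most $\delta$ and $\alpha(v)\ge 2$, then $|c_a-c_b|=O(\delta)$ or $c_a+c_b=O(\delta)$. If $\alpha(v)=1$, the vertex does not couple $a$ and $b$ at all. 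So along a path, one weight per edge forces nearly equal coefficients, never a doubling. If you instead intend several ``partner'' weights per edge, then which weights, with which multiplicities at which coordinates, is exactly the missing content. Likewise, ``two edges per doubling step'' is not the source of the floor in the exponent.

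\textbf{The missing idea} is that the weight set $\Omega(\pi)$, and hence $\gamma_{\ST}$, does not see arrow multiplicity. Let $Q'$ be your tree with one arrow $b$, joining a leaf $v$ to its neighbour $w$, replaced by $n$ parallel copies. Then $\gamma_{\ST}(Q,\alpha)=\gamma_{\ST}(Q',\alpha)\le\gamma_{\SL}(Q',\alpha)$. Since $Q'$ is not of type $A$, your objection in Step 2 disappears: one \emph{can} go through a gap bound, just for $Q'$ rather than for the path.

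The $n$ copies of $b$ act as the $n$ arms of the star in \Cref{weightmargin_star_quiver}. Take $u_1,\dots,u_n\in\{0,1\}^{n+1}$ from \Cref{margin_of_projected_cube}. The leaf $v$ gets dimension $n+1$. The neighbour $w$ gets $\bigoplus_k Y(k)$, of dimension $\sum_k\#\supp(u_k)\le n^2$. The moment at $v$ is then $p(\diag(\sum_k\lambda_k u_k))$, of norm $\gamma(p(\{0,1\}^{n+1}))\le 1/(2^{\lfloor\frac{n+1}{2}\rfloor}-1)$.

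The rest of the tree only cancels the block-scalar imbalance $\diag(\lambda_k I_{\beta(k)})$ this creates at $w$. Each vertex $u\neq v$ carries the blocks of its subtree and absorbs one of them, so all moments away from $v$ vanish. Instability follows by extending the star's one-parameter subgroup blockwise, then killing the remaining block-diagonal arrows vertex by vertex; this is \Cref{gap_arrow_multiplicity}.

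So all the exponential smallness sits at a single vertex and comes from the projected cube. The halving in the exponent comes from the paired coordinates in its coefficient vector $(2^{k-1},2^{k-1},\dots,1,1,1)$, not from pairs of edges.
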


The most interesting open question concerns the behavior of the gap for $D,\hat{D}$-type quivers, we believe it highly likely, that bounds similar to those of sections 4 exist for these quivers. Another question concerns tree quivers with many vertices but few leaves for example take a vertex with 3 strands attached, such a quiver might also have a well behaved gap.

%% file: preliminaries.tex
This section consists of two preliminary topics. First we derive an explicit formula for the moment map of the $\SL_\alpha$ action on $\Rep_{\alpha}(Q)$. Second, we introduce $\sigma$-semistability of representations of acyclic quivers and relate it to $\SL_\alpha$-semistability.

\subsection{Moment map formula}

For a quiver $Q$ and dimension vector $\alpha$, the groups $\GL_\alpha,\SL_\alpha$ can be considered as self-adjoint subgroups of $\GL_N$, where $N:= \sum_{v\in Q_0} \alpha(v)$, via a block diagonal embedding. Let $\pi:\GL_\alpha \to \GL(\Rep_\alpha(Q))$ be the natural action, then the maximal compact subgroup $K$ and its Lie algebra are
\[ K = \prod_{v\in Q_0} U_{\alpha(v)} \quad \Lie(K) = \bigoplus_{v\in Q_0} i\Herm_{\alpha(v)},\]
i.e., tuples of unitary and skew-hermitian matrices respectively. The Lie-Algebra action on $X \in \Rep_\alpha(Q)$ is
\[ \Pi(H) \cdot X = \partial_{s=0} ( e^{H_{ha} s}X_a e^{-H_{ta}s})_{a\in Q_1} = ( H_{ha} X_a - X_a H_{ta})_{a\in Q_1}.\]
For a vertex $v \in Q_0$ we denote the set of incoming, outgoing and adjacent arrows as follows
\[ v^+ := \{ a \in Q_1 \;|\; ha = v \} \quad v^- := \{ a \in Q_1 \;|\; ta = v \} \quad v^{\pm} := v^+ \cup v^-.\]

\begin{restatable}{proposition}{momentmapformula}
\label{momentmap_formula}
    For a quiver $Q$ and dimension vector $\alpha$ the moment map $\mu_{\SL_\alpha} : \Rep_\alpha(Q) \to \bigoplus_{v\in Q_0} \Herm_{\alpha(v)}$ is given by
    \begin{equation}
        \label{moment_def}
        \mu_{\SL_\alpha}(X) = \frac{1}{\| X \|^2} \left( \mu_v(X) \right)_{v\in Q_0},
    \end{equation}
    where
    \begin{equation}
        \label{moment_at_v_def}
        \mu_v(X):= \sum_{a \in v^+} p(X_aX_a^*) - \sum_{a \in v^-} p(X_a^*X_a)
    \end{equation}
    and $p:\bC^{\alpha(v)\times \alpha(v)} \to \bC^{\alpha(v)\times \alpha(v)}, X \mapsto X-\frac{\tr(X)}{\alpha(v)} I_{\alpha(v)}$ is the orthogonal projection onto the zero-trace subspace. We call $\mu_v(X)$, the \emph{moment of $X$ at $v$}. 
\end{restatable}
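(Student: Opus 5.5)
The formula follows directly from the defining property of the moment map, $\tr(\mu(X)H) = \langle X, \Pi(H)X\rangle/\|X\|^2$ for all $H\in i\Lie(K_{\SL})$. Here $K_{\SL} = \prod_v SU_{\alpha(v)}$, so $i\Lie(K_{\SL}) = \bigoplus_v \Herm^0_{\alpha(v)}$, the tuples of traceless hermitian matrices. The plan is to compute the directional derivative explicitly, rewrite it as a trace pairing against $H$, and then identify the unique element of $\bigoplus_v \Herm^0_{\alpha(v)}$ representing this linear functional.

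First, for $H=(H_v)_v$ with each $H_v$ traceless hermitian, the Lie algebra action computed above gives
\[ \langle X, \Pi(H)X\rangle = \sum_{a\in Q_1} \tr\bigl(X_a^*(H_{ha}X_a - X_aH_{ta})\bigr) = \sum_{a\in Q_1} \bigl(\tr(X_aX_a^* H_{ha}) - \tr(X_a^*X_a H_{ta})\bigr), \]
using cyclicity of the trace. This quantity is real, since $X_aX_a^*$ and $H_{ha}$ are both hermitian. Equivalently, one can differentiate $\tfrac12\log\|e^{sH}\cdot X\|^2$ at $s=0$ and get the same expression divided by $\|X\|^2$. Next, regroup the sum by vertices: each arrow $a$ contributes $X_aX_a^*$ to the vertex $ha$ and $-X_a^*X_a$ to the vertex $ta$. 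This gives
\[ \langle X,\Pi(H)X\rangle = \sum_{v\in Q_0} \tr\Bigl(\bigl(\textstyle\sum_{a\in v^+} X_aX_a^* - \sum_{a\in v^-} X_a^*X_a\bigr) H_v\Bigr). \]
Loops need a short remark: a loop contributes to both sums at the same vertex, and this is consistent with the regrouping.

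The only subtle point is that the moment map must lie in $i\Lie(K_{\SL})$, the traceless part, so the pairing alone does not determine it inside all hermitian matrices. Since $\tr(M H_v) = \tr(p(M)H_v)$ whenever $\tr H_v = 0$ (because $\tr(I H_v)=0$), we may replace each block by its projection $p(\cdot)$. The resulting tuple $\mu_v(X)$ is traceless hermitian. The pairing $(A,B)\mapsto \sum_v\tr(A_vB_v)$ is nondegenerate on $\bigoplus_v \Herm^0_{\alpha(v)}$, so the element is uniquely determined, and it equals $\frac{1}{\|X\|^2}(\mu_v(X))_v$. This uniqueness, together with correctly matching the sign conventions for heads and tails, is the only step requiring care; the rest is routine.
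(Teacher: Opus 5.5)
Your proof is correct and follows essentially the same route as the paper: you compute $\langle X,\Pi(H)X\rangle$, regroup the arrow contributions by head and tail vertex, and project each block onto the trace-zero matrices. The only difference is that the paper first writes down the $\GL_\alpha$ moment map and gets the $\SL_\alpha$ version by citing a projection result of Franks--Reichenbach, whereas you check that projection step directly (via $\tr(I_{\alpha(v)}H_v)=0$ and nondegeneracy of the trace pairing on traceless hermitian matrices), which makes your argument self-contained.
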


\begin{proof}
        For $X\in \Rep_\alpha(Q)$ and $H\in i\Lie(K)= \bigoplus_{v\in Q_0} \Herm_{\alpha(v)}$ we have
        \begin{align*}
            \frac{\langle X, \Pi(H)\cdot X\rangle}{\langle X, X \rangle} &= \frac{\sum_{a\in Q_1} \tr(X_a^* H_{ha}X_a - X_a^* X_a H_{ta})}{\|X\|^2}\\ &= \frac{\sum_{a\in Q_1} \tr(X_aX_a^* H_{ha}) - \tr(X_a^* X_a H_{ta})}{\|X\|^2} \\&= \frac{\sum_{v\in Q_0} \tr\left(\left( \sum_{a\in v^+} X_aX_a^* - \sum_{a \in v^-} X_a^*X_a \right)H_v\right)}{\|X\|^2}.
        \end{align*}
        Thus, by the defining property of the moment map we have
        \[\mu_{\GL_\alpha}(X) = \frac{1}{\|X\|^2} \left( \sum_{a\in v^+} X_aX_a^* - \sum_{a \in v^-} X_a^*X_a \right)_{v\in Q_0}\]
        We note that $\Lie(\SL_\alpha) =  \bigoplus_{v\in Q_0} \{ X\in \bC^{\alpha(v) \times \alpha(v)} \;|\; \tr(X)=0\}$. Thus, by projecting $\mu_{\GL_\alpha}(X)$ onto the trace zero matrices, we obtain the moment map of the induced $\SL_\alpha$ action and the desired result by \cite[Proposition 4.2]{FR21}.
    \end{proof}

\subsection{$\sigma$-semistability of quiver representations}

Let $Q$ be a quiver and $\alpha$ a dimension vector. A \emph{weight} is a map $\sigma:Q_0\to \bZ$. It defines a \emph{character} (i.e., a one dimensional representation) $\chi_\sigma: \GL_\alpha \to \bC^\times \simeq \GL(\bC)$ via
\[ \chi_\sigma(g) := \prod_{v\in Q_0} \det(g_v)^{\sigma(v)}.\]
A representation $V\in \Rep_\alpha(Q)$ is $\sigma$\emph{-semistable} when $(V,1)\in \Rep_\alpha(Q)\oplus \chi_\sigma$ is $\GL_\alpha$ semistable. Otherwise $V$ issues  $\sigma$\emph{-unstable}. We denote the gap of the $\GL_\alpha$ action on $\Rep_\alpha(Q)\oplus \chi_\sigma$ as $\gamma_{\GL}(Q,\alpha,\sigma)$. 
For a dimension vector $\alpha$ we write $\sigma(\alpha):=\sum_{v\in Q_0} \sigma(v)\alpha(v)$. The $\sigma$-semistability of a representation can be checked using the following Theorem, which we will use throughout the paper, for a proof we refer to \cite{ITQR}. 

\begin{theorem}[King's Criterion]
\label{KC}
    Let $Q$ be an acyclic quiver, $\alpha$ a dimension vector and $\sigma$ a weight, such that $\sigma(\alpha) =0$. Then a representation $V\in \Rep_\alpha(Q)$ is $\sigma$-semistable if and only if $\sigma(\dim(W)) \leq 0$ for all subrepresentations $W\subseteq V$.
\end{theorem}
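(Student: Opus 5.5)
We derive King's Criterion from the Hilbert--Mumford criterion for the reductive group $\GL_\alpha$ acting on $\Rep_\alpha(Q)\oplus\chi_\sigma$, with $g\cdot(X,z)=(g\cdot X,\chi_\sigma(g)z)$. That criterion says $(V,1)$ is unstable if and only if there is a one-parameter subgroup $\lambda:\bC^\times\to\GL_\alpha$ with $\lim_{t\to 0}\lambda(t)\cdot(V,1)=0$. The plan is to translate one-parameter subgroups into filtrations of $V$ by subrepresentations, and the character into a weighted sum of $\sigma(\dim W)$ over those subrepresentations.

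\textbf{Encoding a one-parameter subgroup.} Up to conjugation, $\lambda$ is given by a grading $V(v)=\bigoplus_{k\in\bZ}V_k(v)$ at each vertex, with $\lambda(t)$ acting by $t^k$ on $V_k(v)$. Write $X_a$ in blocks $V_j(ta)\to V_k(ha)$; the block is scaled by $t^{k-j}$. Hence the arrow part tends to $0$ if and only if every nonzero block has $k>j$. In terms of the filtration $F_{\ge m}(v):=\bigoplus_{k\ge m}V_k(v)$, this reads
\[ X_a(F_{\ge m}(ta))\subseteq F_{\ge m+1}(ha)\quad\text{for all } a,m. \]
In particular each $F_{\ge m}$ is a subrepresentation of $V$. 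The character component is $\chi_\sigma(\lambda(t))=t^{e}$ with
\[ e=\sum_v\sigma(v)\sum_k k\dim V_k(v). \]
It tends to $0$ if and only if $e>0$. Summation by parts gives $e=\sum_{m\ge 1}\sigma(\dim F_{\ge m})-\sum_{m\le 0}\big(\sigma(\alpha)-\sigma(\dim F_{\ge m})\big)$. Using $\sigma(\alpha)=0$, this becomes $e=\sum_{m=m_0+1}^{m_1}\sigma(\dim F_{\ge m})$ over the finite range of weights that occur, shifted appropriately; terms with $F_{\ge m}=V$ or $F_{\ge m}=0$ contribute zero.

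\textbf{The ``if'' direction.} Suppose $\sigma(\dim W)\le 0$ for every subrepresentation $W$. Then every $\lambda$ that sends the arrow part to $0$ has $e\le 0$, because $e$ is a sum of such values. So no $\lambda$ drives $(V,1)$ to $0$, and $V$ is $\sigma$-semistable.

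\textbf{The ``only if'' direction.} Suppose some subrepresentation $W\subseteq V$ has $\sigma(\dim W)>0$; we build a destabilizing $\lambda$. Simply putting weight $1$ on $W$ and weight $0$ on a complement $W'$ fails the strict inequality $k>j$. This is the step that needs acyclicity. Since $Q$ is acyclic, there is a height function $h:Q_0\to\bZ$ with $h(ha)\ge h(ta)+1$ for every arrow (take a topological order). Choose complements $V(v)=W(v)\oplus W'(v)$. Let $\lambda(t)$ act by $t^{h(v)+M}$ on $W(v)$ and by $t^{h(v)}$ on $W'(v)$, for a large integer $M$. Check each block of $X_a$:
\begin{itemize}
\item $W\to W'$: this block vanishes, because $W$ is a subrepresentation.
\item $W\to W$ and $W'\to W'$: the exponent is $h(ha)-h(ta)\ge 1$.
\item $W'\to W$: the exponent is $M+h(ha)-h(ta)\ge 1$.
\end{itemize}
So the arrow part tends to $0$. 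The character exponent is
\[ e=\sum_v\sigma(v)h(v)\alpha(v)+M\,\sigma(\dim W), \]
which is positive for $M$ large. Hence $(V,1)$ is unstable.

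Both steps above are routine bookkeeping. The only substantive inputs are the Hilbert--Mumford criterion and the strictness trick using the height function $h$.
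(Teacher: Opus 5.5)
Your proof is correct. The paper does not prove King's criterion itself; it quotes it from \cite{ITQR}. Your argument is the standard Hilbert--Mumford proof: one-parameter subgroups correspond to filtrations by subrepresentations, and once $\sigma(\alpha)=0$ the exponent is $e=\sum_{m\in\bZ}\sigma(\dim F_{\ge m})$. You carry it out directly for the paper's definition of $\sigma$-semistability, namely $0\notin\overline{\GL_\alpha\cdot(V,1)}$, rather than via nonvanishing of semi-invariants of weight $k\sigma$.

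What this buys you is that you pin down exactly where acyclicity enters. Under the orbit-closure definition, a destabilizing one-parameter subgroup must contract every arrow block strictly, and the height function is what provides this. The hypothesis cannot be dropped. Take a loop $\ell$ at vertex $1$ and an arrow $a\colon 1\to 2$, with $\alpha=(1,1)$, $\sigma=(1,-1)$, $X_\ell=1$ and $X_a=0$. The subrepresentation of dimension $(1,0)$ has $\sigma$-value $1>0$, yet the invariant coordinate $X_\ell$ keeps $(V,1)$ away from $0$, so $V$ is $\sigma$-semistable in the paper's sense.

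Your split of the hypotheses is also clean. The ``only if'' direction uses only the trivial half of Hilbert--Mumford and never uses $\sigma(\alpha)=0$. The ``if'' direction uses $\sigma(\alpha)=0$ but never uses acyclicity.
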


The ring of invariants under the $\SL_\alpha$ action is called the \emph{ring of semi-invariants} and denoted as
\[ \SI(Q,\alpha) := \bC[\Rep_\alpha(Q)]^{\SL_\alpha}.\]
The \emph{ring of semi-invariants of weight} $\sigma$ is defined as
\[ \SI(Q,\alpha,\sigma) := \{ f\in \bC[\Rep_\alpha(Q)] \;|\; g\cdot f = \chi_\sigma(g) f \text{ for all } g\in \GL_\alpha \}.\]
It is clear, that $\SI(Q,\alpha,\sigma) \subseteq \SI(Q,\alpha)$. Moreover, the ring of semi-invariants splits as a direct sum of the rings of semi-invariants of weight $\sigma$, where $\sigma$ ranges over all weights, see for example \cite[Lemma 10.4.1]{ITQR}.
\begin{equation}
\label{semiinvariant_ring_decomposition}
    \SI(Q,\alpha) = \bigoplus_{\sigma : Q_0\to \bZ} \SI(Q,\alpha, \sigma).
\end{equation}
The semistability and $\sigma$-semistability of $V\in \Rep_\alpha(Q)$ are closely related by the following lemma.

\begin{lemma}
\label{sigma_unstable_relation}
    Let $V\in \Rep_\alpha(Q)$, then $V$ is $\SL_\alpha$-unstable if and only if it is $\sigma$-unstable for each non-zero weight $\sigma \in \bZ^{Q_0}\backslash\{0\}$. \JM{i would like to cite this result, but i could not find a source, it is not in \cite{ITQR} i think}
\begin{proof}
    For any weight $\sigma$ we have an isomorphism of $\GL_\alpha$ modules
    \[ \bC[\Rep_\alpha(Q) \oplus \chi_\sigma] \simeq  \bC[\Rep_\alpha(Q)] \otimes \bC[\chi_\sigma] \simeq \bC[x] \otimes \bC[y].\]
    Thus any polynomial $h(x,y)$ in this ring may be expressed as
    \[ h(x,y) = \sum_{k\in \bN} f_k(x) y^k\]
    with $f_k(x) \in \bC[\Rep_\alpha(Q)]$ and all but finitely many summands vanishing. The action of $\GL_\alpha$ on this polynomial is given by
    \[ g\cdot h(x,y) =  \sum_{k\in \bN} (g\cdot f_k)(x) (\chi_\sigma(g^{-1}))^k y^k = \sum_{k\in \bN} (g\cdot f_k)(x) (\chi_{k\sigma}(g))^{-1} y^k.\]
    Thus $h(x,y)$ is an invariant if and only if $g\cdot f_k = \chi_{k\sigma}(g) f$ for every $k\in \bN$ and $g\in \GL_\alpha$, i.e., when every $f_k$ is a semi invariant of weight $k\sigma$. This shows that the invariant ring splits as a direct sum
    \[ \bC[\Rep_\alpha(Q) \oplus \chi_\sigma]^{\GL_\alpha} \simeq \bigoplus_{k\in \bN} \SI(Q, \alpha, k\sigma).\]
    Therefore $(V,1) \in \Rep_\alpha(Q) \oplus \chi_\sigma$ is $\GL_\alpha$ unstable if and only if each semi-invariant of weight $k\sigma$ vanishes on $V$ for each $k> 0$. Similarly since we also have
    \[ \SI(Q,\alpha) = \bigoplus_{\sigma : Q_0\to \bZ} \SI(Q,\alpha, \sigma), \tag{\ref{semiinvariant_ring_decomposition}}\]
    we know that $V \in \Rep_\alpha(Q)$ is $\SL_\alpha$ unstable if and only if every semi-invariant of weight $\sigma$ vanishes on $V$ for each $\sigma\neq 0$. Combining these two facts yields that $V$ is $\SL_\alpha$ unstable if and only if $(V,1) \in \Rep_\alpha(Q) \oplus \chi_\sigma$ is $\GL_\alpha$ unstable for each non-zero $\sigma$.
\end{proof}
\end{lemma}

%% file: tree_quivers.tex
A \emph{tree quiver} is one whose underlying undirected graph is a tree. In this section we prove the following lower bound for the gap of tree quivers with uniform dimension vectors. Remarkably the bound does not depend on the orientation of $Q$.

\begin{restatable}{theorem}{mainresulttreequivers}
\label{mainresult_treequivers}
    Let $Q$ be a tree quiver with $n+1$ vertices and uniform dimension vector $\alpha = (d,\ldots, d)$. Then the gap of the $\SL_{\alpha}$ representation $\Rep_{\alpha}(Q)$ is lower bounded by 
    \[ \gamma_{\SL}(Q, \alpha) \geq \sqrt{\frac{6}{d(d-1) n(n+1)(2n+1)}} \geq \Omega(n^{-1.5}d^{-1}). \]
\end{restatable}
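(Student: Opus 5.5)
The plan is to bound $\|\mu_{\SL_\alpha}(X)\|$ from below through a destabilizing one-parameter subgroup with small norm. Fix an unstable $X \in \NC(Q,\alpha)\setminus\{0\}$, normalized so that $\|X\|=1$. Suppose $H=(H_v)_{v\in Q_0}$ is a tuple of trace-zero Hermitian matrices such that every entry of $X$ (in a basis adapted to $H$) lies in a weight space of $\Pi(H)$ with weight $\geq 1$. Concretely, for each arrow $a$ with $(X_a)_{ij}\neq 0$ we need $\lambda^{ha}_i-\lambda^{ta}_j\geq 1$. Then the defining property of the moment map gives
\[ \tr(\mu_{\SL_\alpha}(X)H)=\sum_{a}\sum_{i,j}(\lambda^{ha}_i-\lambda^{ta}_j)|(X_a)_{ij}|^2\geq \|X\|^2 = 1, \]
and hence $\|\mu_{\SL_\alpha}(X)\|\geq 1/\|H\|$. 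The statement then follows once we produce, for every unstable $X$, such an $H$ with
\[ \|H\|^2\leq \frac{d(d-1)}{6}\sum_{k=1}^n k^2 = \frac{d(d-1)n(n+1)(2n+1)}{36}. \]
Equivalently, the target is $\|H\|^2\leq \frac{d(d-1)}{6}\sum_k k^2$, with the vertices indexed by a ``depth'' $k\in\{1,\dots,n\}$, or some other labeling of the $n$ non-root vertices in which each contributes at most $\frac{d(d-1)}{6}k^2$.

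To build $H$, I first use the structure of instability on a tree with uniform dimension vector. If every $X_a$ is invertible, then $\prod_a \det(X_a)$ is a nonvanishing semi-invariant and $X$ is semistable. So instability forces a structural degeneracy, which King's criterion (\Cref{KC}), combined with \Cref{sigma_unstable_relation}, turns into a filtration. Concretely, I will use the Hilbert--Mumford criterion: instability gives a $1$-PS that destabilizes $X$, and in a suitable basis at each vertex this amounts to a flag of subspaces $0\subseteq V_v^{(1)}\subseteq\cdots\subseteq V_v^{(r)}=\bC^d$ mapped compatibly by the arrows, with the step of the flag strictly increasing along each arrow. The next step is to replace this arbitrary $1$-PS by a canonical, small one. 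Because the underlying graph is a tree, the ``potentials'' at the vertices can be chosen independently along each edge: root the tree, and propagate the filtration level from the root outward. The required shift at a vertex at distance $k$ from the root is then at most $k$ (one unit per edge traversed). Moreover, since $X_a$ is a single $d\times d$ matrix, the eigenvalue spread needed within each vertex can be taken among $d$ values whose trace-zero parts contribute at most $\frac{d(d-1)}{6}k^2$ to $\|H_v\|^2$.

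Putting these together, I will sum over vertices. At worst the tree is a path, so the depths run through $1,\dots,n$, and the root can be chosen to contribute nothing. This gives the bound $\|H\|^2\leq \frac{d(d-1)}{6}\sum_{k=1}^n k^2$, and hence the claimed inequality. The asymptotic form $\Omega(n^{-1.5}d^{-1})$ is then immediate.

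The main obstacle is the middle step: showing that a destabilizing $1$-PS can always be chosen with integer-spaced eigenvalues bounded by the depth, and with per-vertex norm at most $\frac{d(d-1)}{6}k^2$, independently of the orientation. My first attempt will be induction on the number of vertices, removing a leaf $\ell$ attached by an arrow $a$. If $X_a$ is invertible, the problem reduces to the smaller tree: identify $V(\ell)$ with its neighbour and copy $H$ there, shifted by one unit. If $X_a$ is singular, I will use $\ker X_a$ or $\operatorname{im} X_a$ (depending on orientation) to split off a destabilizing direction with a rank-one correction. The delicate part is ensuring that the trace-zero projection $p$ does not cost more than the $\frac{d(d-1)}{6}$ factor; I would check this by computing $\|p(\diag(\lambda))\|^2$ for the extremal integer patterns.
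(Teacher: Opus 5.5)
\textbf{There is a genuine gap: the construction of $H$, which carries all the content, is missing, and your numerical target is false.}

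\textbf{The reduction is fine; the target constant is wrong.} If $H$ is trace-zero Hermitian and every nonzero block of $X$ (in orthonormal bases adapted to $H$) has weight $\geq 1$, then indeed $\|\mu_{\SL_\alpha}(X)\|\geq 1/\|H\|$ when $\|X\|=1$. But your target is off by a factor of $6$. To obtain the theorem you need $\|H\|^2\leq d(d-1)\sum_{k=1}^n k^2$, that is, $d(d-1)k^2$ per vertex, not $\frac{d(d-1)}{6}k^2$. Here is a counterexample to your version. Take $n=1$ (a single arrow $1\to 2$) and $X$ of rank $d-1$ with all nonzero singular values equal. Then $\|\mu_{\SL_\alpha}(X)\|^2=2\|p(X^*X)\|^2/\|X\|^4=\frac{2}{d(d-1)}$, whereas your target would force $\|\mu_{\SL_\alpha}(X)\|^2\geq\frac{6}{d(d-1)}$. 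The same thing is visible locally. Put $H_{\mathrm{root}}=0$ and let the arrow $a$ point to the root. Every weight above $-1$ at the depth-one vertex must then lie in the line $\ker X_a$, so at least $d-1$ eigenvalues are $\leq -1$. Trace zero then forces an eigenvalue $\geq d-1$, so $\|H_v\|^2\geq d(d-1)$.

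\textbf{The mechanism you sketch does not work as described.}
\begin{itemize}
\item In $\SL_d$ you cannot shift all weights at a vertex by one unit, since that is not trace-zero. The shift must be paid for by a compensating eigenvalue on $\ker X_a$ or $(\im X_a)^\perp$, of size up to $(d-1)$ times the shift. So eigenvalues are not bounded by the depth, already at depth $1$.
\item These large eigenvalues propagate. Suppose the next arrow points away from the root and does not kill $\ker X_a$. Then its image line needs weight $\geq d$ at depth $2$. With the naive two-block choice (all non-kernel eigenvalues equal), the eigenvalues grow geometrically along paths of alternating orientation when $d\geq 3$.
\item A correct version of your dual route would need a real inductive invariant on the whole spectrum at each vertex. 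One would refine the flags by images and preimages and choose minimal-norm trace-zero weights. Nothing in the proposal supplies such an invariant.
\item Your ``invertible leaf arrow'' case is vacuous. Each $\det X_a$ is an $\SL_\alpha$-invariant, so every arrow of an unstable $X$ is singular (\Cref{nullcone_tree_uniform}).
\end{itemize}

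\textbf{How the paper avoids this.} It never constructs a one-parameter subgroup.
\begin{itemize}
\item It uses $\det X_a=0$ only to get $\|p(X_a^*X_a)\|\geq \|X_a\|^2/\sqrt{d(d-1)}$, which is where the factor $d(d-1)$ comes from.
\item The reverse triangle inequality gives $\|\mu_v(X)\|\geq (y_{o(v)}-\sum_{a\neq o(v)}y_a)_+$.
\item Sliding moves reduce the tree to the equioriented path.
\item Cauchy--Schwarz then produces the factor $\sum_k k^2$.
\end{itemize}
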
 
From now on we fix a tree quiver $Q$, a uniform dimension vector $\alpha = (d,\ldots, d)$ and an unstable representation $X\in \Rep_\alpha(Q)$. To this representation we associate the function $y: Q_1 \to \bR_{\geq 0}$ defined by
\[ y_a := \|p(X_a^*X_a)\| = \|p(X_aX_a^*)\|, \quad a \in Q_1.\]
Let us fix a leaf $\sigma \in Q_0$ and consider the tree quiver $\Tilde{Q}$ obtained by orienting each arrow in $Q$, such that they point towards $\sigma$. There is a natural map $o : \Tilde{Q}_0\backslash\{\sigma\} \to \Tilde{Q}_1$, mapping any $v\neq \sigma$ to its unique outgoing arrow. We define the function $f_{\Tilde{Q}} : \bR^{\Tilde{Q}_1}\backslash\{0\} \to \bR$ via
\begin{equation}
    \label{helper_tree_quivers_def}
    f_{\Tilde{Q}}(y) :=\frac{\sum_{v \in \Tilde{Q}_0 \backslash \{ \sigma \}} ( y_{o(v)} - \sum_{a\in v^+} y_a)_+^2}{(\sum_{a\in \Tilde{Q}_1} y_a)^2},
\end{equation}
where $x_+ = \max(x,0)$ denotes the positive part of a real number $x$. The proof of the main theorem now consists of three steps.
\begin{enumerate}
    \item First we lower bound the norm of the moment map in terms of $f_{\Tilde{Q}}(y)$
    \[ \|\mu_{\SL}(X)\|^2 \geq \frac{f_{\Tilde{Q}}(y)}{d(d-1)}.\]
    \item Next we inductively reduce to the case when $\Tilde{Q}$ is the equioriented $A_n$ quiver.
    \item Finally we use analytic methods to lower bound $f_{A_n}(y)$
    \[ f_{A_{n+1}}(y) \geq \frac{6}{n(n+1)(2n+1)}.\]
    
\end{enumerate}

\begin{example}
    Let $\Tilde{Q}$ be the following rooted tree quiver with the sink $\sigma$ marked in red.
    \[\begin{tikzcd}
	& \textcolor{red}{\bullet} \\
	& \bullet \\
	\bullet && \bullet & \bullet \\
	&& \bullet
	\arrow["4", from=3-1, to=2-2]
	\arrow["5", from=2-2, to=1-2]
	\arrow["3", from=3-3, to=2-2]
	\arrow["1", from=4-3, to=3-3]
	\arrow["2", from=3-4, to=3-3]
\end{tikzcd}\]
\[ f_{\Tilde{Q}}(x) = \frac{(x_5-x_4-x_3)_+^2 + x_4^2 + (x_3-x_2-x_1)_+^2 + x_2^2+x_1^2}{(x_1+x_2+x_3+x_4+x_5)^2} \]
\end{example}

\subsection*{Step 1}

We begin by describing the nullcone of tree quivers with uniform dimension vectors.

\begin{proposition}
\label{nullcone_tree_uniform}
    Let $Q$ be a tree quiver and $\alpha = (d, \ldots, d)$ a uniform dimension vector, then
    \[ \NC(Q, \alpha) = \{ X\in \Rep_\alpha(Q) \;|\; \det(X_a) = 0, \;\forall a \in Q_1 \} \]
    \begin{proof}
        Consider the polynomials $\det_a \in \bC[\Rep_\alpha(Q)]$ for $a\in Q_1$ defined by $\det_a(X) = \det(X_a)$. Clearly each of these is an $\SL_\alpha$ invariant which shows the inclusion from left to right. Conversely let $X \in \Rep_\alpha(Q)$ is a representation such that $\det(X_a)= 0$ for all $a \in Q_1$. We choose some leaf $u \in Q_0$ with outgoing arrow $a$ (the case of an incoming arrow works similarly). By acting at $u$ with a unitary transformation we may assume that $e_1 \in \ker(X_a)$. Now we act on $X$ with the one parameter subgroup $M : \bC^\times \to \GL_{\alpha}$, defined by $M_w(t) = I_d$ for $w \neq u$ and $M_v(t) = \diag(t^{1-d}, t, \ldots, t)$ to obtain $Y:= \lim_{t\to \infty} M(t) \cdot X$ with $Y_a = 0$ and $Y_b = X_b$ for all $b\in Q_0\backslash\{a\}$. This way we can inductively kill all arrows, which shows that $X$ is unstable.
    \end{proof}
\end{proposition}

Recall the formula for the moment map from \Cref{moment_def}
\[ \|\mu_{\SL}(X)\|^2 = \frac{\sum_{v\in Q_0} \|\mu_v(X)\|^2}{\|X\|^4}.\]
We aim to bound the norm of the moment map in terms of $y$. For the denominator we use the following upper bound.
\begin{lemma}
\label{norm_X_bound_in_terms_of_y}
    We have
    \[ \|X\|^2 \leq \sqrt{d(d-1)} \sum_{a\in Q_1} y_a. \]
\begin{proof}
    Let $a \in Q_1$ and denote $A := X_a^*X_a$ as well as the eigenvalues of $A$ as $\lambda_1 \geq \ldots \geq \lambda_d$. Since $X$ is unstable we know by \Cref{nullcone_tree_uniform}, that $A$ has vanishing determinant, so $\lambda_d = 0$. Using the Cauchy-Schwarz inequality, we obtain
    \[ \tr(A)^2 = \Bigg(\sum_{i=1}^{d-1} \lambda_i\Bigg)^2 \leq (d-1) \sum_{i=1}^{d-1} \lambda_i^2 = (d-1)\|A\|^2.\]
    Using the inequality above in the third step, we see
    \[ \|p(A)\|^2 = \sum_{i=1}^{d} \Big(\lambda_i - \frac{\tr(A)}{d}\Big)^2 = \|A\|^2 - \frac{\tr(A)^2}{d} \geq \frac{\tr(A)^2}{d-1} -\frac{\tr(A)^2}{d} = \frac{\tr(A)^2}{d(d-1)}.\]
    This yields
    \[ y_a = \|p(X_a^*X_a)\| \geq \frac{\tr(X_a^*X_a)}{\sqrt{d(d-1)}} = \frac{\|X_a\|^2}{\sqrt{d(d-1)}}. \]
    The result follows by summing over $a\in Q_1$.
\end{proof}
\end{lemma}
For the numerator of the norm of the moment map, we use the following lemma. Recall the definition of the moment at a vertex $v \in Q_0$ from \Cref{moment_at_v_def}.
\[ \mu_v(X) = \sum_{a \in v^+} p(X_aX_a^*) - \sum_{a \in v^-} p(X_a^*X_a).\]

\begin{lemma}
\label{norm_moment_at_v_bound_in_terms_of_y}
    Let $v\in Q_0$ and $o\in Q_1$ be some arrow adjacent to $v$. Then
    \[ \|\mu_v(X)\| \geq \Bigg( y_o - \sum_{\substack{a\in v^\pm \\ a\neq o} } y_a \Bigg)_+, \]
    where $x_+ = max(x,0)$ denotes the positive part of a real number. 
\begin{proof}
    W.l.o.g. assume that $o$ is an outgoing arrow, i.e. $to = v$. By the triangle inequality we have
    \[ \Bigg\| \sum_{a\in v^+} p(X_aX_a^*) - \sum_{\substack{a\in v^- \\ a\neq o}} p(X_a^*X_a) \Bigg\| \leq \sum_{\substack{a\in v^\pm \\ a\neq o}} \|p(X_aX_a^*)\| = \sum_{\substack{a\in v^\pm \\ a\neq o}} y_a. \tag{$\ast$} \]
    By using the reverse triangle inequality in the second step and $(\ast)$ in the fourth step we obtain the inequality
    \begin{align*}
        \|\mu_v(X)\| &= \|\sum_{a \in v^+} p(X_aX_a^*) - \sum_{a \in v^-} p(X_a^*X_a)\|\\ &\geq  \Bigg|\|p(X_o^*X_o)\| - \Bigg\|\sum_{a\in v^+ } p(X_aX_a^*) - \sum_{\substack{a\in v^-\\a \neq o} } p(X_a^*X_a)\Bigg\| \Bigg| \\ &\geq \Bigg(\|p(X_o^*X_o)\| - \Bigg\|\sum_{a\in v^+} p(X_a X_a^*) - \sum_{\substack{a\in v^-\\a \neq o}} p(X_a^*X_a)\Bigg\| \Bigg)_+ \\ &\geq \Bigg(y_o - \sum_{\substack{a\in v^\pm \\ a\neq o}} y_a\Bigg)_+ \qedhere
    \end{align*}
\end{proof}
\end{lemma}

Now we may combine the bounds from \Cref{norm_X_bound_in_terms_of_y} and \Cref{norm_moment_at_v_bound_in_terms_of_y} to obtain the following lemma

\begin{lemma}
\label{norm_moment_bound_f_Q}
    Let $\sigma \in Q_0$ be a leaf and let $\Tilde{Q}$ be the quiver obtained by orienting all arrows in $Q$, such that they point towards $\sigma$. Then we have
    \[ \|\mu_{\SL}(X)\|^2 \geq \frac{f_{\Tilde{Q}}(y)}{d(d-1)}.\]
\begin{proof}
    Recall the formula for the moment map from \Cref{moment_def}
    \[ \|\mu_{\SL}(X)\|^2 = \frac{\sum_{v\in Q_0} \|\mu_v(X)\|^2}{\|X\|^4}.\]
    The result follows by applying \Cref{norm_X_bound_in_terms_of_y} for the denominator and applying \Cref{norm_moment_at_v_bound_in_terms_of_y} for each summand (with $v\neq \sigma$) in the numerator, where the outgoing arrow $o$ is the unique outgoing arrow in $\Tilde{Q}$.
\end{proof}
\end{lemma}

\subsection*{Step 2}

By the previous lemma it suffices to find a lower bound for $f_{\Tilde{Q}}(y)$ for any tree quiver $\Tilde{Q}$ which is rooted at a leaf $\sigma$. We accomplish this goal by reducing to the case of $\Tilde{Q}$ being the equioriented $A_{n+1}$ quiver via so called sliding moves. When $a$ and $b$ are arrows pointing to the same vertex $w$ we define a new rooted tree quiver $S_a^b(\Tilde{Q})$ by changing the head of $a$ to be the tail of $b$ as in the diagram below
\[\begin{tikzcd}
	& w \\
	u \arrow["a", ur] && v \arrow["b"', ul]
\end{tikzcd} \quad \longrightarrow \quad \begin{tikzcd}
	& w \\
	u \arrow["a", rr] && v \arrow["b"', ul]
\end{tikzcd}\]
    We also define an associated linear map
    \[ S_a^b : \bR^{\Tilde{Q}_1} \to \bR^{\Tilde{Q}_1},\quad (y_c)_{c\in \Tilde{Q}_1} \mapsto (z_c)_{c\in \Tilde{Q}_1} \quad \text{with } z_c := \left\{ \begin{array}{cc}
        y_b+y_a & ,\text{ if }c=b \\
        y_c & ,\text{ otherwise}
    \end{array} \right. \]
    This map is illustrated in the picture below.
    \[\begin{tikzcd}
	& w \\
	u \arrow["y_a", ur] && v \arrow["y_b"', ul]
\end{tikzcd} \quad \longrightarrow \quad \begin{tikzcd}
	& w \\
	u \arrow["y_a", rr] && v \arrow["y_a+y_b"', ul]
\end{tikzcd}\]

\begin{lemma}
    \label{sliding_decreases_f}
    For any $y \in \bR^{\Tilde{Q}_1}_{\geq 0}$, we have
    \[ f_{\Tilde{Q}}(y) \geq f_{S_a^b(\Tilde{Q})}(S_a^b(y)).\]
\begin{proof}
    Denote $z := S_a^b(y)$. The numerator of both sides is the same since 
\[ y_{o(v)} - \sum_{c\in v^+} y_c = y_b - R = (y_a + y_b) - y_a - R = z_{b} - z_a - R = z_{o(v)} - \sum_{c\in v^+} z_c,\]
where $R := \sum_{c\in v^+\backslash\{a\}} y_c = \sum_{c\in v^+\backslash\{a\}} z_c$ and similar equalities hold for the other vertices. The denominator of the left hand side is less than or equal to the denominator of the right hand side since
\[ \sum_{c\in \Tilde{Q}_1} y_c \leq y_a + y_b + \sum_{\substack{c\in \Tilde{Q}_1 \\ c \neq b}} y_c = z_b + \sum_{\substack{c\in \Tilde{Q}_1 \\ c \neq b}} z_c = \sum_{c\in \Tilde{Q}_1} z_c.\]
Combining these two observations we obtain
\[ f_{\Tilde{Q}}(y) = \frac{\sum_{v \in \Tilde{Q}_0 \backslash \{ \sigma \}} ( y_{o(v)} - \sum_{a\in v^+} y_c)_+^2}{(\sum_{a\in \Tilde{Q}_1} y_c)^2} \geq \frac{\sum_{v \in \Tilde{Q}_0 \backslash \{ \sigma \}} ( z_{o(v)} - \sum_{a\in v^+} z_c)_+^2}{(\sum_{a\in \Tilde{Q}_1} z_c)^2} = f_{S_a^b(\Tilde{Q})}(S_a^b(y)) \qedhere \]
\end{proof}
\end{lemma}

\begin{lemma}
\label{f_Q_geq_f_A}
    Let $\Tilde{Q}$ be a tree quiver with $n+1$ vertices, which is rooted at a leaf, then
    \[ \min_{y\in \bR^{\Tilde{Q}_1}_{\geq 0}} f_{\Tilde{Q}}(y) \geq \min_{y \in \bR^{n+1}_{\geq 0}} f_{A_{n+1}}(y).\]
\begin{proof}
    Let $y\in \bR^{\Tilde{Q}_1}_{\geq 0}$. In case there exist arrows $a,b \in \Tilde{Q}_1$ with the same head $w$, we produce a new quiver $\Tilde{Q}^1 := S_a^b(\Tilde{Q})$ and a new vector $y^1 := S_a^b(y)$. We repeat this process to produce a sequence of quivers $\Tilde{Q}, \Tilde{Q}^1, \ldots, \Tilde{Q}^N$ and a sequence of vectors $y, y^1,\ldots, y^n \in \bR^{\Tilde{Q}_1}_{\geq 0}$, such that $\Tilde{Q}^{k+1}$ is obtained from $\Tilde{Q}^k$ by a sliding move. This process terminates since the sum of the distances of each vertex to the root $\sigma$ is strictly increasing under sliding moves. By \Cref{sliding_decreases_f} we have a sequence of inequalities
    \[ f_{\Tilde{Q}}(y) \geq f_{\Tilde{Q}^1}(y^1) \geq \ldots \geq f_{\Tilde{Q}^N}(y^N).\]
    Since no two arrows have the same head in $\Tilde{Q}^N$, any vertex in $\Tilde{Q}^N$ has in-degree at most $1$ and since it is a rooted tree quiver any vertex has out-degree at most $1$ as well. Thus $\Tilde{Q}^N$ must be the equioriented $A_{n+1}$ quiver and we are done.
\end{proof}
\end{lemma}

\subsection*{Step 3}

\begin{lemma}
\label{f_A_bound}
    Let $n\in \bN$ and let $\Omega = \{y \in \bR^{n}_{\geq 0} \;|\; \sum y_k \neq 0\}$, then we have
    \[ \min_{y \in \Omega} f_{A_{n+1}}(y) = \frac{6}{n(n+1)(2n+1)}.\]
\begin{proof}
    We label the vertices and arrows of the equioriented $A_{n+1}$ quiver as follows.
    \[\begin{tikzcd}
	1 & 2 & 3 & \ldots & {n+1}
	\arrow["1", from=1-1, to=1-2]
	\arrow["2", from=1-2, to=1-3]
	\arrow["3", from=1-3, to=1-4]
	\arrow["n", from=1-4, to=1-5]
    \end{tikzcd}\]
    Expressing $f(y) := f_{A_{n+1}}(y)$ with these labels yields
    \[ f(y) = \frac{y_1^2+ \sum_{k=2}^n (y_k - y_{k-1})_+^2}{(\sum_{k=1}^n y_k)^2}\]
    Consider the variables $x_1 := y_1$ and $x_k:= (y_k-y_{k-1})_+$ for $k\geq 2$. For any $k$ we have $y_k \leq \sum_{l = 1}^k x_k$ and thus
    \[ \sum_{k=1}^n y_k \leq \sum_{k=1}^n (n+1-k) x_k.\]
    It follows that
    \[ f(y) \geq \frac{\sum_{k=1}^n x_k^2}{(\sum_{k=1}^n (n+1-k) x_k)^2} \geq \frac{\sum_{k=1}^n x_k^2}{(\sum_{k=1}^n (n+1-k)^2)(\sum_{k=1}^n x_k^2)} = \frac{1}{\sum_{k=1}^n k^2} = \frac{6}{n(n+1)(2n+1)},\]
    where we used the Cauchy-schwarz inequality in the second step.
\end{proof}
\end{lemma}

Finally we prove the main result by combining the last 3 lemmata.
\mainresulttreequivers*
\begin{proof}

Let $Q$ be a tree quiver and $\alpha= (d,\ldots, d)$ a uniform dimension vector. Let $X\in \Rep_\alpha(Q)$ be unstable and denote $y_a := \|p(X_a^*X_a)\|$ for $a\in Q_1$. Let $\sigma \in Q_0$ be a leaf and let $\Tilde{Q}$ be the quiver obtained by orienting all arrows in $Q$, such that they point towards $\sigma$. Then we have
\[ \|\mu_{\SL}(X)\|^2 \geq \frac{f_{\Tilde{Q}}(y)}{d(d-1)} \geq \frac{6}{d(d-1)n(n+1)(2n+1)},\]
where we used \Cref{norm_moment_bound_f_Q} in the first step and \Cref{f_Q_geq_f_A} and \Cref{f_A_bound} in the second step. Since $X$ is an arbitrary unstable representation we obtain the result
\[ \gamma_{\SL}(Q, \alpha) \geq \sqrt{\frac{6}{d(d-1) n(n+1)(2n+1)}} \geq \Omega(n^{-1.5}d^{-1}). \qedhere\]
\end{proof}

%% file: A_and_extended_A.tex
The goal of this section is to prove the following theorem, which yields lower bounds on the gap of quivers of type $A$ and $\hat{A}$.

\begin{restatable}{theorem}{Mainresult}
\label{mainresult}

Let $Q$ be a quiver of type $A_{n}$ or $\hat{A}_n$ and let $\alpha$ be a dimension vector.
\begin{enumerate}
    \item In case $Q$ is of type $A$ we have
\[ \gamma_{\SL}(Q,\alpha) \geq \frac{1}{\sqrt{2}} \alpha_{\max}^{-2} n^{-1.5}. \]
    \item In case $Q$ is of type $\hat{A}$ and acyclic we have
\[ \gamma_{\SL}(Q,\alpha) \geq  \alpha_{\max}^{-3.5} n^{-1.5}. \]
    \item In case $Q$ is the oriented cycle, we have
\[ \gamma_{\SL}(Q,\alpha) \geq \alpha_{\max}^{-3} n^{-1.5}. \]
\end{enumerate}
\end{restatable}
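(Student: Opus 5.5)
The idea is to exploit the structure of unstable representations of type $A$ and $\hat A$ quivers through Hilbert--Mumford / King's criterion. The plan is to produce an explicit test element $H\in\bigoplus_v \Herm_{\alpha(v)}$ and lower bound $\|\mu_{\SL}(X)\| \geq \tr(\mu_{\SL}(X)H)/\|H\|$. The key step is to show that for unstable $X$ the pairing $\tr(\mu_{\SL}(X)H)$ is bounded below by a fixed fraction of $\|X\|^2$.

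First, I reduce to a convenient normal form. By \Cref{sigma_unstable_relation} and King's criterion (\Cref{KC}, for the acyclic cases), $X$ unstable means that for every nonzero weight $\sigma$ with $\sigma(\alpha)=0$ there is a subrepresentation $W$ with $\sigma(\dim W)>0$. Rather than use this directly, I would use the Kempf/Hilbert--Mumford picture: there is a destabilizing filtration of each $V(v)$ compatible with the arrows. For type $A$, \Cref{indecomposables_of_AD} lets me decompose $X\cong\bigoplus_i I_{[s_i,t_i]}$ into interval modules. An $\SL_\alpha$-semi-invariant is nonvanishing only if the dimension vector admits a compatible weight, so instability forces a flag of subrepresentations $0=W_0\subset W_1\subset\cdots$ with dimension vectors built from intervals.

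I then take $H$ to be the projection to trace zero of $\sum_j c_j P_{W_j}$, where $P_{W_j}$ are the orthogonal projections onto the subspaces $W_j(v)\subseteq V(v)$. Since the $W_j$ are subrepresentations, each arrow maps $W_j(ta)$ into $W_j(ha)$, and the standard computation gives
\[
\langle X,\Pi(P_W)X\rangle=\sum_a \|(I-P_{W(ha)})X_a P_{W(ta)}\|^2\cdot(-1) + \cdots ,
\]
which is nonnegative in the right direction. After subtracting the trace parts one obtains
\[
\tr(\mu(X)H)\cdot\|X\|^2=\sum_a\|\text{off-diagonal blocks}\|^2-\sum_v \frac{\tr(\cdot)}{\alpha(v)}\text{-corrections}.
\]
The trace corrections are weighted by $\sigma(\dim W_j)$ and are handled by the instability condition.

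The main obstacle is that these off-diagonal block norms may be tiny compared with $\|X\|^2$: the mass of $X$ can sit entirely in blocks preserved by the filtration. Here I would mimic Section 3. I assign to each arrow the quantity $y_a=\|p(X_a^*X_a)\|$ (or its block analogue) and bound $\|\mu_v(X)\|$ from below by local positive parts, as in \Cref{norm_moment_at_v_bound_in_terms_of_y}. Along a path, instability forces some "deficit" at each vertex, because the rank of each $X_a$ is at most the dimension of an interval-constrained subspace. I then telescope along the path with a Cauchy--Schwarz argument as in \Cref{f_A_bound}, which gives the factor $n^{-1.5}$. The factors of $\alpha_{\max}$ come from converting between $\tr$ and $\|p(\cdot)\|$ as in \Cref{norm_X_bound_in_terms_of_y}, with the factor $\sqrt{\alpha(v)(\alpha(v)-1)}$, and from the trace-projection denominators $1/\alpha(v)$.

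For $\hat A$, I would cut the cycle at one vertex and reduce to the type $A$ argument. The extra $\alpha_{\max}^{1.5}$ (respectively $\alpha_{\max}$ for the oriented cycle) accounts for the one vertex where the two ends are glued. For the oriented cycle, instability is equivalent to the nilpotence of the cycle composition, and that nilpotence provides the local rank deficit instead.
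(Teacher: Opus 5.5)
There is a genuine gap, and it sits exactly at the obstacle you name yourself.

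\textbf{The test-element step does not close.} For a subrepresentation $W$ one has
\begin{align*}
\|X\|^2\,\tr\big(\mu_{\SL}(X)P_W\big)&=\sum_{a\in Q_1}\|P_{W(ha)}X_a(I-P_{W(ta)})\|^2\\
&\quad-\sum_{v\in Q_0}\frac{\dim W(v)}{\alpha(v)}\Big(\sum_{a\in v^+}\|X_a\|^2-\sum_{a\in v^-}\|X_a\|^2\Big).
\end{align*}
The block $(I-P_{W(ha)})X_aP_{W(ta)}$ in your display vanishes identically for a subrepresentation. The trace correction is the flow of the $\|X_a\|^2$ weighted by $\dim W(v)/\alpha(v)$, not by $\sigma(\dim W_j)$. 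It has no sign, and instability gives you no lower bound on it. Compare \Cref{mainresult_sigma_semistability}: there the same Rayleigh-trace argument succeeds only because, after normalizing $y=1$, the character contributes the fixed term $\langle\sigma,\beta\rangle\geq 1$. For $\SL_\alpha$ no such term exists, so everything rests on your fallback.

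\textbf{The fallback to Section~3 cannot work.} First, $y_a=\|p(X_a^*X_a)\|=\|p(X_aX_a^*)\|$ and the bound of \Cref{norm_X_bound_in_terms_of_y} exist only because $\alpha$ is uniform there: $X_a$ is square, and $\det X_a=0$ by \Cref{nullcone_tree_uniform}. For a rectangular $X_a$ with $X_aX_a^*=I$ you get $p(X_aX_a^*)=0$ although $X_a\neq 0$.

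More fundamentally, local spectra, ranks and triangle-inequality bounds cannot detect instability when $\alpha$ is non-uniform. Take $a\colon 1\to 2$, $b\colon 3\to 2$ and $\alpha=(1,2,1)$; the invariants are generated by $\det[X_a\;X_b]$.
\begin{itemize}
\item $X_a=X_b=e_1$ is unstable.
\item $X_a=e_1$, $X_b=e_2$ has $\mu_{\SL}=0$.
\end{itemize}
Both have identical spectra of all $X_a^*X_a$ and $X_aX_a^*$, identical ranks and identical $y$-values. \Cref{norm_moment_at_v_bound_in_terms_of_y} gives $(y_a-y_b)_+=0$ at the middle vertex. Any argument that sees instability only through such data would also ``prove'' $\|\mu\|>0$ for the second representation.

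The same happens on the oriented cycle. For $n=2$ and $\alpha=(2,2)$, $X_1=X_2=E_{11}$ is semistable with $\mu_{\SL}=0$, while $X_1=E_{11}$, $X_2=E_{12}$ is unstable, with all ranks and spectra equal. So nilpotence is not a local rank deficit. Your unspecified ``block analogue'' would have to carry exactly the missing information.

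\textbf{The missing idea.} You need a way to measure the relative position of the subspaces involved. This is what the paper builds:
\begin{itemize}
\item The exact identity of \Cref{Lemma1} is $\|p(A-B)\|^2=\|p(a-b)\|^2+2b^*(I-M)a$ with $M_{ij}=|\langle u_i,w_j\rangle|^2$. Via Birkhoff--von Neumann it is bounded below by terms $\Delta_k(a)\Delta_k(b)\,|\langle U_k,W_k^\perp\rangle|^2$ (\Cref{lem:difference_of_hermitians_bound}).
\item A bookkeeping graph on pairs $(i,k)$ splits the estimate into path and cycle components.
\item For each component the paper chooses a weight $\tau$ with $\tau(\alpha)=0$; King's criterion then yields $Y$ with $\tau(\dim Y)\geq 1$.
\item \Cref{lem:3_subspace_inequality} and the flag monotonicity $|\langle\bT^i_k,Y^{ti}\rangle|^2\leq|\langle\bH^i_k,Y^{hi}\rangle|^2$ (\Cref{lem:HeadTail_inequality}) turn this into a telescoping lower bound on the angle terms.
\end{itemize}
So King's criterion is used to force angles between eigenflags, not to build a test element. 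For the oriented cycle, nilpotence of $X_n\cdots X_1$ plays the role of $Y$, via one vector pushed around the cycle $\alpha_{\max}$ times.

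\textbf{Your reductions also fail.} Cutting $\hat A$ at a vertex does not transfer bounds. Instability does pass to the cut quiver, but at the glued vertex $\mu_v=p(A+B)$, whereas the cut quiver sees $p(A)$ and $p(B)$ separately, and $\|p(A+B)\|$ can be far smaller. Likewise, decomposing $X$ into interval modules is only a $\GL_\alpha$-isomorphism and does not preserve $\|\mu\|$. The paper uses that decomposition only for $Y$, where just $\dim Y$ matters, and treats $A_n$ by embedding it into $\hat A_n$ with $X_n=0$.
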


In section 4.1 we introduce a notion of angular distance between subspaces $U,W$ of a Euclidean space, which we denote by $|\langle U, W\rangle|^2$ and develop some techniques for working with this notion. In section 4.2 we introduce an important lower bound \Cref{lem:difference_of_hermitians_bound} which will be used to bound the norm of the moment of $X\in \Rep_\alpha(Q)$ at a vertex $v$ in terms of the spectra of the $X_i^*X_i$ as well as the angular distance of their eigenspaces. When summing up over $\|\mu_v(X)\|^2$ and applying the bound \Cref{lem:difference_of_hermitians_bound} we need to keep track of the summands appearing on the right hand side. In order to do that, we construct a graph $G$ and an associated function $f_G$, such that
\[ 4\|\mu(X)\|^2 \geq \alpha_{\max}^{-2} f_{G}(s,\lambda),\]
for certain values of $s$ and $\lambda$. In \Cref{lem:graph_component_inequality} we proceed to bound the right hand side in terms of $f_{H_i}(s,\lambda)$, where $H_i$ range over the connected components of $G$. Some connected components yield weights $\sigma$ for which $\sigma(\alpha) =0$, then Kings-criterion \ref{KC} and \Cref{sigma_unstable_relation} yield a subrepresentation $Y\subseteq X$, such that $\sigma(\dim(Y)) \geq 1$. This is the crucial tool to let us bound $f_{H_i}(s,\lambda)$ in \Cref{lem:graph_component_inequality}.

Throughout this section $Q$ will be a quiver of type $\hat{A}_n$ and we begin by defining our labeling of the vertices and arrows.

\begin{definition}
    For any quiver $Q$ of type $\hat{A}_n$ we label the vertices and arrows as follows
    \[ Q_0:=\bZ/n\bZ\quad Q_1:=\{1,\ldots, n\}.\]
    We define the head and tail maps as follows
    \[ (ta,ha):=\left\{ \begin{array}{cl}
        (a,a+1) & \text{, if $a$ points right} \\
        (a+1,a) & \text{, if $a$ points left}
    \end{array}\right. \]
\end{definition}

Notice that both vertices and arrows are labeled by integers, this will make notation later on much easier. For example it allows us to define the head and tail maps through simple arithmetic.

\subsection{Angles and subspaces}

We begin by defining angles between vectors and subspaces and introducing a notion of angular distance between subspaces. Let $V\subseteq \bC^{d}$ be a subspace and let $u,v\in \bC^d\backslash\{0\}$ be vectors. The \emph{angle of $u$ and $v$} is defined as
\[ \angle(u,v) := \cos^{-1} \Big( \frac{|\langle u, v \rangle| }{\|u\| \cdot \|v\|} \Big), \]
with the inverse being chosen in the interval $[0, \pi[$. We define the angles $\angle(v,0)=\angle(0,v):= \frac{\pi}{2}$. The \emph{angle of $V$ and $v$} is defined as 
\[\angle(V,v) := \angle(\pi_V(v), v),\]
where $\pi_V : \bC^d \to \bC^d$ is the orthogonal projection onto $V$. Notice that $\angle(V,v) \in [0,\frac{\pi}{2}]$. We made the unusual definition $\angle(0,v):= \frac{\pi}{2}$, since we want to ensure that $v\in V^{\perp} \Leftrightarrow \angle(V,v)=\frac{\pi}{2}$. Now let $U,W\subseteq \bC^d$ be two subspaces and let $u_1,\ldots, u_m$ and $w_1,\ldots, w_n$ be orthonormal bases for $U,W$ respectively. We define
\[ |\langle U, W \rangle|^2 := \sum_{i=1}^m \sum_{j=1}^n |\langle u_i,w_j\rangle|^2. \]
We note, that this definition doesn't depend on the choice of unitary bases, which follows from \Cref{lem:subspace_similarity_equations} below. We also notice, that
\[ |\langle U, W \rangle|^2 = 0 \quad \iff \quad U \perp W.\]
Thus $|\langle U, W \rangle|^2$ can be thought of as a measure for how close the two subspaces are to being orthogonal. In this section we develop a variety of lemmata relating the various notions defined above. 





\begin{lemma}
\label{lem:subspace_similarity_equations}
    Let $U,V\subseteq \bC^d$ be subspaces and $u_1,\ldots, u_m \in U$ and $v_1,\ldots, v_n\in V$ orthonormal bases, then the following equalities hold
    \[ |\langle U,V\rangle|^2 = \tr(\pi_U \pi_V) = \sum_{i=1}^m \norm{\pi_V(u_i)}^2 = \sum_{i=1}^m \cos^2(\angle (V,u_i)) = \sum_{j=1}^n \norm{\pi_U(v_j)}^2 =  \sum_{j=1}^n \cos^2(\angle(U,v_j)).\]
    Further there exist orthonormal bases $u_1,\ldots, u_m \in U$ and $v_1,\ldots, v_n\in V$ such that $\langle u_i,v_j\rangle = 0$ whenever $i\neq j$. The angles $\angle(u_i,v_i)$ are called the principal angles of $U$ and $V$ \cite[Theorem 2.1]{Zhu_2013}.

    \begin{proof}
    For the first equality, we calculate
    \begin{align*}
        \tr(\pi_U\pi_V) &= \tr\left( \left(\sum_{i=1}^m u_iu_i^* \right)\left(\sum_{j=1}^n v_jv_j^* \right)\right)\\ &= \tr\left( \sum_{i=1}^m \sum_{j=1}^n u_iu_i^*v_jv_j^* \right)\\ &= \sum_{j=1}^n \sum_{i=1}^m \tr(u_iv_j^*u_i^*v_j)\\ &= \sum_{j=1}^n \sum_{i=1}^m |\langle u_i, v_j\rangle|^2. 
    \end{align*}
    
    For the second equality we notice
    \[ \langle u_i, \pi_V(u_i)\rangle = \langle \pi_V(u_i) + \pi_{V^\perp}(u_i), \pi_V(u_i)\rangle = \langle \pi_V(u_i), \pi_V(u_i)\rangle = \norm{\pi_V(u_i)}^2\]
    and we obtain
    \[ \tr(\pi_U \pi_V) = \sum_{i=1}^m \tr(u_i u_i^* \pi_V) = \sum_{i=1}^m u_i^* \pi_V u_i = \sum_{i=1}^m \langle u_i, \pi_V(u_i)\rangle = \sum_{i=1}^m \norm{\pi_V(u_i)}^2.\]
    
    For the third equality, we calculate
    \[\cos^2(\angle(V,u_i)) = \frac{\langle u_i, \pi_V(u_i)\rangle^2}{\norm{\pi_V(u_i)}^2} = \norm{\pi_V(u_i)}^2.\]
    The fourth and fifth equality follow by exchanging $U$ and $V$.
    
    For the last statement consider the singular value decomposition of $\pi_U\circ \pi_V : V\to U$. There exist orthonormal bases $u_1,\ldots, u_m \in U$ and $v_1,\ldots, v_n\in V$ such that $\pi_U\pi_V v_i = \lambda_i u_i$ for some $\lambda_i\in \bR$. We obtain $\lambda_j u_j = \pi_U v_j = \sum_{i=1}^m u_i u_i^* v_j = \sum_{i=1}^m \langle u_i, v_j\rangle u_i $ and thus $\langle u_j,v_i\rangle = 0$ for $i\neq j$.
    \end{proof}
    
\end{lemma}

Some more basic properties include the following

\begin{lemma}
    Let $U,V,W\subseteq \bC^d$ be subspaces. Then the following statements hold.
    \begin{enumerate}
        \item If $U\subseteq V$, we have $|\langle U,V\rangle|^2 = \dim(U)$.
        \item If $U\perp W$, we have $|\langle U+W,V\rangle|^2 = |\langle U,V\rangle|^2 + |\langle W,V\rangle|^2$
        \item We have $|\langle U,V\rangle|^2 = \dim(V) - |\langle U^\perp,V\rangle|^2$.
    \end{enumerate}
    \begin{proof}
    \begin{enumerate}
        \item We have $\pi_U\pi_V = \pi_U$ and thus $|\langle U,V\rangle|^2 = \tr(\pi_U\pi_V)= \tr(\pi_U) =\dim(U)$.
        \item We have $\pi_{U+W} = \pi_U+\pi_W$ and thus \[|\langle U+W,V\rangle|^2 = \tr(\pi_V\pi_{U+W}) = \tr(\pi_V\pi_U) +\tr(\pi_V\pi_W) =|\langle U,V\rangle|^2 + |\langle W,V\rangle|^2. \]
        \item By 1. and 2., we have $|\langle U,V\rangle|^2 + |\langle U^\perp,V\rangle|^2 = |\langle \bC^d,V\rangle|^2 =  \dim(V)$. \qedhere
    \end{enumerate}
    \end{proof}
\end{lemma}

The following inequality is a well known result from elementary geometry. For nonzero vectors it is the triangle inequality for the Riemannian metric on spheres and if any vector is zero, it can be checked easily.
\begin{proposition}[Triangle inequality for angles]
\label{prop:triangle_for_angles}
    Let $u,v,w\in \bC^d$. Then
    \[ \angle(u,w) \leq \angle(u,v) + \angle(v,w).\]
\end{proposition}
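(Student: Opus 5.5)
The plan is to reduce to the spherical triangle inequality for the standard angle on projective space. The zero-vector cases are handled separately by hand.

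\emph{Degenerate cases.} By definition, any angle involving the zero vector equals $\frac{\pi}{2}$, and every angle lies in $[0,\frac{\pi}{2}]$ because we take absolute values of inner products.
\begin{itemize}
\item If $v=0$, the right hand side is $\frac{\pi}{2}+\frac{\pi}{2}=\pi$, which is at least the left hand side.
\item If $u=0$ (and symmetrically $w=0$), the left hand side is $\frac{\pi}{2}$ and the right hand side is at least $\angle(u,v)=\frac{\pi}{2}$.
\end{itemize}
So we may assume $u,v,w\neq 0$, and after rescaling that all three are unit vectors.

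\emph{Phase normalisation.} Since $\angle$ only depends on $|\langle\cdot,\cdot\rangle|$, we may multiply $u$ and $w$ by unit complex scalars. We choose these so that $\langle u,v\rangle\geq 0$ and $\langle v,w\rangle\geq 0$ are real. Then $\angle(u,v)=\cos^{-1}\langle u,v\rangle$ and $\angle(v,w)=\cos^{-1}\langle v,w\rangle$, and we have
\[ |\langle u,w\rangle|\geq \operatorname{Re}\langle u,w\rangle. \]

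\emph{Reduction to the real sphere.} We view $\bC^d$ as $\bR^{2d}$ with real inner product $\operatorname{Re}\langle\cdot,\cdot\rangle$, and write $\theta(x,y)=\cos^{-1}\operatorname{Re}\langle x,y\rangle\in[0,\pi]$ for the great-circle distance on $S^{2d-1}$. After the normalisation, $\angle(u,v)=\theta(u,v)$ and $\angle(v,w)=\theta(v,w)$. Since $\cos^{-1}$ is decreasing, $\angle(u,w)\leq\theta(u,w)$. It therefore suffices to prove
\[ \theta(u,w)\leq\theta(u,v)+\theta(v,w). \]

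\emph{Spherical triangle inequality.} This is the only step requiring work, and I would prove it directly. Let $a=\theta(u,v)$ and $b=\theta(v,w)$. If $a+b\geq\pi$ there is nothing to show, so assume $a+b<\pi$. Write
\[ u=\cos a\,v+\sin a\,x,\qquad w=\cos b\,v+\sin b\,y \]
with unit vectors $x,y\perp v$ (in the real inner product). Then
\[ \operatorname{Re}\langle u,w\rangle=\cos a\cos b+\sin a\sin b\operatorname{Re}\langle x,y\rangle\geq\cos a\cos b-\sin a\sin b=\cos(a+b). \]
Since $\cos^{-1}$ is decreasing on $[-1,1]$, this gives $\theta(u,w)\leq a+b$, which completes the argument.
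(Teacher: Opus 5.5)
Your proof is correct and takes the same route the paper indicates: check the zero-vector cases by hand, and for nonzero vectors reduce to the triangle inequality for great-circle distance on the sphere. The paper only asserts that reduction. Your phase normalisation supplies the missing detail: it is what shows that $\angle$, defined via $|\langle\cdot,\cdot\rangle|$, agrees with the spherical distance on two of the sides and is bounded above by it on the third. Your direct proof of the spherical inequality then completes the argument the paper leaves implicit.
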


\begin{lemma}
\label{lem:angle_inequality_3_vectors}
    Let $u,v,w\in \bC^d$ have norm $1$, such that $|\langle u,v\rangle|^2 + |\langle v,w\rangle|^2 \geq 1$, then we have
    \[ |\langle u,w\rangle|^2 \geq (|\langle u,v\rangle|^2 + |\langle v,w\rangle|^2 -1)^2. \]
    \begin{proof}
        Let $\alpha:= \angle(u,v)$ and $\beta:=\angle(v,w)$ and $\gamma:= \angle(u,w)$. Then we have
        \[ |\langle u,v\rangle|^2 = \cos^2(\alpha) \quad |\langle v,w\rangle|^2 = \cos^2(\beta) \quad |\langle u,w\rangle|^2 = \cos^2(\gamma).\]
        The condition $|\langle u,v\rangle|^2 + |\langle v,w\rangle|^2 \geq 1$ implies $\cos^2(\alpha) \geq \sin^2(\beta)$ which yields $\cos(\alpha) \geq \sin(\beta) = \cos(\frac{\pi}{2} - \beta)$, but since $\cos$ is decreasing on the interval $[0,\frac{\pi}{2}]$, we obtain $\alpha \leq \frac{\pi}{2}-\beta$, which implies $\alpha+\beta \leq \frac{\pi}{2}$. By the triangle inequality for angles we obtain $\gamma \leq \alpha+\beta$ and since $\cos$ is decreasing on the interval $[0,\frac{\pi}{2}]$ we obtain
        \[ \cos^2(\gamma) \geq \cos^2(\alpha+\beta) \geq (\cos^2(\alpha) + \cos^2(\beta)-1)^2,\]
        where the last inequality follows from the trigonometric fact, that
        \[ \frac{(\cos^2(\alpha) + \cos^2(\beta)-1)^2}{\cos^2(\alpha+\beta)} = \cos^2(\alpha-\beta) \leq 1. \qedhere\]
    \end{proof}
\end{lemma}

We can generalize the previous lemma to the following statement on arrangements of 3 subspaces.

\JM{ i removed the condition or $|\langle U,V\rangle|^2 + |\langle V,W\rangle|^2 \geq \dim(V)$ from the following lemma as i don't think its needed}
\begin{lemma}
\label{lem:3_subspace_inequality}
    Let $U,V,W\subseteq \bC^d$. If $\dim(U)+\dim(W)=d$ we have
    \[ |\langle U,W \rangle|^2 \geq \frac{1}{\dim(V)} (|\langle U,V\rangle|^2 + |\langle V,W\rangle|^2 - \dim(V))^2. \]

    \begin{proof}
    \JM{properly track indices here}
        W.l.o.g. assume $x:= |\langle U,V\rangle|^2 + |\langle V,W\rangle|^2 - \dim(V) \geq 0$. This is possible since we can pass to the orthogonal complements of $U$ and $W$ because
        \[ |\langle U^\perp,W^\perp \rangle|^2 = \dim(W^\perp) - |\langle U, W^\perp \rangle|^2 = \dim(W^\perp) - \dim(U) +  |\langle U, W \rangle|^2 = |\langle U, W \rangle|^2.\]
        and similarly
        \begin{align*}
            |\langle U^\perp,V\rangle|^2 + |\langle V,W^\perp\rangle|^2 - \dim(V) &= \dim(V) - |\langle U,V\rangle|^2 + \dim(V) - |\langle V,W\rangle|^2 - \dim(V)\\ &= -(|\langle U,V\rangle|^2 + |\langle V,W\rangle|^2 - \dim(V)).
        \end{align*}
        W.l.o.g. assume $\dim(U) \geq \dim(W)$. Let $u_1,\ldots, u_{\dim(U)}\in U$ and $v_1,\ldots, v_{\dim(V)}\in V$ be orthonormal bases, realizing the principal angles i.e. $\langle u_i,v_j\rangle = 0$ whenever $i\neq j$. And let $w_i:=\frac{\pi_W(v_i)}{\|\pi_W(v_i)\|}$ in case $v_i\not\in W^\perp$ or $w_i:=0$ in case $v_i\in W^\perp$. By \Cref{lem:subspace_similarity_equations} we obtain
        \begin{align*}
            |\langle U,V \rangle|^2 &= \sum_{i} |\langle u_i,v_i\rangle|^2, \\
            |\langle V,W \rangle|^2 &= \sum_{i=1}^{\dim(V)} \cos^2(\angle (v_i,W)) = \sum_{i=1}^{\dim(V)} |\langle v_i,w_i \rangle|^2, \\
            |\langle U,W \rangle|^2 &= \sum_i \cos^2(\angle(u_i,W)) \geq \sum_i \cos^2(\angle(u_i,w_i)) = \sum_{i} |\langle u_i, w_i\rangle|^2.
        \end{align*}
        For $i\leq \dim(U)$ let $x_i:= |\langle u_i,v_i \rangle|^2 + |\langle v_i,w_i \rangle|^2-1$ such that $x=\sum_{i} x_i$. In particular $\sum_{i: x_i>0} x_i \geq x$ and we obtain
        \[ \sum_{i: x_i>0} x_i^2 \geq \frac{x^2}{\dim(V)}.\]
        Finally we see
        \[ |\langle U,W \rangle|^2 \geq \sum_{i=1}^{\dim(U)} |\langle u_i,w_i\rangle|^2 \geq \sum_{i: x_i>0} x_i^2  \geq \frac{x^2}{\dim(V)} \]
        where we used \Cref{lem:angle_inequality_3_vectors} in the second step.
    \end{proof}
\end{lemma}

Finally the following lemma tells us how similarity of subspaces changes under linear maps.

\begin{lemma}
\label{lem:HeadTail_inequality_technique}
    Let $X\in \bC^{d'\times d}$ and let $U_k, U_k'$ be the span of the first $k$ eigenvectors of $X^*X,XX^*$ respectively and assume that the corresponding eigenvalues are nonzero i.e. $U_k\cap \ker(X)=\{0\}$. Let $V\subseteq \bC^d$ be a subspace, then we have
    \[ |\langle U_k,V\rangle|^2 \leq |\langle U_k',X(V)\rangle|^2.\]
    \begin{proof}
        We begin by proving the statement in case $\dim(V)=1$, so $V$ is spanned by some vector $v$. Denote the eigenvalues of $X^*X$ as $a_1\geq \ldots\geq a_d$ and the corresponding eigenvectors as $u_1,\ldots, u_d$. We notice that $Xu_1,\ldots, Xu_k$ form an orthogonal basis for $U_k'$. This holds since $(XX^*)Xu_i = X(X^*X)u_i = a_i Xu_i$, so the $Xu_i$ are indeed eigenvectors of $XX^*$ of the appropriate eigenvalues and they are orthogonal since  $\langle Xu_i, Xu_j \rangle = u_i^* X^*X u_j = a_j \langle u_i,u_j\rangle = 0$, for $i\neq j$. Thus we obtain
        \[ v=\sum_{i=1}^d  \langle u_i,v\rangle u_i, \quad \pi_{U_k}(v) = \sum_{i=1}^k \langle u_i,v\rangle u_i,\quad Xv = \sum_{i=1}^d  \langle u_i,v\rangle Xu_i ,\quad \pi_{U_k'}(Xv) = \sum_{i=1}^k  \langle u_i,v\rangle Xu_i.\]
        We further see
        \[ \|v\|^2 = \sum_{i=1}^{d} |\langle u_i,v\rangle|^2 , \quad \|\pi_{U_k}(v)\|^2 =\sum_{i=1}^k |\langle u_i,v\rangle|^2, \quad \|Xv\|^2=\sum_{i=1}^d |\langle u_i,v\rangle|^2 a_i, \quad \|\pi_{U_k'}(Xv)\|^2 = \sum_{i=1}^k |\langle u_i,v\rangle|^2 a_i.\]
        Now we obtain
        \begin{align*}
            \|\pi_{U_k}(v)\|^2 \cdot\|\pi_{U_k'^\perp}(Xv)\|^2 &= \sum_{i=k+1}^d \sum_{j=1}^k a_i |\langle u_j,v\rangle|^2\cdot |\langle u_i,v\rangle|^2\\
            &\leq \sum_{i=k+1}^d \sum_{j=1}^k a_j |\langle u_j,v\rangle|^2\cdot |\langle u_i,v\rangle|^2\\
            &= \|\pi_{U_k'}(Xv)\|^2 \cdot \|\pi_{U_k^\perp}(v)\|^2.
        \end{align*}
        Adding $\|\pi_{U_k}(v)\|^2 \cdot \|\pi_{U_k'}(Xv)\|^2$ to both sides of the inequality yields
        \[ \|\pi_{U_k}(v)\|^2\cdot \|Xv\|^2 \leq \|\pi_{U_k'}(Xv)\|^2 \cdot \|v\|^2.\]
        Finally this yields the desired inequality
        \[ |\langle U_k, V\rangle|^2 = \frac{\|\pi_{U_k}(v)\|^2}{\|v\|^2} \leq \frac{\|\pi_{U_k'}(Xv)\|^2}{\|Xv\|^2} = |\langle U_k', X(V)\rangle|^2.\]
        
        We return to the case when $\dim(V)$ is arbitrary. Denote $V':=X(V)$ and consider the restriction $X_{|V}:V\to V'$. By the singular value decomposition we can find orthonormal bases $v_1,\ldots, v_n$ of $V$ and $v_1',\ldots, v_m'$ of $V'$ such that $Xv_i = \lambda_i v_i'$ for $i\in [m]$. We denote $V_i:=\Span(v_i)$ and $V_i':=\Span(v_i')$ and see $X(V_i) = V_i'$ for $i\in [m]$. Since $Xv_i=0$ for $i>m$ we have $V_i\subseteq U_k^{\perp}$ for such $i$ and we see
        \[ |\langle U_k,V\rangle|^2 = \sum_{i=1}^n |\langle U_k,V_i\rangle|^2 = \sum_{i=1}^m |\langle U_k,V_i\rangle|^2 \leq \sum_{i=1}^m |\langle U_k',V_i'\rangle|^2 = |\langle U_k',V'\rangle|^2. \qedhere\]
    \end{proof}
\end{lemma}

\subsection{Bounding the components of the moment map}

We begin by introducing a bit of notation for various projection maps.
\[p:\bC^{d\times d} \to \bC^{d\times d}, A \mapsto A-\frac{\tr(A)}{d} I_d.\]
\[p:\bC^d \to \bC^d, a \mapsto a-\frac{1}{d} \Big(\sum_{i=1}^d a_i\Big) \1.\]
Finally we define $\Delta:\bC^d \to \bC^d$ by
\[ \Delta_i(x) :=\left\{ \begin{array}{cl}
         x_{i}-x_{i+1} & ,\text{for }\in [d-1] \\
         x_i & ,\text{for } i=d
     \end{array}\right.. \]


The aim of this section is to prove the following lemma. We will use it as follows: Any vertex $i\in Q_0$ is adjacent to at most two edges labeled $i-1,i$. Now assume we have a representation $X\in \Rep_\alpha(Q)$. Then the moment of $X$ at $i$ can be calculated as
\[ \mu_i(X) = p(A + B),\]
where $A=X_{i-1}X_{i-1}^*$ if $i-1$ points right or $A=-X_{i-1}^*X_{i-1}$ if $i-1$ points left. Similarly $B=X_iX_i^*$ if $i$ points left and $B=-X_i^*X_i$ if $i$ points right. 
In each case, we can apply one of the bounds from the next lemma.

\begin{lemma}
\label{lem:difference_of_hermitians_bound}
    Let $A,B\in \bC^{d\times d}$ be hermitian matrices with eigenvalues $a_1\geq \ldots \geq a_d$ and $b_1\geq \ldots \geq b_d$ as well as eigenvectors $u_1, \ldots, u_d$ and $w_1, \ldots, w_d$ respectively. For $k\in [d-1]$ let $U_k:=\Span(u_1,\ldots,u_k)$ as well as $W_k:=\Span(w_1,\ldots,w_k)$. Then we have
    \begin{align}
        \|p(A-B)\|^2 &\geq \frac{1}{4} \sum_{k=1}^{d-1} (\Delta_k(a)-\Delta_k(b))^2 + \frac{2}{d-1} \sum_{k=1}^{d-1} |\langle U_k,W_k^{\perp} \rangle|^2 \Delta_k(a)\Delta_k(b)\\
        \|p(A+B)\|^2 &\geq \frac{1}{4} \sum_{k=1}^{d-1} (\Delta_k(a)-\Delta_{d-k}(b))^2  + \frac{2}{d-1} \sum_{k=1}^{d-1}|\langle U_k,W_{d-k}\rangle|^2 \Delta_k(a) \Delta_{d-k}(b).
    \end{align}
\end{lemma}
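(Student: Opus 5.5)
The plan is a layer-cake decomposition: write both matrices as nonnegative combinations of spectral projections, and compare $\operatorname{tr}(AB)$ with the "aligned" value $\sum_i a_ib_i$. The second inequality will follow from the first by replacing $B$ with $-B$.

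\textbf{Decomposition.} For $k\in[d-1]$ put $\alpha_k:=\Delta_k(a)=a_k-a_{k+1}\ge 0$ and $\beta_k:=\Delta_k(b)\ge 0$. Then
\[ A=\sum_{k=1}^{d-1}\alpha_k\pi_{U_k}+a_dI_d,\qquad B=\sum_{k=1}^{d-1}\beta_k\pi_{W_k}+b_dI_d. \]
Let $D_a,D_b$ be the diagonal matrices with the same coefficients but with $\pi_{U_k},\pi_{W_k}$ replaced by the coordinate projections onto $\operatorname{span}(e_1,\dots,e_k)$, so that $D_a=\diag(a)$ and $D_b=\diag(b)$. Expanding $\operatorname{tr}(AB)$ and $\operatorname{tr}(D_aD_b)$ bilinearly, every term containing an identity matrix agrees, because $\operatorname{tr}\pi_{U_k}=\operatorname{tr}\pi_{W_k}=k$. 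The remaining terms give
\[ \sum_i a_ib_i-\operatorname{tr}(AB)=\sum_{k,l=1}^{d-1}\alpha_k\beta_l\bigl(\min(k,l)-|\langle U_k,W_l\rangle|^2\bigr). \]
Each bracket is nonnegative, since $|\langle U_k,W_l\rangle|^2=\tr(\pi_{U_k}\pi_{W_l})\le\min(k,l)$ by the basic properties lemma and \Cref{lem:subspace_similarity_equations}. For $k=l$ the bracket equals $|\langle U_k,W_k^\perp\rangle|^2$, by part 3 of that lemma.

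\textbf{First inequality.} We have $\|p(X)\|^2=\|X\|^2-\tr(X)^2/d$. The traces of $A-B$ and $D_a-D_b$ agree, and so do $\|A\|$, $\|B\|$ with $\|D_a\|$, $\|D_b\|$. Hence
\[ \|p(A-B)\|^2=\|p(a-b)\|^2+2\sum_{k,l}\alpha_k\beta_l\bigl(\min(k,l)-|\langle U_k,W_l\rangle|^2\bigr)\ \ge\ \|p(a-b)\|^2+2\sum_{k=1}^{d-1}\alpha_k\beta_k|\langle U_k,W_k^\perp\rangle|^2. \]
Here we dropped the off-diagonal terms, which are nonnegative. Since $2\ge \frac{2}{d-1}$, the angular term is at least as large as claimed.

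For the spectral term, set $c:=p(a-b)$. Shifting by a multiple of $\1$ does not change consecutive differences, so $c_k-c_{k+1}=\Delta_k(a)-\Delta_k(b)$. Using $(x-y)^2\le 2x^2+2y^2$,
\[ \sum_{k=1}^{d-1}(c_k-c_{k+1})^2\le 4\|c\|^2 , \]
which gives $\|p(a-b)\|^2\ge\frac14\sum_{k=1}^{d-1}(\Delta_k(a)-\Delta_k(b))^2$.

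\textbf{Second inequality.} Apply the first inequality to $A$ and $B':=-B$. The eigenvalues of $B'$ in decreasing order are $-b_d\ge\dots\ge -b_1$, with eigenvectors $w_d,\dots,w_1$. So for $k\in[d-1]$ the span of its top $k$ eigenvectors is $W_{d-k}^\perp$, and its $k$-th gap is $b_{d-k}-b_{d-k+1}=\Delta_{d-k}(b)$. Substituting, $\Delta_k(b')=\Delta_{d-k}(b)$ and $|\langle U_k,(W_{d-k}^\perp)^\perp\rangle|^2=|\langle U_k,W_{d-k}\rangle|^2$, which is exactly the second claim.

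\textbf{Main obstacle.} The only delicate step is the bookkeeping in the trace identity: checking that the identity-matrix terms cancel and that each bracket $\min(k,l)-|\langle U_k,W_l\rangle|^2$ is nonnegative. Everything else is elementary.
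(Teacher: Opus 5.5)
Your proof is correct, and for the key step it takes a different route from the paper. Both arguments rest on the same exact identity $\|p(A-B)\|^2=\|p(a-b)\|^2+2\,b^*(I_d-M)a$, where $M=(|\langle u_i,w_j\rangle|^2)_{i,j}$ is doubly stochastic. The paper derives it by diagonalizing $A$ and splitting $B$ into diagonal and off-diagonal entries; you get it more quickly from $\|p(X)\|^2=\|X\|^2-\tr(X)^2/d$ and unitary invariance.

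The real difference is in how $b^*(I_d-M)a$ is bounded. The paper uses Birkhoff--von Neumann to reduce to permutation matrices, then an exchange/induction argument to show $b^*(I_d-M)a\ge\Delta_k(a)\Delta_k(b)\sum_{i\le k<j}m_{ij}$ for one fixed $k$ at a time. It then averages over $k\in[d-1]$, and this averaging is exactly where the factor $\frac{2}{d-1}$ comes from. Your layer-cake (Abel summation) expansion instead gives $b^*(I_d-M)a=\sum_{k,l}\Delta_k(a)\Delta_l(b)\bigl(\min(k,l)-|\langle U_k,W_l\rangle|^2\bigr)$ with every summand nonnegative. So all $k$ are captured at once, and you obtain the angular term with coefficient $2$ instead of $\frac{2}{d-1}$, with no convexity or exchange argument. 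This gain would carry over to the later definition of $\lambda_{i,k}$, which could drop its factor $\frac{1}{\alpha_{\max}}$.

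Your handling of the primary term is also more robust. The paper's proof of its projection-spread lemma uses $a_i-a_{i+1}\ge 0$, which need not hold for the unsorted vector $a-b$. Your estimate $\sum_{k}(c_k-c_{k+1})^2\le 4\|c\|^2$ for $c=p(a-b)$ holds for arbitrary vectors and so justifies exactly the step needed.

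Deriving the second inequality from the first via $B\mapsto -B$ is the same as in the paper.
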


The proof is based on the following Proposition, which is much stronger than \Cref{lem:difference_of_hermitians_bound} but less useful in calculations.

\begin{proposition}
\label{Lemma1}
    Let $A,B \in \bC^{d\times d}$ be hermitian matrices with eigenvalues $a_1\geq \ldots \geq a_d$ and $b_1\geq \ldots \geq b_d$ as well as eigenvectors $u_1, \ldots, u_d$ and $w_1, \ldots, w_d$ respectively. Then we have 
    \[ \| p(A-B) \|^2 = \underbrace{\|p(a-b)\|^2}_{\text{primary term}} + \underbrace{2 \sum_{i=1}^d a_ib_i - 2\sum_{i=1}^d \sum_{j=1}^d |\langle u_i,w_j \rangle|^2 a_ib_j}_{\text{secondary term}}. \]

    \begin{proof}
        Both sides of the equation are invariant under unitary transformations so we can assume that $A$ is a diagonal matrix with its eigenvalues in decreasing order on the diagonal meaning $u_i=e_i$ for $i\in [d]$. Let $\lambda_{i,j}:=|\langle e_i,w_j\rangle |^2$, then we can calculate the diagonal entries of $B$ as follows
        \[ e_i^* B e_i = \sum_{j=1}^d e_i^* B w_j w_j^* e_i = \sum_{j=1}^d b_j e_i^* w_j w_j^* e_i = \sum_{j=1}^d |\langle e_i,w_j\rangle |^2 b_j = \sum_{j=1}^d \lambda_{i,j} b_j \]
        where we used that $\sum_j w_jw_j^* = I_d$. We also know that $\sum_{i,j} |e_i^*Be_j|^2 = \|B\|^2 = \sum_i b_i^2$, thus the sum of the squares of the absolute values of the off-diagonal entries of $A-B$ is the same as the sum of the squares of the absolute values of the off diagonal entries of $B$, which is equal to
        \[ \sum_{i\neq j} |e_i^*Be_j|^2 = \sum_{i=1}^d b_i^2 - \sum_{i=1}^d \left(\sum_{j=1}^d \lambda_{i,j} b_j\right)^2 \]
        The diagonal entries of $A-B$ are $a_i-\sum_j \lambda_{i,j} b_j$, thus the sum of the squares of the absolute values of the diagonal entries of $p(A-B)$ is equal to
        \[ \sum_{i=1}^d \left(a_i-\sum_{j=1}^d \lambda_{i,j} b_j\right)^2 - \frac{1}{d} (\tr(A)-\tr(B))^2 = \sum_{i=1}^d \left( a_i^2 - 2 \sum_{j=1}^d \lambda_{i,j} a_i b_j + \left(\sum_{j=1}^d \lambda_{i,j} b_j\right)^2 \right) - \frac{1}{d} (\tr(A)-\tr(B))^2.\]
        Summing up the two terms yields the first step in the following equation
        \begin{align*}
            \|p(A-B)\|^2 &= \sum_{i=1}^d (a_i^2+b_i^2) - \frac{1}{d} (\tr(A)-\tr(B))^2 - 2 \sum_{i=1}^d\sum_{j=1}^d \lambda_{i,j} a_ib_j\\
            &= \sum_{i=1}^d (a_i-b_i)^2 +2\sum_{i=1}^d a_ib_i - \frac{1}{d} \Big(\sum_{i=1}^d (a_i-b_i)\Big)^2 - 2 \sum_{i=1}^d\sum_{j=1}^d \lambda_{i,j} a_ib_j\\
            &= \sum_{i=1}^d \Big(a_i-b_i - \frac{1}{d} \Big(\sum_{j=1}^d (a_j-b_j)\Big)\Big)^2 +2\sum_{i=1}^d a_ib_i - 2 \sum_{i=1}^d\sum_{j=1}^d \lambda_{i,j} a_ib_j\\
            &=\|p(a-b)\|^2 + 2\sum_{i=1}^d a_ib_i - 2 \sum_{i=1}^d\sum_{j=1}^d \lambda_{i,j} a_ib_j. \qedhere
        \end{align*}
    \renewcommand{\qedsymbol}{}
    \end{proof}
\end{proposition}

\begin{remark}
    We call the summand $\|p(a-b)\|^2$ the "primary term" and the other summand the "secondary term". It should be noted that both terms are non-negative. For the primary term, this is obvious and for the secondary term, this can be deduced from \Cref{rest_term_bound} below.
\end{remark}

The next two lemmata give us bounds for the primary and secondary term respectively. 

\begin{lemma}
\label{projection-spread_inequality}
    Let $a\in \bR^d$ then we have
    \[ \|p(a)\|^2 \geq \frac{1}{4} \sum_{i=1}^{d-1} \Delta_i(a)^2.\]

    \begin{proof}
        We denote $A:=\sum_{i=1}^d a_i$ and use the fact that
        \[\|p(a)\|^2 = \sum_{i=1}^d (a_i-A)^2 \geq (a_1-A)^2 + (a_d-A)^2 \geq  \frac{(a_1-a_d)^2}{4}.\]
        We further have $a_1-a_d = \sum_{i=1}^{d-1}(a_i-a_{i+1})$ and since $a_i-a_{i+1}\geq 0$ for $i\in [d-1]$ we obtain
        \[ (a_1-a_d)^2 = \Big(\sum_{i=1}^{d-1}(a_i-a_{i+1})\Big)^2 \geq \sum_{i=1}^{d-1}(a_i-a_{i+1})^2. \qedhere\]
    \end{proof}
\end{lemma}

\begin{lemma}
\label{rest_term_bound}
    Let $M=(m_{i,j})\in \bR^{d\times d}$ be doubly stochastic and let $a,b\in \bR^d$ with $a_1\geq \ldots \geq a_d$ and $b_1\geq \ldots \geq b_d$. For any index $k\in [d]$ we have
    \[ b^* (I_d-M) a \geq (a_k-a_{k+1})(b_k-b_{k+1}) \cdot \sum_{i=1}^{k} \sum_{j=k+1}^d m_{i,j}.\]
    \begin{proof}
        By the Birkhoff-von Neumann theorem \cite[Theorem 8.6]{BvN} any doubly stochastic matrix is a convex combination of permutation matrices. Since both sides of the inequality which we want to show are linear in $M$, it suffices to prove the inequality for permutation matrices, so we assume $M$ to be a permutation matrix in the rest of the proof. 
        
        We begin by showing nonnegativity of $b^*(I_d-M) a$. The linear function $b^* (I_d-M) a$ attains its minimum on the set of permutation matrices at some point $M$. We claim that the minimum is attained at $M=I_d$. Otherwise we could choose $i\in [d]$ minimal such that $m_{i,i}=0$. For such an $i$ we find indices $k,l>i$ such that $m_{k,i}=m_{i,l}=1$ which in turn implies $m_{k,l}=0$. We define $M_{i_1,j_1,i_2,j_2} := E_{i_1,j_1} + E_{i_2,j_2} - E_{i_1,j_2} - E_{i_2,j_1}$ for $i_1,j_1,i_2,j_2\in [d]$ and in case $i_1 > i_2$ and $j_1 > j_2$ we see
        \begin{align*}
             b^* M_{i_1,j_1,i_2,j_2} a &=a_{i_1}b_{j_1} + a_{i_2}b_{j_2} - a_{i_1}b_{j_2} - a_{i_2}b_{j_1} \\
            &= (a_{i_1}-a_{i_2})(b_{j_1}-b_{j_2})\geq 0. \tag{$\ast$}
        \end{align*}
        Now we can replace $M$ with $M':=M+M_{i,i,k,l}$ (one can check that the latter is still a permutation matrix) and see $b^* (I_d-M') a \leq b^* (I_d-M) a$ by $(\ast)$. Now $M'$ has ones on the diagonal entries $\{1,\ldots i\}$ and we can proceed inductively, showing that $b^* (I_d-M) a$ attains its minimum at the identity. So in particular $b^* (I_d-M) a \geq 0$ for all permutation matrices matrices $M\in \bR^{d\times d}$.

        Now we prove the inequality by induction on the integer $N:=\sum_{i=1}^{k} \sum_{j=k+1}^d m_{i,j} \in \bN$. The induction start $N=0$ is just nonnegativity, which we showed before. Now assume we have shown the inequality for $N\in \bN$ and let $M$ be a permutation matrix such that $\sum_{i=1}^{k} \sum_{j=k+1}^d m_{i,j} = N+1$. We notice that 
        \[\sum_{i=k+1}^{d} \sum_{j=1}^k m_{i,j} = k-\sum_{i=1}^{k} \sum_{j=1}^k m_{i,j} = \sum_{i=1}^{k} \sum_{j=k+1}^d m_{i,j} = N+1.\]
        Thus there exist indices $i_1,j_1 \in [k]$ and $i_2,j_2 \in \{k+1,\ldots, d\}$ such that $m_{i_1, j_2}=m_{i_2,j_1}=1$. One can check that the matrix $M':=M+M_{i_1,j_1,i_2,j_2}$ is a permutation matrix with $\sum_{i=1}^{k} \sum_{j=k+1}^d m'_{i,j}=N$ (in fact we obtain $M'$ from $M$ by switching the rows $i_1$ and $i_2$) and we obtain
        \begin{align*} 
            b^*(I_d-M)a &= b^*(I_d-M')a +b^*M_{i_1,j_1,i_2,j_2}a\\
            &= b^*(I_d-M')a + (a_{i_1}-a_{i_2})(b_{j_1}-b_{j_2}) \tag{$\ast$}\\
            &\geq b^*(I_d-M')a + (a_{k}-a_{k+1})(b_{k}-b_{k+1})\\
            &\geq (N+1)(a_{k}-a_{k+1})(b_{k}-b_{k+1}). \tag{induction hypothesis}
        \end{align*}
        This finishes the induction and we are done.
    \end{proof}
\end{lemma}

\begin{proof}[Proof of \Cref{lem:difference_of_hermitians_bound}]
        From \Cref{Lemma1} we have
        \[ \|p(A-B)\| = \|p(a-b)\|^2+ 2\sum_{i=1}^d a_{i}b_i - 2 \sum_{i=1}^d \sum_{j=1}^d |\langle u_i,w_j\rangle|^2a_ib_j =\|p(a-b)\|^2 + 2 b^*(I_d-M)a,\]
        where $M$ is the matrix with entries $m_{i,j}=|\langle u_i,w_j\rangle|^2$. Since $u_1,\ldots, u_d$ and $w_1,\ldots, w_d$ are orthonormal bases, we see that $M$ is doubly stochastic. By \Cref{rest_term_bound} we obtain
        \[ 2 b^*(I_d-M)a \geq 2(a_k-a_{k+1})(b_k-b_{k+1}) \sum_{i=1}^k \sum_{j=k+1}^{d} |\langle u_i,w_j\rangle|^2.\]
        This fact together with \Cref{projection-spread_inequality} yields the first inequality. For the second inequality observe that the spectrum $b'$ of $-B$ is $b_1'=-b_d, \ldots, b_d'=-b_1$. Thus we obtain $\Delta_k(b') = b_k'-b'_{k+1} = b_{d-k} - b_{d-k+1} = \Delta_{d-k}(b)$. Since $W_{d-k}$ is the span of the first $d-k$ eigenvectors of $B$, which is the same as the span of the last $d-k$ eigenvectors of $-B$, which is the orthogonal complement of the first $d-k$ eigenvectors of $-B$, the second inequality follows from the first one.
    \end{proof}

\subsection{Proof of the main theorem}


In this section we will prove \Cref{mainresult}. We will restrict ourselves to the case when $Q$ is of type $\hat{A}$ and treat $A$ tyope quivers by embedding them into a $\hat{A}$ type quiver. Recall our labeling of the arrows and vertices of $\hat{A}_n$
\[ Q_0,Q_1:=\bZ/n\bZ, \text{ with } (ha,ta)=\left\{ \begin{array}{cl}
    (a,a+1) & ,\text{when $a$ points right}  \\
    (a+1,a) & ,\text{when $a$ points left}
\end{array} \right. \text{ for $a\in Q_1$}.\] 
For the remainder of the section we will fix a few terms. Let $Q$ be a quiver of type $\hat{A}_n$ and let $\alpha$ be a dimension vector. Let $X\in \Rep_\alpha(Q)$ be an unstable representation of $Q$.\\
We define the \emph{head} and \emph{tail flag} as follows. Let $i\in Q_1$ index an arrow, we define
\[  \bH(X)^{i}_k \text{ is the span of the first k eigenvectors of } X_iX_i^*.\]
\[  \bT(X)^{i}_k \text{ is the span of the first k eigenvectors of } X_i^*X_i.\]
In case some singular values of $X_i$ are the same, these might not be well defined but in this case we simply choose some set of eigenvectors. Further, this case will be handled separately later. We also define the left and right flags as follows: Any vertex $i\in Q_1$ has arrows indexed by $i-1,i$ to its left and right respectively. We define
\begin{align*}
    \bL(X)_k^i &:= \left\{ \begin{array}{ll}
    \bH(X)_k^{i-1} & ,\text{ if $i-1$ points right} \\
    (\bT(X)_{\alpha(i)-k}^{i-1})^\perp & ,\text{ if $i-1$ points left}
\end{array} \right.\\
\bR(X)_k^i &:= \left\{ \begin{array}{ll}
    (\bT(X)_k^i)^\perp & ,\text{ if $i$ points right} \\
    \bH(X)_{\alpha(i)-k}^i & ,\text{ if $i$ points left} 
\end{array} \right.
\end{align*}
\begin{remark}
    Note that $\dim(\bL(X)_{k}^i) = k$ while $\dim(\bR(X)_{k}^i) = \alpha(i) - k$. Thus we have $\dim(\bL(X)_{k}^i) + \dim(\bR(X)_{k}^i) = \alpha(i)$ and we may apply \cref{lem:3_subspace_inequality} to attain bounds on $|\langle \bL(X)_{k}^i, \bR(X)_{k}^i\rangle|^2$. 
\end{remark}
Finally we define
\[\begin{array}{ll}
    s_{i,k} := \Delta_k(\spec(X_iX_i^*)) & \text{, for }i\in Q_1, k\in [\alpha_{\max}], \\
    \lambda_{i,k} := \frac{8}{\alpha_{\max}} |\langle \bL(X)_k^i, \bR(X)_k^i \rangle|^2 & \text{, for } i\in Q_0, k\in[\alpha(i)-1].
\end{array}\]
With this notation we can rephrase \Cref{lem:difference_of_hermitians_bound} as follows:

\begin{lemma}
\label{difference_of_hermitians_bound_specified}
    Let $i\in Q_0$. If $i$ is neither a source nor a sink, we have 
    \[  4\|\mu_i(X)\|^2 \geq \sum_{k=1}^{\alpha(i)-1} (s_{i-1,k}- s_{i,k})^2 + \lambda_{i,k} s_{i-1,k}s_{i,k}. \]
    In case $i$ is a source or a sink we have
    \[  4\|\mu_i(X)\|^2 \geq \sum_{k=1}^{\alpha(i)-1} (s_{i-1,k}- s_{i,\alpha(i)-k})^2 + \lambda_{i,k} s_{i-1,k}s_{i,\alpha(i)-k}. \]
\end{lemma}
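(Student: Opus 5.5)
The plan is to derive each case directly from \Cref{lem:difference_of_hermitians_bound}, applied at $d=\alpha(i)$. Write $\mu_i(X)=\pm p(A\pm B)$ with $A$ coming from arrow $i-1$ and $B$ from arrow $i$, depending on the four orientation patterns of the two arrows at $i$. Then identify three things:
\begin{itemize}
\item the spectral gaps $\Delta_k$ of $A,B$ with the $s$'s;
\item the eigenflags $U_k,W_k$ with $\bL(X)^i_k,\bR(X)^i_k$;
\item the constant $\frac{2}{\alpha(i)-1}\geq \frac{2}{\alpha_{\max}}$. Multiplying by $4$ turns it into $\frac{8}{\alpha_{\max}}$, i.e.\ into $\lambda_{i,k}$.
\end{itemize}
Since every summand on the right-hand side of \Cref{lem:difference_of_hermitians_bound} is nonnegative (primary and secondary terms), replacing $\frac{2}{\alpha(i)-1}$ by the smaller constant only weakens the bound.

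A preliminary remark concerns the $s$'s. The nonzero eigenvalues of $X_j^*X_j$ and $X_jX_j^*$ coincide, so after padding both spectra with zeros, $\Delta_k(\spec(X_j^*X_j))=\Delta_k(\spec(X_jX_j^*))=s_{j,k}$ for all $k$ in the relevant range. Thus I may use whichever of the two Gram matrices appears at vertex $i$. I will also use the identity $|\langle U^\perp,W^\perp\rangle|^2=|\langle U,W\rangle|^2$ when $\dim U+\dim W=d$, established in the proof of \Cref{lem:3_subspace_inequality}. Finally, if $M$ is hermitian, the span of the first $k$ eigenvectors of $-M$ is the orthogonal complement of the span of the first $d-k$ eigenvectors of $M$, and $\Delta_k(\spec(-M))=\Delta_{d-k}(\spec(M))$.

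\textbf{Neither source nor sink.} First let both arrows point right. Then $\mu_i=p(X_{i-1}X_{i-1}^*-X_i^*X_i)$. Apply the first inequality with $A=X_{i-1}X_{i-1}^*$ and $B=X_i^*X_i$. This gives $U_k=\bH^{i-1}_k=\bL^i_k$ and $W_k^\perp=(\bT^i_k)^\perp=\bR^i_k$, with gaps $s_{i-1,k}$ and $s_{i,k}$, which is exactly the claim.

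Next let both arrows point left. Then $\mu_i=p(X_iX_i^*-X_{i-1}^*X_{i-1})$. Apply the first inequality with $A=-X_{i-1}^*X_{i-1}$ and $B=-X_iX_i^*$, using $\|p(A-B)\|=\|p(B-A)\|$. Now $U_k=(\bT^{i-1}_{\alpha(i)-k})^\perp=\bL^i_k$ and $W_k^\perp=\bH^i_{\alpha(i)-k}=\bR^i_k$. The gaps become $\Delta_k(a)=s_{i-1,\alpha(i)-k}$ and $\Delta_k(b)=s_{i,\alpha(i)-k}$.

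\textbf{Sink.} Arrow $i-1$ points right and arrow $i$ points left, so $\mu_i=p(X_{i-1}X_{i-1}^*+X_iX_i^*)$. The second inequality applies directly with $U_k=\bH^{i-1}_k=\bL^i_k$ and $W_{\alpha(i)-k}=\bH^i_{\alpha(i)-k}=\bR^i_k$. The gaps are $s_{i-1,k}$ and $s_{i,\alpha(i)-k}$, as stated.

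\textbf{Source.} Here $\mu_i=-p(X_{i-1}^*X_{i-1}+X_i^*X_i)$. The second inequality gives the term $|\langle \bT^{i-1}_k,\bT^i_{\alpha(i)-k}\rangle|^2$. Taking orthogonal complements (the dimensions sum to $\alpha(i)$) and reindexing $k\mapsto\alpha(i)-k$ turns this into $|\langle \bL^i_k,\bR^i_k\rangle|^2$. The gaps become $s_{i-1,\alpha(i)-k}$ and $s_{i,k}$, the source-case pattern up to the same relabeling as in the left–left case.

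\textbf{Main obstacle.} The step I expect to need care is the index bookkeeping in the left–left and source cases. In those cases, \Cref{lem:difference_of_hermitians_bound} naturally produces the flag pair $(\bL^i_k,\bR^i_k)$ together with spectral gaps indexed by $\alpha(i)-k$ rather than $k$. The flags $\bL,\bR$ were defined with complements exactly so that the angle term is always $|\langle\bL^i_k,\bR^i_k\rangle|^2$. I will therefore reindex the whole sum via $k\mapsto\alpha(i)-k$ and check that it matches the stated form. If it does not match literally, I will state the bound in the form with reflected $s$-indices. That form is equivalent for the later use in the graph $G$, where only the pairing of the two gaps with the corresponding $\lambda$ matters.

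The case where eigenvalues are repeated is harmless: \Cref{lem:difference_of_hermitians_bound} holds for any choice of orthonormal eigenbases.
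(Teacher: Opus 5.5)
Your route is the paper's. The paper gives no separate argument and presents this lemma as a rephrasing of \Cref{lem:difference_of_hermitians_bound}, which is exactly the four-case application you carry out. The right--right and sink cases come out verbatim, and your flags and gaps in the left--left and source cases are also computed correctly.

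What fails is the proposed reindexing $k\mapsto\alpha(i)-k$. It reflects the index of $\lambda_{i,k}$ together with the $s$-indices, so what your computation actually yields is
\[
\sum_{k=1}^{\alpha(i)-1}(s_{i-1,k}-s_{i,k})^2+\lambda_{i,\alpha(i)-k}\,s_{i-1,k}s_{i,k}
\quad\text{resp.}\quad
\sum_{k=1}^{\alpha(i)-1}(s_{i-1,k}-s_{i,\alpha(i)-k})^2+\lambda_{i,\alpha(i)-k}\,s_{i-1,k}s_{i,\alpha(i)-k}
\]
in the left--left resp.\ source case. It never gives $\lambda_{i,k}$ paired with the stated gaps.

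No other argument can close this, because the statement as written is false in these two cases.
\begin{itemize}
\item \textbf{Left--left vertex.} Take $\alpha(i)=3$, $X_iX_i^*=\diag(1,2\epsilon,\epsilon)$ and $X_{i-1}^*X_{i-1}=\diag(1,\epsilon,2\epsilon)$. With, say, $\alpha(i\pm1)=4$ and all other arrows zero, $X$ is unstable. Then $4\|\mu_i(X)\|^2=8\epsilon^2$. On the other hand $\bL^i_1=(\bT^{i-1}_2)^\perp=\Span(e_2)\subseteq\bH^i_2=\bR^i_1$, so $\lambda_{i,1}=8/\alpha_{\max}$. Since $s_{i-1,1}=s_{i,1}=1-2\epsilon$, the stated right-hand side is at least $\frac{8}{\alpha_{\max}}(1-2\epsilon)^2$.
\item \textbf{Source.} Similarly, $X_{i-1}^*X_{i-1}=\diag(1,2\epsilon,\epsilon)$ and $X_i^*X_i=\diag(0,1,1-\epsilon)$ give $4\|\mu_i(X)\|^2=\frac{32}{3}\epsilon^2$. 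Here $\bL^i_1=\Span(e_3)\subseteq\Span(e_1,e_3)=\bR^i_1$, so the stated $k=1$ term is $\frac{8}{\alpha_{\max}}(1-2\epsilon)(1-\epsilon)$.
\end{itemize}

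So commit to the reflected form you derived, and present it as a correction of the statement, not as an equivalent reindexing. Equivalently, at left--left vertices and sources the edge $a_{i,k}$ of $G$ must carry $\lambda_{i,\alpha(i)-k}$.

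The correction costs nothing later. In the proof of \Cref{prop:component_lower_bound} the paper in fact uses your pairing, e.g.\ $\lambda_{i,k_{i-1}}=\frac{8}{\alpha_{\max}}|\langle\bT^{i-1}_{k_{i-1}},\bT^{i}_{k_i}\rangle|^2$ at a source. That is what \Cref{lem:difference_of_hermitians_bound} produces, not what the definition of $\bL,\bR$ gives.
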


In order to keep track of the summands in the last lemma, when summing over $\|\mu_i(X)\|^2$ we construct a graph $G=(V,E)$ from the quiver $Q$ and dimension vector $\alpha$. Let
\[ V:= \{ (i,k) \;|\; i\in Q_1, k\in [\alpha_{\max}]\}\]
For any vertex $i\in Q_0$ we add arrows $a_{j,k}$ for $k\in [\alpha(j)]$ as follows:\\
If $i$ is neither a sink nor a source we let $ha_{i,k}:=(i,k)$ and $ta_{i,k}:=(i-1,k)$\\
If $i$ is a sink or a source we let $ha_{i,k}:=(i,\alpha(i)-k)$ and $ta_{i,k}:=(i-1,k)$\\
To this graph we associate a function $f_{G}:\bR^V \times \bR^E \to \bR$ given by
\[f_{G}(s,\lambda) := \frac{\sum_{a\in E} (s_{ha}-s_{ta})^2 + \lambda_{a} s_{ha} s_{ta}}{(\sum_{v\in V} s_v)^2}.\]

\begin{example}
Consider the type $\hat{A}_4$ quiver with dimension vector $\alpha = (2,4,5,3)$
\[\begin{tikzcd}
    1 & 2 & 3 & 4
    \arrow["1" ,from=1-1, to=1-2]
    \arrow["4", bend left=30, from=1-1, to=1-4]
    \arrow["2"', from=1-3, to=1-2]
    \arrow["3", from=1-3, to=1-4]
\end{tikzcd}.\]
The corresponding graph $G$ looks as follows, where the zero vertices are those, such that $s_{i,k}$ vanishes for all representations $X\in \Rep_\alpha(Q)$.
\[\begin{tikzcd}
	{(1,1)} & {(2,1)} & {(3,1)} & {(4,1)} \\
	{(1,2)} & {(2,2)} & {(3,2)} & {(4,2)} \\
	0 & {(2,3)} & {(3,3)} & 0 \\
	0 & {(2,4)} & 0 & 0 \\
	0 & 0 & 0 & 0
	\arrow[from=1-1, to=3-2]
	\arrow[from=1-2, to=4-3]
	\arrow[from=1-3, to=2-4]
	\arrow[bend right=30, from=1-4, to=1-1]
	\arrow[from=2-1, to=2-2]
	\arrow[from=2-2, to=3-3]
	\arrow[from=2-3, to=1-4]
	\arrow[from=3-1, to=1-2]
	\arrow[from=3-2, to=2-3]
	\arrow[from=4-2, to=1-3]
\end{tikzcd}\]
\end{example}

With this construction we obtain the following bound on the norm of the moment map

\begin{lemma}
\label{prop:moment_graph_lower_bound}
    We have
    \[ \norm{\mu(X)}^2 \geq \frac{1}{4\alpha_{\max}^2} f_{G}(s,\lambda).\]

    \begin{proof}
        Recall the formula for the norm of the moment map from \Cref{momentmap_formula}
        \[ \|\mu(X)\|^2 = \frac{\sum_{v\in Q_0} \|\mu_v(X)\|^2}{(\sum_{a\in Q_1} \tr(X_a^*X_a))^2} = \frac{\sum_{i=1}^n \|\mu_i(X)\|^2}{(\sum_{i=1}^{n} \tr(X_i^*X_i))^2}.\]
        For the denominator we see
        \[ \sum_{i=1}^{n} \tr(X_i^*X_i) = \sum_{i=1}^{n} \sum_{k=1}^{\min(\alpha(i), \alpha(i+1))} \spec(X_i^*X_i))_k = \sum_{i=1}^n \sum_{k=1}^{\min(\alpha(i), \alpha(i+1))} k s_{i,k} \leq \alpha_{\max} \sum_{v\in V} s_v. \tag{$\ast$}\]
        For the numerator we utilize the bounds from \Cref{difference_of_hermitians_bound_specified}. Let $i\in Q_0$ be some vertex and recall that its left and right arrows are labeled $i-1,i$ respectively. In case the vertex $i$ is neither a sink nor a source, by \Cref{difference_of_hermitians_bound_specified} we have
        \[ 4 \|\mu_i(X)\|^2 \geq \sum_{k=1}^{\alpha(i)-1} (s_{i-1,k}- s_{i,k})^2 + \lambda_{i,k} s_{i-1,k}s_{i,k} = \sum_{k=1}^{\alpha(i)-1} (s_{ta_{i,k}}- s_{ha_{i,k}})^2 + \lambda_{a_{i,k}} s_{ta_{i,k}}s_{ha_{i,k}}. \]
        In case the vertex $i$ is a sink or a source, by \Cref{difference_of_hermitians_bound_specified} we have
        \[ 4 \|\mu_i(X)\|^2 \geq \sum_{k=1}^{\alpha(i)-1} (s_{i-1,k}- s_{i,\alpha(i)-k})^2 + \lambda_{i,k} s_{i-1,k}s_{i,\alpha(i)-k} = \sum_{k=1}^{\alpha(i)-1} (s_{ta_{i,k}}- s_{ha_{i,k}})^2 + \lambda_{a_{i,k}} s_{ta_{i,k}}s_{ha_{i,k}}. \]
        In both cases the sum on the right hand side corresponds exactly to the arrows $a_{i,k}\in E$ for $k\in [\alpha(i)-1]$. Summing up over all $i\in [n]$ yields
        \[ 4\sum_{i=1}^n \|\mu_i(X)\|^2 \geq f_{G}(s,\lambda) \cdot \Big(\sum_{v\in V} s_v\Big)^2. \]
        Dividing by $(\ast)$ yields the result.
    \end{proof}
\end{lemma}

    

We continue by lower bounding $f_{G}(s,\lambda)$ in terms of its connected components. The \emph{support of $s$} is defined as
\[ \supp(s) := \{ v\in V \;|\; s_v > 0 \}. \]
For a connected component $(V',E')=H\subseteq G$ we define $\lambda_{|H} \in \bR_{\geq 0}^{E'}$ and $s_{|H} \in \bR_{\geq 0}^{V'}$ as the appropriate restrictions of $\lambda$ and $s$ respectively.
We will usually write $f_{H}(s,\lambda)$ instead of $f_{H}(s_{|H},\lambda_{|H})$, since the restriction can be deduced from the index $H$.




\begin{lemma}
\label{lem:graph_component_inequality}
    Let $H_1,\ldots, H_m$ be the connected components of $G$ such that $\supp(s)\cap H_i\neq \emptyset$. If $f_{H_i}(s,\lambda)\neq 0$ for all $i\in [m]$, we have
    \[ f_{G}(s,\lambda) \geq \Big( \sum_{i=1}^m (f_{H_i}(s,\lambda))^{-1} \Big)^{-1}. \]
    
    \begin{proof}
        Denote $t_i:=\sum_{v\in V_i} s_v$. We have $t_i> 0$ if and only if $\supp(s)\cap H_i\neq \emptyset$. Since $H_1=(V_1,E_1),\ldots, H_n=(V_n,E_n)$ are the connected components of $G$ we obtain
        \[ f_{G}(s,\lambda) = \frac{\sum_{i=1}^n \sum_{e\in E_i} (s_{he}-s_{te})^2 + \lambda_{e} s_{he} s_{te}}{(\sum_{i=1}^n \sum_{v\in V_i} s_v)^2} = \frac{\sum_{i=1}^m t_i^2 \cdot f_{H_i}(s,\lambda)}{(\sum_{i=1}^m t_i)^2}.\]
        The result follows by applying \Cref{minlemma4}.
    \end{proof}
\end{lemma}

The previous lemma shows us, that it suffices to investigate the connected components of $G$. We collect some of their properties in the next lemma.

\begin{lemma}
\label{lem:G_Q_properties}
    Let $H=(V',E')$ be a connected component of $G$. It has the following properties.
    \begin{enumerate}
        \item $H$ is a graph of type $A$ or $\hat{A}$. More precisely it consists of vertices $(m,k_m), (m+1,k_{m+1}),\ldots (M,k_{M})$ subject to
        \[ k_{i+1} = \left\{ \begin{array}{ll}
         k_i & \text{, if $i+1$ is neither a source nor a sink} \\
         \alpha_{i+1}-k_i & \text{, if $i+1$ is either a source or a sink}
    \end{array} \right., \]
        with edges between adjacent vertices in the list, as well as an edge between $(M,k_M)$ and $(m,k_m)$ in case $H$ is of type $\hat{A}$.
        \item If $H$ is a singleton $(m,k)$, we have $s_{m,k}=0$.
        \item If $H$ is of type $\hat{A}$ we have $|H|=n$ and
        \[ \sum_{\substack{i=1\\i:\text{ sink}}}^n \alpha(i) = \sum_{\substack{i=1\\i:\text{ source}}}^n \alpha(i),\]
        where both sums range over the vertices of $Q$.
        \item If $H$ is of type $A$ with $E'\subseteq \supp(s)$, we have $k_m = \alpha(m)$ and $k_M=\alpha(M+1)$. In addition, we have 
        \[ \sum_{\substack{i=m+1\\i:\text{source}}}^{M} \alpha(i) - \sum_{\substack{i=m+1\\i:\text{sink}}}^{M} \alpha(i) = a \alpha(m) + b \alpha(M+1),\]
        where
        \[ a=\left\{ \begin{array}{cl}
            1 & ,\text{ if the arrow $m$ points left} \\
            -1 &,\text{ if the arrow $m$ points right}
        \end{array} \right. \quad b=\left\{ \begin{array}{cl}
            1 & ,\text{ if the arrow $M$ points right} \\
            -1 & ,\text{ if the arrow $M$ points left}
        \end{array} \right.\]
    \end{enumerate}
    \begin{proof}
        \begin{enumerate}
            \item By definition a vertex $(i,k)$ can only connect to at most one of the vertices $(i-1,k)$ or $(i-1, \alpha_i-k)$ and at most one of the vertices $(i+1,k)$ or $(i+1, \alpha_{i+1}-k)$ depending on the orientation of the arrows in $Q$. It follows, that each vertex in $H$ has at most degree two, so $H$ is of type $A$ or $\hat{A}$. The structural result holds by definition of the graph $G$.

            \item If $H$ is a singleton $(m,k)$ we know $k\geq \alpha(m+1)$, since it would otherwise have an outgoing arrow $a_{m+1,k}\in E$. Similarly we know $k\geq \alpha(m)$ since it would otherwise have an incoming arrow $a_{m,k}\in E$ or $a_{m,\alpha(m)-k}\in E$ depending on whether the vertex $m$ is a sink or a source in $Q$ or not. The matrix $X_m$ is an $\alpha(m+1)\times \alpha(m)$ matrix, thus $X_m^*X_m\in \bC^{\alpha(m)\times\alpha(m)}$ has rank at most $\min(\alpha(m),\alpha(m+1))$. In case $k>\min(\alpha(m),\alpha(m+1))$ we obtain
            \[ s_{m,k} = \Delta_k(\spec(X_m^*X_m)) = \spec_k(X_m^*X_m)-\spec_{k+1}(X_m^*X_m) = 0.\]
            The last case is when $k=\alpha(m)=\alpha(m+1)$ in which case $X_m$ is a square matrix. Then $\det(X_m)\in \bC[\Rep_\alpha(Q)]$ is an $\SL_\alpha$ invariant and since $X\in \Rep_\alpha(Q)$ is unstable we obtain $\det(X_m)=0$ and thus $\rank(X_m^*X_m)\leq k-1$. As above this yields $s_{m,k}=0$. 

            \item Assume $H$ is of type $\hat{A}$. Since there is an arrow between $(M,k_{M})$ and $(m,k_m)$ we must have $m \equiv M+1 \mod n$ as well as $k_{M+1}=k_m$. From the recursive definition of $k$ we see
            \[ k_{M+1} = k_m \pm \left(\sum_{\substack{i=m+1\\i:\text{source}}}^{M+1} \alpha(i) - \sum_{\substack{i=m+1\\i:\text{sink}}}^{M+1} \alpha(i) \right).\]
            where the $\pm$ sign depends on whether the arrow $m$ points right or left. The result follows from the fact that $k_{M+1}=k_m$ and the set $\{m+1,\ldots, M+1\}$ ranges over every vertex in $Q$.

            \item Because the vertices $(m,k_m)$ and $(M,k_M)$ are boundary vertices, we must have $k_m\geq \alpha(m)$ and $k_M\geq \alpha(M)$ since they would otherwise have an incoming and outgoing arrow respectively. By assumption we have $s_{m,k_m}, s_{M,k_M} > 0$ and therefore
            \[ \spec_{k_m}(X_m^*X_m),\spec_{k_M}(X_M^*X_M) > 0,\]
            which implies $k_m = \alpha(m)$ and $k_M = \alpha(M+1)$ respectively. We use the recursive definition of $k$ to obtain the following 4 equations
            \[ k_{M} = \left\{ \begin{array}{cl}
                k_m - \sum_{\substack{i=m+1\\i:\text{source}}}^{M} \alpha(i) + \sum_{\substack{i=m+1\\i:\text{sink}}}^{M} \alpha(i) & \text{, if $m$ and $M$ point to the left} \\
                k_m + \sum_{\substack{i=m+1\\i:\text{source}}}^{M} \alpha(i) - \sum_{\substack{i=m+1\\i:\text{sink}}}^{M} \alpha(i)& \text{, if $m$ and $M$ point to the right} \\
                - k_m + \sum_{\substack{i=m+1\\i:\text{source}}}^{M} \alpha(i) - \sum_{\substack{i=m+1\\i:\text{sink}}}^{M} \alpha(i)& \text{, if $m$ points left and $M$ points right} \\
                -k_m - \sum_{\substack{i=m+1\\i:\text{source}}}^{M} \alpha(i) + \sum_{\substack{i=m+1\\i:\text{sink}}}^{M} \alpha(i)& \text{, if $m$ points right and $M$ points left} \\
            \end{array} \right. \]
            In each case the result follows. \qedhere
        \end{enumerate}
    \end{proof}
\end{lemma}

\begin{lemma}
\label{lem:HeadTail_inequality}
    Consider a subrepresentation $Y\subseteq X$. For any arrow $i\in Q_1$ and any $k\in \bN$, such that $s_{i,k}>0$, we have
    \[|\langle \bT(X)^i_{k}, Y^{ti}\rangle|^2 \leq |\langle \bH(X)^{i}_{k}, Y^{hi}\rangle|^2.\]

    \begin{proof}
        Since $s_{i,k}>0$, we have $\spec_k(X_i^*X_i)>\spec_{k+1}(X_i^*X_i)\geq 0$. This implies $\ker(X_i)\cap \bT(X)^i_{k} = \{0\}$ so we can apply \Cref{lem:HeadTail_inequality_technique} to obtain the desired inequality.
    \end{proof}
\end{lemma}

Finally we are ready to bound $f_H(s,\lambda)$ for a connected component of $G$. For understanding the proof, we recommend focusing on the case when $H$ is of type $\hat{A}$, since it avoids much of the boundary casework.

\begin{lemma}
\label{prop:component_lower_bound}
    Let $H$ be a connected component of $G$ such that $\supp(s)\cap H\neq \emptyset$. Then the following statements hold. In case $Q$ is the oriented cycle, we have
    \[ f_{H}(s,\lambda) \geq 4 \alpha_{\max}^{-3} |H|^{-3}.\]
    In case $Q$ is of type $\hat{A}_n$ and acyclic we have
    \[ f_{H}(s,\lambda) \geq 4 \alpha_{\max}^{-2} |H|^{-3}.\]
    In case some matrix $X_i$ vanishes (this is the case when $X$ restricts to a representation of an $A_{n}$ type quiver) we have
    \[ f_H(s,\lambda) \geq 4\alpha_{\max}^{-1} |H|^{-3}. \]
    \begin{proof}
        In case $\alpha_{\max}=1$, the nullcone is empty except for zero, so we asssume $\alpha_{\max}\geq 2$. From \Cref{lem:G_Q_properties} ii) we know that $H$ cannot be a singleton since $H$ meets $\supp(s)$. By \Cref{lem:G_Q_properties} i) we know that $H$ consists of vertices $(m,k_m),\ldots, (M,k_M)$ and if any $s_{i,k_i}$ vanishes we can apply \Cref{minlemma_some_term_vanishes} to obtain
        \[ f_H(s,\lambda) \geq \frac{\sum_{i=m}^{M-1} (s_{i,k_i}-s_{i+1,k_{i+1}})^2}{(\sum_{i=m}^M s_{i,k_i})^2} \geq 3(M-m+1)^{-3} =3|H|^{-3}.\]
        We therefore assume $s_{i,k_i} > 0$ for all $i\in \{m,\ldots, M\}$. This implies that all the head and tail flags below are well defined.
        In summary, we have the following three assumptions in the remainder of the proof.
        \begin{itemize}
            \item $\alpha_{\max}\geq 2$.
            \item $H$ is not a singleton and consists of vertices $(m,k_m),\ldots, (M,k_M)$.
            \item for $i \in \{m, \ldots, M\}$ we have $s_{i,k_i} > 0$, i.e. $H \subseteq \supp(s)$.
        \end{itemize}
        \subsection*{Case 1: $H$ is of type $\hat{A}_n$ and $Q$ is not the oriented cycle}
        We begin with the case that $H$ is of type $\hat{A}_n$ and $Q$ is not the oriented cycle. We can assume w.l.o.g., that $m=1$ and $M=n$. Let $\tau$ be that weight assigning to each source the value $1$ and to each sink the value $-1$ and we see
        \[ \tau(\dim(X)) = \sum_{v \in Q_0} \tau(v) \dim(X_v) = \sum_{\substack{i=1\\i:\text{source}}}^n \alpha(i) - \sum_{\substack{i=1\\i:\text{sink}}}^n \alpha(i) = 0,\]
        by \Cref{lem:G_Q_properties} iii). Now Kings-criterion \ref{KC} implies the existence of a subrepresentation $Y\subseteq X$, such that $\tau( \dim(Y)) \geq 1$. 
        
        Let $i\in \{1,\ldots, n\}$ be some vertex. In each of the following 4 inequalities we use the definition of the left and right flags in the first step and \Cref{lem:3_subspace_inequality} in the second step. We also omit the dependence of the head and tail flags on $X$ from the notation. In case $i-1$ points left and $i$ points right , we have
        \[ \lambda_{i,k_{i-1}} =\frac{8}{\alpha_{\max}}|\langle \bT_{k_{i-1}}^{i-1}, \bT_{k_{i}}^i \rangle|^2 \geq \frac{8}{\alpha_{\max}^2} (-|\langle \bT_{k_{i-1}}^{i-1}, Y^i\rangle|^2 - |\langle Y^i, \bT_{k_{i}}^i \rangle|^2 + \dim(Y^i))^2. \]
        In case $i-1$ points right and $i$ points left, we have
        \[ \lambda_{i,k_{i-1}} = \frac{8}{\alpha_{\max}}|\langle \bH_{k_{i-1}}^{i-1}, \bH_{k_{i}}^i \rangle|^2 \geq \frac{8}{\alpha_{\max}^2} (|\langle \bH_{k_{i-1}}^{i-1}, Y^i\rangle|^2 + |\langle Y^i, \bH_{k_{i}}^i \rangle|^2 - \dim(Y^i))^2. \]
        In case $i$ and $i-1$ point right, we have
        \[ \lambda_{i,k_{i-1}} = \frac{8}{\alpha_{\max}}|\langle \bH_{k_{i-1}}^{i-1}, (\bT_{k_{i}}^i)^\perp \rangle|^2 \geq \frac{8}{\alpha_{\max}^2}(|\langle \bH_{k_{i-1}}^{i-1}, Y^i\rangle|^2 - |\langle Y^i, \bT_{k_{i}}^i  \rangle|^2)^2. \]
        In case $i$ and $i-1$ point left, we have
        \[ \lambda_{i,k_{i-1}} = \frac{8}{\alpha_{\max}}|\langle \bT_{k_{i-1}}^{i-1}, (\bH_{k_{i}}^i)^\perp \rangle|^2 \geq \frac{8}{\alpha_{\max}^2} (-|\langle \bT_{k_{i-1}}^{i-1}, Y^i\rangle|^2 + |\langle Y^i, \bH_{k_{i}}^i  \rangle|^2)^2. \]
        We notice, that for any edge $i$, the term $|\langle Y^{ti}, \bT^i_{k_i}\rangle|^2$ appears with negative sign in the inequality corresponding to the vertex $ti$ and the term $|\langle Y^{hi}, \bH^i_{k_i}\rangle|^2$ appears with positive sign in the inequality corresponding to the vertex $hi$. Furthermore the term $\dim(Y^i)$ appears with positive sign iff $i$ is a source and with negative sign iff $i$ is a sink. Thus, summing up $\lambda_{i,k_{i-1}}' := \sqrt{\frac{\alpha_{\max}^2}{8}\lambda_{i,k_{i-1}}}$ yields
        \begin{align*}
            \sum_{i=1}^{n} \lambda_{i,k_{i-1}}' &\geq \sum_{i=1}^{n} \underbrace{|\langle\bH^{i}_{k_i}, Y^{hi}\rangle|^2 - |\langle \bT^{i}_{k_i}, Y^{ti}\rangle|^2}_{\geq 0 \text{ by \Cref{lem:HeadTail_inequality}}}+ \sum_{\substack{i=1\\i:\text{source}}}^n \dim(Y^i) - \sum_{\substack{i=1\\i:\text{sink}}}^n \dim(Y^i)\\
           &\geq \sum_{\substack{i=1\\i:\text{source}}}^n \dim(Y^i) - \sum_{\substack{i=1\\i:\text{sink}}}^n \dim(Y^i)\\
           &= \tau(\dim(Y)) \geq 1,
        \end{align*}
        
        where in the second step we used \Cref{lem:HeadTail_inequality}. This now yields
        \[ \sum_{i=1}^n \lambda_{i,k_{i-1}} = \frac{8}{\alpha_{\max}^2} \sum_{i=1}^n (\lambda_{i,k_{i-1}}')^2 \geq \frac{8}{\alpha_{\max}^2n}.\]
        Now we apply \Cref{lem:minlemma5} to finish the proof
        \[ f_{H}(s,\lambda) = \frac{\sum_{i=1}^n (s_{i,k_{i}}-s_{i-1,k_{i-1}})^2 + \lambda_{i,k_{i-1}} s_{i,k_{i}}s_{i-1,k_{i-1}}}{(\sum_{i=1}^n s_{i,k_i})^2} \geq \min \left(4 \alpha_{\max}^{-2}, \frac{3}{2}\right) n^{-3} = 4 \alpha_{\max}^{-2} |H|^{-3}.\]
        \subsection*{Case 2: $H$ is of type $\hat{A}_n$ and $Q$ is the oriented cycle}
        If $Q$ is the oriented cycle, we assume w.l.o.g., that every arrow points to the right. We obtain $k_i=k_{i+1}$ for every $i\in [n]$ and we will simply denote this value as $k$. So $H$ consists of the vertices $(1,k),\ldots, (n,k)$. For every $t\in \bC$, the polynomial \[\det(X_n\cdot \ldots \cdot X_1 - tI_{\alpha(1)})\in \bC[\Rep_{\alpha}(Q)]\] is an $\SL_{\alpha}$-invariant. By the instability of $X$ we must have \[\det(X_n\cdot \ldots \cdot X_1 - tI) = \det(-tI) = (-t)^{\alpha(1)},\] thus the product $X_n\cdot \ldots \cdot X_1$ is nilpotent. Now let $v_1\in \bH_k^n$ be any vector and define $v_{i+1}:=X_iv_i$. We denote $V^i:=\Span(v_i)$ and obtain the following bound by \Cref{lem:3_subspace_inequality}
        \[ \lambda_{i,k} = \frac{8}{\alpha_{\max}}|\langle \bH_{k}^{i-1}, (\bT_{k}^i)^\perp \rangle|^2 \geq \frac{8}{\alpha_{\max}}(|\langle \bH_{k}^{i-1}, V^i\rangle|^2 - |\langle V^i, \bT_k^i\rangle|^2)^2. \]
        Similar to the argument above, we denote $\lambda_{i,k}':= \sqrt{\frac{\alpha_{\max}}{8} \lambda_{i,k}}$. From the nilpotence of $X_n \cdot \ldots \cdot X_1$, we know that $v_{n \alpha_{\max}} \in \ker(X_n)$ and since $\bT^{n}_{k}$ is spanned by eigenvectors of $X_n^*X_n$ with positive eigenvalues (since $s_{n,k}$ is assumed to be positive), we have $\bT^{n}_{k} \perp V^{n\alpha_{\max}}$ and thus
        \begin{align*}
            \sum_{i=1}^{n\alpha_{\max}} \lambda_{i,k}' &\geq |\langle\bH^{0}_{k}, V^{1}\rangle|^2 - |\langle \bT^{n\alpha_{\max}}_{k}, V^{n\alpha_{\max}}\rangle|^2 + \sum_{i=1}^{n\alpha_{\max}-1} \underbrace{|\langle\bH^{i}_{k}, V^{i+1}\rangle|^2 - |\langle \bT^{i}_{k}, V^{i}\rangle|^2}_{\geq 0 \text{ by \Cref{lem:HeadTail_inequality}}}\\
           &\geq |\langle\bH^{n}_{k}, V^{1}\rangle|^2 - |\langle \bT^{n}_{k}, V^{n\alpha_{\max}}\rangle|^2\\
           &= 1
        \end{align*}
        It follows, that
        \[ \sum_{i=1}^n \lambda_{i,k}' \geq \frac{1}{\alpha_{\max}} \]
        and we obtain
        \[ \sum_{i=1}^{n} \lambda_{i,k} \geq \frac{8}{\alpha_{\max}^3 n}.\]
        \Cref{lem:minlemma5} yields
        \[ f_{H}(s,\lambda) = \frac{\sum_{i=1}^{n} (s_{i,k}-s_{i-1,k})^2 + \lambda_{i,k} s_{i,k}s_{i-1,k}}{(\sum_{i=1}^n s_{i,k})^2} \geq \frac{4}{\alpha_{\max}^3n^3} = 4 \alpha_{\max}^{-3}|H|^{-3}.\]
        
        \subsection*{Case 3: $H$ is of type $A$}
        By \Cref{lem:G_Q_properties} we have $\alpha(m)=k_m$ and $\alpha(M+1)=k_M$. We define a weight $\tau$ as follows: For each source $i\in [n]$ we define $\tau_i:=\# \{m \leq j \leq M+1 \;|\; j\equiv i \mod n\}$ and for each sink $i\in [n]$ we define $\tau_i:=-\#\{m \leq j \leq M+1 \;|\; j\equiv i \mod n\}$. In case $m$ is neither a sink nor a source we define $\tau_m:=\pm 1$ depending on the orientation of the arrow $m$. Similarly in case $M+1$ is neither a sink nor a source we define $\tau_M:=\pm 1$ depending on the orientation of the arrow $M$. Now \Cref{lem:G_Q_properties} yields that $\tau(\dim(X))=0$ and Kings-criterion \ref{KC} yields a subrepresentation $Y\subseteq X$ with $\tau(\dim(Y))\geq 1$. We again denote $\lambda_{i,k_{i-1}}' := \sqrt{\frac{\alpha_{\max}^2}{8}\lambda_{i,k_{i-1}}}$. Here we assume that $m$ and $M$ both point right, the other three cases work similarly. We have
        \begin{align*}
            \sum_{i=m+1}^{M} \lambda_{i,k_{i-1}}' &\geq |\langle\bH^{m}_{k_m}, Y^{hm}\rangle|^2 - |\langle \bT^{M}_{k_M}, Y^{tM}\rangle|^2 + \sum_{i=m+1}^{M-1} \underbrace{|\langle\bH^{i}_{k_i}, Y^{hi}\rangle|^2 - |\langle \bT^{i}_{k_i}, Y^{ti}\rangle|^2}_{\geq 0 \text{ by \Cref{lem:HeadTail_inequality}}} + \sum_{\substack{i=m+1\\i:\text{source}}}^M \dim(Y^i) - \sum_{\substack{i=m+1\\i:\text{sink}}}^M \dim(Y^i)\\
           &\geq |\langle\bH^{m}_{k_m}, Y^{hm}\rangle|^2 - |\langle \bT^{M}_{k_M}, Y^{tM}\rangle|^2 + \sum_{\substack{i=m+1\\i:\text{source}}}^M \dim(Y^i) - \sum_{\substack{i=m+1\\i:\text{sink}}}^M \dim(Y^i) \\
           &\geq \dim(Y^m) - \dim(Y^{M+1}) + \sum_{\substack{i=m+1\\i:\text{source}}}^M \dim(Y^i) - \sum_{\substack{i=m+1\\i:\text{sink}}}^M \dim(Y^i)\\
           &= \tau(\dim(Y)) \geq 1,
        \end{align*}
        where we used the following argument in the third step. Since we have $k_m=\alpha(m)$ and $k_M=\alpha(M+1)$, we know that $\bT_{k_m}^m = \bC^{\alpha(m)}$ and $\bH_{k_M}^M = \bC^{\alpha(M+1)}$. Using \Cref{lem:HeadTail_inequality} this implies
        \begin{align*}
            |\langle\bH^{m}_{k_m}, Y^{hm}\rangle|^2 - |\langle \bT^{M}_{k_M}, Y^{tM}\rangle|^2 &\geq |\langle\bT^{m}_{k_m}, Y^{m}\rangle|^2 - |\langle \bH^{M}_{k_M}, Y^{M+1}\rangle|^2\\
            &= \dim(Y^m) - \dim(Y^{M+1}).
        \end{align*}
        As above we obtain
        \[ \sum_{i=m+1}^M \lambda_{i,k_{i-1}} \geq \frac{8}{\alpha_{\max}^2(M-m)} \geq \frac{8}{\alpha_{\max}^2 |H|}  \]
        and using \Cref{minlemma3} this yields
        \begin{align*}
            f_{H}(s,\lambda) &= \frac{\sum_{i=m+1}^M (s_{i,k_{i}}-s_{i-1,k_{i-1}})^2 + \lambda_{i,k_{i-1}} s_{i,k_{i}}s_{i-1,k_{i-1}}}{(\sum_{i=m}^M s_{i,k_i})^2}\\
            &\geq 4 \alpha_{\max}^{-2} |H|^{-3}.
        \end{align*}
        \subsection*{Case 4: $H$ is of type $A$ and some $X_i$ vanishes}
        W.l.o.g. assume that $X_n$ vanishes. We proceed exactly as in case 3, the only difference being that $Y$ splits into a direct sum of irreducible representations $Y = \bigoplus_{j} Y_j$ and since $X_n$ vanishes, $Y$ is a representation of a type $A_n$ quiver so each irreducible representation satisfies $\dim(Y_j)_{\max} = 1$. Since \[\sum_{j} \tau(\dim(Y_j)) = \tau(\dim(Y)) \geq 1,\] at least one of the summands must satisfy $\tau(\dim(Y_j)) \geq 1$. We replace $Y$ with this summand and since $\dim(Y_j)_{\max} = 1$ we may let $\lambda_{i, k_{i-1}}' := \sqrt{\frac{\alpha_{\max}}{8} \lambda_{i, k_{i-1}}}$ as in Case 2 (instead of $\lambda_{i, k_{i-1}}' := \sqrt{\frac{\alpha_{\max}^2}{8} \lambda_{i, k_{i-1}}}$)
        This improves the bound on the sum
        \[ \sum_{i=m+1}^M \lambda_{i,k_{i-1}} \geq \frac{8}{\alpha_{\max}(M-m)} \geq \frac{8}{\alpha_{\max}|H|} \]
        and by \Cref{minlemma3} 
        \[ f_{H}(s,\lambda) \geq 4\alpha_{\max}^{-1} |H|^{-3}. \qedhere\]
    \end{proof}
\end{lemma}

Finally we are ready to put everything together and prove \Cref{mainresult}.

\Mainresult*
\begin{proof}
    Let $Q$ be a quiver of type $\hat{A}_n$ and $\alpha$ a dimension vector. Let $X\in \Rep_\alpha(Q)$ be unstable. Let $H_1,\ldots, H_M$ be the connected components of $G$ ordered such that $H_1,\ldots, H_m$ are the components with $\supp(s)\cap H_i\neq \emptyset$. In case $Q$ is the oriented cycle, \Cref{prop:moment_graph_lower_bound} and \Cref{lem:graph_component_inequality} yield
    \begin{align*}
        \|\mu(X)\|^2 &\geq \frac{1}{4\alpha_{\max}^2} f_{G}(s,\lambda) \tag{ \Cref{prop:moment_graph_lower_bound}}\\
        &\geq \frac{1}{4\alpha_{\max}^2} \left( \sum_{i=1}^m f_{H_i}(s,\lambda)^{-1} \right)^{-1} \tag{\Cref{lem:graph_component_inequality}}\\
        &\geq \alpha_{\max}^{-5} \left( \sum_{i=1}^m |H_i|^{3} \right)^{-1} \tag{\Cref{prop:component_lower_bound}}\\
        &\geq \alpha_{\max}^{-6} n^{-3}. \tag{\Cref{lem:sum_of_cubes}}\\
    \end{align*}
    In the last step we used, that $|H_i| \leq n$ for any connected component $H_i$ and $\sum_{i=1}^m |H_i| \leq n \alpha_{\max}$.
    In case $Q$ is acyclic and of type $\hat{A}_n$ we obtain a different bound starting at step three.
    \begin{align*}
        \frac{1}{4\alpha_{\max}^2} \left( \sum_{i=1}^m f_{H_i}(s,\lambda)^{-1} \right)^{-1} &\geq \alpha_{\max}^{-4} \left( \sum_{i=1}^m |H_i|^{3} \right)^{-1} \tag{\Cref{prop:component_lower_bound}}\\
        &\geq \alpha_{\max}^{-4} |G|^{-3}\\
        &= \alpha_{\max}^{-7} n^{-3}.
    \end{align*}
    
    In case $Q$ is of type $A_n$ we extend $X$ to a representation of $\hat{A}_n$ by letting $X_n := 0$. It is clear that this does not change $\|\mu(X)\|$ or the fact that $X$ is unstable. Now each connected component $H_i$ of size greater than $n$ must contain a vertex of the form $(n,k)$ and for these vertices we have $s_{n,k} = 0$. We can split the component $H_i$ at the vertices of this form into smaller components, each of which has size $\leq n$ and contains a vertex with value zero. Using again the fact that $|H_i| \leq n$ for any connected component and $\sum_{i=1}^m |H_i| \leq (n+1) \alpha_{\max}$ we obtain
    \begin{align*}
        \frac{1}{4\alpha_{\max}^2} \left( \sum_{i=1}^m f_{H_i}(s,\lambda)^{-1} \right)^{-1} &\geq \alpha_{\max}^{-3} \left( \sum_{i=1}^m |H_i|^{3} \right)^{-1} \tag{\Cref{prop:component_lower_bound}}\\
        &\geq \alpha_{\max}^{-4} n^{-2}(n+1)^{-1}. \tag{\Cref{lem:sum_of_cubes}}\\
        &\geq \frac{1}{2}\alpha_{\max}^{-4} n^{-3}
    \end{align*}
    
\end{proof}


%% file: sigma_semistability.tex
Let $Q$ be an acyclic quiver with $n$ vertices, $\alpha$ a dimension vector and $\sigma :Q_0 \to \bZ$ a weight. Recall the $\GL_\alpha$ representation $\Rep_\alpha(Q) \oplus \chi_\sigma$, 
discussed in the introduction and the fact, that a representation $V\in \Rep_\alpha(Q)$ is $\sigma$-semistable, when $(V,1) \in \Rep_\alpha(Q) \oplus \chi_\sigma$ is $\GL_\alpha$-semistable. In this section we lowerbound the gap for this group action.

\begin{restatable}{theorem}{sigmaMain}
    \label{mainresult_sigma_semistability}
    The gap of $\Rep_\alpha(Q)\oplus \chi_\sigma$ satisfies
    \[ \gamma_{\GL}(Q,\alpha,\sigma) \geq \sqrt{\frac{4}{\alpha_{\max} n^3 (\langle |\sigma|,\alpha\rangle +2)^2}} \geq \Omega(\alpha_{\max}^{-1.5} n^{-2.5} |\sigma|_{\max}^{-1}).\]
\end{restatable}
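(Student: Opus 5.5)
The plan is to lower bound $\|\mu(V,c)\|$ by testing the moment map against one well-chosen Hermitian direction $H$, via $\|\mu\|\ge \tr(\mu H)/\|H\|$. The direction $H$ combines a topological-order \emph{potential} with a subrepresentation supplied by King's criterion.

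\textbf{Setup.} For $(X,c)\in \Rep_\alpha(Q)\oplus\chi_\sigma$ with $X=(X_a)_a$, the computation in the proof of \Cref{momentmap_formula}, extended by the one-dimensional summand, gives
\[ \tr(\mu(X,c)H)=\frac{1}{\|X\|^2+|c|^2}\Big(\sum_{a\in Q_1}\big(\tr(X_aX_a^*H_{ha})-\tr(X_a^*X_aH_{ta})\big)+|c|^2\sum_{v}\sigma(v)\tr(H_v)\Big). \]
Here the character contributes $\partial_{t=0}|\chi_\sigma(e^{tH})c|^2/2=|c|^2\sum_v\sigma(v)\tr H_v$, up to the sign convention for $\chi_\sigma$; if the convention flips the sign, replace $\sigma$ by $-\sigma$ throughout. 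I normalize so that $\|X\|^2+|c|^2=1$. Since $\mu$ is constant on scalar multiples and $0$ is excluded, it suffices to treat unstable $(X,c)$ with $c\neq 0$. If $c=0$, scale $X$ by an element of the torus of $\GL_\alpha$ so that $c$ becomes nonzero; alternatively treat $c=0$ as the special case $Y$ arbitrary below, where the same argument goes through with the $|c|^2$ term simply dropped.

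\textbf{Step 1: the potential part.} Since $Q$ is acyclic, I order $Q_0$ topologically, so that $ta<ha$ for every arrow $a$. I set $h(v):=\mathrm{pos}(v)-\frac{n+1}{2}$, which gives $|h(v)|\le n/2$ and $h(ha)-h(ta)\ge 1$. With $H^0_v:=h(v)I_{\alpha(v)}$, the arrow contribution is
\[ \sum_a (h(ha)-h(ta))\|X_a\|^2\ge\|X\|^2. \]
The character contribution is at least
\[ -|c|^2\tfrac n2\sum_v|\sigma(v)|\alpha(v)=-\tfrac n2 S|c|^2,\qquad S:=\langle|\sigma|,\alpha\rangle. \]

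\textbf{Step 2: the destabilizing part.} If $\sigma(\alpha)>0$, I take $P:=I$. If $\sigma(\alpha)<0$, I take $P:=-I$. In both cases the arrow terms of $P$ cancel and the character term equals $|\sigma(\alpha)||c|^2\ge|c|^2$.

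If $\sigma(\alpha)=0$, then $X$ is $\sigma$-unstable, so King's criterion (\Cref{KC}) gives a subrepresentation $Y\subseteq X$ with $\sigma(\dim Y)\ge1$. I take $P_v:=\pi_{Y_v}$, the orthogonal projection onto $Y_v$. Since $X_a(Y_{ta})\subseteq Y_{ha}$, we have $\pi_{Y_{ha}}X_a\pi_{Y_{ta}}=X_a\pi_{Y_{ta}}$. Hence
\[ \tr(X_aX_a^*\pi_{Y_{ha}})=\|\pi_{Y_{ha}}X_a\|^2\ge\|X_a\pi_{Y_{ta}}\|^2=\tr(X_a^*X_a\pi_{Y_{ta}}), \]
so every arrow term of $P$ is nonnegative. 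The character term equals $\sigma(\dim Y)|c|^2\ge|c|^2$.

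\textbf{Step 3: combine.} I set $H:=H^0+\kappa P$ with $\kappa:=\frac n2S+1$. Adding Steps 1 and 2, the arrow terms are at least $\|X\|^2$, and the character terms are at least $-\frac n2S|c|^2+\kappa|c|^2=|c|^2$. Therefore $\tr(\mu H)\ge\|X\|^2+|c|^2=1$.

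For the norm, each block satisfies $\|H_v\|_{\mathrm{op}}\le \frac n2+\kappa=\frac{n(S+1)}{2}+1\le\frac{n(S+2)}2$, using $n\ge1$. Hence
\[ \|H\|^2\le\sum_v\alpha(v)\frac{n^2(S+2)^2}{4}\le\frac{\alpha_{\max}n^3(S+2)^2}{4}. \]
This yields
\[ \|\mu\|\ge\frac{1}{\|H\|}\ge\sqrt{\frac{4}{\alpha_{\max}n^3(\langle|\sigma|,\alpha\rangle+2)^2}}, \]
which is exactly the stated bound. The $\Omega$ form follows from $S\le n\alpha_{\max}|\sigma|_{\max}$.

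\textbf{Main obstacle.} The delicate point is balancing the two test directions. The potential $H^0$ controls $\|X\|^2$ but pollutes the character term by up to $\frac n2S|c|^2$, which forces $\kappa\approx nS/2$. It is only the centering $|h|\le n/2$, together with bounding $\|H\|$ blockwise by $\alpha_{\max}$ times $n$ blocks, that keeps the cube at $n^3$ rather than $n^4$. I also expect to need care with the sign convention of $\chi_\sigma$, and with the degenerate case $c=0$ noted in the setup.
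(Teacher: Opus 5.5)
Your proof is correct and gives exactly the stated constant. It takes a genuinely different route from the paper's.

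\textbf{The paper's route.} The paper bounds the numerator and the denominator of $\|\mu(X,1)\|^2$ separately. It handles $y=0$ by citing the weight-margin bound \cite[Theorem 6.21]{Burgisser_2019}. It controls the denominator $\sum_a x_a+1$ with the graph-flow lemma (\Cref{graph_flow_lemma}). It then splits into two cases. If $\Delta\ge 1$ or $\sigma(\alpha)\neq 0$, it uses full traces via \Cref{frobenius_trace_bound}. If $\Delta<1$ and $\sigma(\alpha)=0$, it uses traces restricted to King's subrepresentation $Y$ via \Cref{rayleigh_trace_increases_under_map}.

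\textbf{Your route.} You build a single dual certificate $H=H^0+\kappa P$ and apply $\|\mu\|\ge \tr(\mu H)/\|H\|$. Your potential $H^0$ is the dual form of the flow lemma. Indeed, $\sum_a x_a\le\sum_a (h(ha)-h(ta))x_a=\sum_v h(v)F(v)\le \frac{n-1}{2}\sum_v|F(v)|$ reproves it in one line, with a slightly better constant. Your estimate $\|X_a\pi_{Y_{ta}}\|=\|\pi_{Y_{ha}}X_a\pi_{Y_{ta}}\|\le\|\pi_{Y_{ha}}X_a\|$ is a cleaner proof of \Cref{rayleigh_trace_increases_under_map}.

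\textbf{Comparison.} Your approach is uniform. It needs no case distinction on $\Delta$. The case $c=0$ is handled by $H^0$ alone, which already gives $\|\mu\|\ge\sqrt{12/(\alpha_{\max}n^3)}$, so the external weight-margin theorem is not needed. The paper's approach, in exchange, makes explicit which mechanism keeps the moment map away from zero: excess flow at the vertices, or a destabilizing subrepresentation.

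\textbf{Two small repairs.}
\begin{enumerate}
\item Drop your first suggestion for $c=0$, namely scaling by a torus element to make $c$ nonzero. The group acts on $c$ by multiplication with $\chi_\sigma(g)$, so $0$ stays $0$. Moreover $\mu$ is only $K$-invariant, not $G$-invariant. Keep only your second suggestion: take $H=H^0$, or add any $P$ with nonnegative arrow terms, and drop the character term. No appeal to King's criterion is needed there.
\item The inequality $\frac n2+\kappa\le\frac{n(S+2)}{2}$ needs $n\ge 2$, not $n\ge 1$. For all $n\ge 1$, use the sharper bound $|h(v)|\le\frac{n-1}{2}$. This gives $\frac{n-1}{2}+\kappa=\frac{n(S+1)}{2}+\frac12\le\frac{n(S+2)}{2}$.
\end{enumerate}
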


This shows, that \cite[Algorithm 4.2]{Burgisser_2019} solves the nullcone problem for this group action in time polynomial in $n, \alpha_{\max}, \sigma_{\max}$ and the bitlength of the entries of our representation. In particular this yields another proof for \cite[Theorem 1.1]{algorithmic_sigma}. We also show that the dependence on $|\sigma|_{\max}$ is unavoidable. Since $|\sigma|_{\max}$ is exponential in the bit length of $\sigma$ this suggests that the first order algorithm \cite[Algorithm 4.2]{Burgisser_2019} doesn't in general solve the nullcone membership problem in polynomial time for this group action. We begin our analysis by calculating the moment map.

\begin{proposition}
\label{moment_map_formula_sigma}
    The moment map $\mu : \Rep_\alpha(Q)\oplus \chi_\sigma \to \bigoplus_{v\in Q_0} \Herm_{\alpha(v)}$ is given by
    \[ \mu(X,y) = \frac{1}{\sum_{a\in Q_1}\| X_a \|^2 + |y|^2} \left(\sigma(v) |y|^2 I_{\alpha(v)}  + \sum_{\substack{a\in Q_1 \\ ha=v}} X_aX_a^* - \sum_{\substack{a\in Q_1 \\ ta=v}} X_a^*X_a \right)_{v\in Q_0}.\]

    \begin{proof}
        We begin by calculating the moment map $\mu':\bC \to \bigoplus_{v\in Q_0} \Herm_{\alpha(v)}$ of $\chi_\sigma$. The associtated Lie-Algebra action is
        \begin{align*}
            \Pi'(H) &= \partial_{t=0} \prod_{v\in Q_0} \det(e^{tH_v})^{\sigma(v)}\\ &= \partial_{t=0} \exp \left(t \sum_{v\in Q_0} \sigma(v)\tr(H_v)\right)\\ &= \sum_{v\in Q_0} \sigma(v)\tr(H_v).
        \end{align*}
        Thus we obtain $\mu'(y) = (\sigma(v) I_{\alpha(v)})_{v\in Q_0}$ since this is the unique element satisfying
        \[ \tr(\mu'(y) H) = \frac{\langle y, \Pi'(H)y \rangle}{|y|^2} = \sum_{v\in Q_0} \sigma(v)\tr(H_v).\]
        Combining this with \Cref{momentmap_formula} yields the result, since
        \[ \mu(X,y) = \frac{1}{\sum_{a\in Q_1}\| X_a \|^2 + |y|^2} \left(\sigma(v) |y|^2 I_{\alpha(v)}  + \sum_{\substack{a\in Q_1 \\ ha=v}} X_aX_a^* - \sum_{\substack{a\in Q_1 \\ ta=v}} X_a^*X_a \right)_{v\in Q_0}\]
        is the unique element satisfying
        \begin{align*}
            \tr(\mu(X,y) H) = \frac{\langle(X,y),\Pi(H)(X,y)\rangle}{\| (X,y) \|^2} = \frac{\sum_{v\in Q_0} \tr\left(\left( |y|^2 \sigma(v) I_{\alpha(v)} + \sum_{\substack{a\in Q_1 \\ ha=v}} X_aX_a^* - \sum_{\substack{a\in Q_1 \\ ta=v}} X_a^*X_a \right)H_v\right)}{\sum_{a\in Q_1} \|X_a\|^2 + |y|^2}.
        \end{align*}
    \end{proof}
\end{proposition}

\begin{definition}
    Let $U\subseteq \bC^d$ be a subspace and $X:\bC^d\to \bC^d$ a linear map. The rayleightrace is defined as
    \[ \tr_U(X) := \tr(\pi_U \circ X \circ \iota_U), \]
    where $\iota_U:U\to \bC^d$ is the inclusion map and $\pi:\bC^d\to U$ is the orthogonal projection onto $U$.
\end{definition}

\begin{lemma}
\label{rayleigh_trace_increases_under_map}
    Let $X\in \bC^{d',d}$ and $U\subseteq \bC^d$ a subspace. Then we have
    \[ \tr_U(X^*X) \leq \tr_{X(U)}(XX^*).\]
    \begin{proof}
        Denote $V:= X(U)$ and consider $X_{|U}:U\to V$. The singular value decomposition yields unitary bases $u_1,\ldots, u_{\dim(U)}$ and $v_1,\ldots, v_{\dim(V)}$, such that $Xu_i = \lambda_i v_i$ for $i\leq \dim(V)$ and $Xu_i=0$ for $i>\dim(V)$. For any $i \in [\dim(V)]$ we obtain using the Cauchy-Schwarz inequality that 
        \[ u_i^* X^* X u_i = \lambda_i^2 = (v_i^*Xu_i)^2 \leq \|X^*v_i\|^2 = v_i^* X X^* v_i. \]
        Thus we obtain
        \[ \tr_U(X^*X) = \sum_{i=1}^{\dim(V)} u_i^* X^* X u_i \leq \sum_{i=1}^{\dim(V)} v_i^* X X^* v_i = \tr_V(XX^*). \qedhere \]
    \end{proof}
\end{lemma}

\begin{lemma}
\label{frobenius_trace_bound}
    Let $A\in \bC^{d\times d}$ be hermitian and $U\subseteq \bC^{d}$ be any subspace. Then
    \[ \|A\|^2 \geq \frac{\tr_U(A)^2}{\dim(U)} \]
    \begin{proof}
        By observing $\| \pi_U \circ A\circ \iota_U\| \leq \|A\|$ we may assume $U=\bC^d$. Let $\lambda$ be the spectrum of $A$, then we see by the Cauchy-Schwarz inequality, that
        \[ \tr(A) = \sum \lambda_i = \langle \lambda, \1 \rangle \leq \|\lambda\| \cdot \|\1\| = \sqrt{d}\|A\|.\qedhere \] 
    \end{proof}
\end{lemma}

\begin{lemma}
\label{graph_flow_lemma}
    Let $Q$ be an acyclic quiver with non-negative edge weights $x\in \bR_{\geq 0}^{Q_1}$. We denote the flow on $Q$ by $F:Q_0\to \bR, v\mapsto \sum_{\substack{a\in Q_1 \\ ha=v}} x_a - \sum_{\substack{a\in Q_1 \\ ta=v}} x_a$. Then we have
    \[ \sum_{v\in Q_0} |F(v)| \geq \frac{2}{n} \sum_{a\in Q_1} x_a. \]

    \begin{proof}
        W.l.o.g. we assume that $Q$ is the complete acyclic digraph on $n$ vertices. Now assume there is an arrow $(i,j)$ with $x_{i,j}>0$ and $j>i+1$. Then we can pass to $x'_{i,j}:=0$ and $x'_{k,k+1}:= x_{k,k+1} + x_{i,j}$ for $i\leq k<j$. Under this replacement, the flow doesn't change, while the R.H.S. of our inequality increases, thus we can assume that $Q$ is the equioriented $A_n$ quiver. Now assume, that there exists an index $i\neq 1$, such that $F(i)<0$, then we can pass to $x_{i-1,i}':= x_{i-1,i}+F(i)$. This does not decrease the L.H.S., while increasing the R.H.S. Thus we can assume $F(i)\geq 0$ for $i>1$. Similarly we can assume $F(i)\leq 0$ for $i<n$ and we obtain $F(i)=0$ for $1<i<n$. Finally this yields $x_{1,2}=x_{2,3}=\ldots= x_{n-1,n}$ and the inequality which we wanted to show holds with equality.
    \end{proof}
\end{lemma}

Now we can prove the main theorem of this section.

\sigmaMain*
\begin{proof}
        Let $(X,y) \in \Rep_\alpha(Q)\oplus \chi_\sigma$ be unstable. In case $y = 0$ we have $\mu(X, 0) = \mu_{\GL}(X)$, where the moment map on the right, comes from interpreting $X$ as an element of $\Rep_\alpha(Q)$ under the $\GL_\alpha$ action. Using \cite[Theorem 6.21]{Burgisser_2019} we can lowerbound the weight margin of that representation as
        \[ \|\mu_{\GL}(X)\| \geq \gamma_{\GL}(Q, \alpha) \geq \gamma_{T}(Q, \alpha) \geq \Big(\sum_{v\in Q_0} \alpha(v))\Big)^{-\frac{3}{2}} \geq \alpha_{\max}^{-1.5} n^{-1.5}.\]
        In case $y\neq 0$ we can assume w.l.o.g., that $y=1$ (by passing to $(y^{-1}X,1) \in \Rep_\alpha(Q)\oplus \chi_\sigma$). We denote $x_a := \tr(X_aX_a^*) = \tr(X_a^*X_a)$ for $a\in Q_1$ and the corresponding flow on $Q$ as $F:Q_0\to \bR$. We also define
        \[ \Delta := \sum_{v\in Q_0} |F(v)| -\langle |\sigma|,\alpha\rangle = \sum_{v\in Q_0} |F(v)| - \sum_{v\in Q_0} |\sigma(v)|\alpha(v).\]
        We recall the formula for the moment map \Cref{moment_map_formula_sigma} and calculate
        \[ \|\mu(X,1)\|^2 = \frac{\sum_{v\in Q_0} \|\mu_v(X,1)\|^2}{(\sum_{a\in Q_1} x_a + 1)^2}.\]
        For the denominator of the moment map we see by using \Cref{graph_flow_lemma} in the first step, that
        \[\Big(\sum_{a\in Q_1} x_a + 1\Big)^2
            \leq \Big(\frac{n}{2} \sum_{v\in Q_1} |F(v)| + 1 \Big)^2
            \leq \frac{n^2}{4} (\langle |\sigma|,\alpha\rangle +\Delta +1)^2 \tag{$\ast$} \]
        In case $ \Delta \geq 1$ or $\sigma(\alpha) \neq 0$, we lowerbound the numerator of the moment map by using \Cref{frobenius_trace_bound} in the second step
        \begin{align*}
            \|\mu_v(X,1)\|^2&= \left\| \sigma(v) I_{\alpha(v)}  + \sum_{\substack{a\in Q_1 \\ ha=v}} X_aX_a^* - \sum_{\substack{a\in Q_1 \\ ta=v}} X_a^*X_a \right\|^2\\ &\geq \frac{1}{\alpha_{\max}} \tr\left( \sigma(v) I_{\alpha(v)}  + \sum_{\substack{a\in Q_1 \\ ha=v}} X_aX_a^* - \sum_{\substack{a\in Q_1 \\ ta=v}} X_a^*X_a \right)^2\\ &= \frac{1}{\alpha_{\max}} \left(\sigma(v) \alpha(v) + \sum_{\substack{a\in Q_1 \\ ha=v}} x_a - \sum_{\substack{a\in Q_1 \\ ta=v}} x_a\right)^2
            \\ &= \frac{1}{\alpha_{\max}} \left(\sigma(v) \alpha(v) + F(v)\right)^2.
        \end{align*}
        In case $\sigma(\alpha) \neq 0$, summing up over $v\in Q_0$ yields
        \[\sum_{v\in Q_0} (\sigma(v)\alpha(v) + F(v))^2 \geq \frac{1}{n} \Big(\sum_{v\in Q_0} \sigma(v)\alpha(v) + F(v)\Big)^2 = \frac{1}{n} (\sigma(\alpha))^2 \geq \frac{1}{n}. \]
        In case $\Delta \geq 1$, we obtain a similar bound
        \begin{align*}
            \sum_{v\in Q_0} (\sigma(v)\alpha(v) + F(v))^2 \geq \sum_{v\in Q_0} (|\sigma(v)\alpha(v)| - |F(v)|)^2 \geq \frac{1}{n}(\sum_{v\in Q_0} |\sigma(v)\alpha(v)| - |F(v)|)^2 = \frac{\Delta^2}{n} \geq \frac{1}{n}.
        \end{align*}
        Combining this with $(\ast)$ yields
        \[ \|\mu(X,1)\|^2 \geq \frac{4}{\alpha_{\max} n^3 (\langle |\sigma|,\alpha\rangle +2)^2}.\]
        Now assume $\Delta<1$ and $\sigma(\alpha) = 0$. By Kings-criterion \ref{KC}, there exists a subrepresentation $Y\subseteq X$, with dimension vector $\beta$, such that $\langle \sigma, \beta\rangle \geq 1$. Now we can apply \Cref{frobenius_trace_bound}, to see
        \begin{align*}
            \|\mu_v(X,1)\|^2 &= \left\| \sigma(v) I_{\alpha(v)}  + \sum_{\substack{a\in Q_1 \\ ha=v}} X_aX_a^* - \sum_{\substack{a\in Q_1 \\ ta=v}} X_a^*X_a \right\|^2\\ &\geq \frac{1}{\alpha_{\max}} \tr_{Y(v)} \left( \sigma(v) I_{\alpha(v)}  + \sum_{\substack{a\in Q_1 \\ ha=v}} X_aX_a^* - \sum_{\substack{a\in Q_1 \\ ta=v}} X_a^*X_a \right)^2\\ &= \frac{1}{\alpha_{\max}} \left(\sigma(v) \beta(v) + \sum_{\substack{a\in Q_1 \\ ha=v}} \tr_{Y(v)}(X_aX_a^*) - \sum_{\substack{a\in Q_1 \\ ta=v}} \tr_{Y(v)}(X_a^*X_a)\right)^2.
        \end{align*}
        By applying \Cref{rayleigh_trace_increases_under_map} in the last step, we see
        \begin{align*}
            &\sum_{v\in Q_0} \Big(\sigma(v) \beta(v) + \sum_{\substack{a\in Q_1 \\ ha=v}} \tr_{Y(v)}(X_aX_a^*) - \sum_{\substack{a\in Q_1 \\ ta=v}} \tr_{Y(v)}(X_a^*X_a)\Big)\\
            &= \langle \sigma, \beta\rangle + \sum_{a\in Q_1} \tr_{Y(ha)}(X_aX_a^*) - \tr_{Y(ta)}(X_a^*X_a)\\
            &\geq 1.
        \end{align*}
        Summing up over $v\in Q_0$ and combining this with $(\ast)$ yields
        \[  \|\mu(X,1)\|^2 = \frac{\sum_{v\in Q_0} \|\mu_v(X,1)\|^2}{(\sum_{a\in Q_1} x_a + 1)^2} \geq \frac{1}{n \alpha_{\max}} \frac{4}{n^2 (\langle |\sigma|,\alpha\rangle +\Delta +1)^2} \geq \frac{4}{\alpha_{\max} n^3 (\langle |\sigma|,\alpha\rangle +2)^2}.\]
        Thus we have shown
        \[ \gamma_{\GL}(Q,\alpha,\sigma) \geq \sqrt{\frac{4}{\alpha_{\max} n^3 (\langle |\sigma|,\alpha\rangle +2)^2}}\geq \Omega(\alpha_{\max}^{-1.5} n^{-2.5} |\sigma|_{\max}^{-1}). \qedhere\]
    \end{proof}

Next we show with an example, that the dependence on $|\sigma|_{\max}$ is unavoidable even when $\sigma(\alpha) = 0$. Let $Q_n$ be obtained from an alternating $A_n$ quiver by replacing each arrow with two copies, for example $Q_4$ is
\[\begin{tikzcd}
	\bullet & \bullet & \bullet & \bullet
	\arrow[shift right, from=1-1, to=1-2]
	\arrow[shift left, from=1-1, to=1-2]
	\arrow[shift right, from=1-3, to=1-2]
	\arrow[shift left, from=1-3, to=1-2]
	\arrow[shift left, from=1-3, to=1-4]
	\arrow[shift right, from=1-3, to=1-4]
\end{tikzcd}\]
We label the vertices as $1$ to $n$ starting on the left and we similarly label the arrows $1,1',2,2',\ldots$.

\begin{proposition}
    Let $n$ be even and let $\alpha=(2,3,3,\ldots,3,1)$ be a dimension vector for $Q_{n+1}$ and let $\sigma=(-1,2,-4,8,\ldots)$ be a weight. We have $\sigma(\alpha)=0$ and
    \[ \gamma_{\GL}(Q_n,\alpha,\sigma) \leq \sqrt{\frac{5}{6}} \cdot \frac{1}{2^{n+2}-3} \leq \mathcal{O}(|\sigma|_{\max}^{-1}). \]
\begin{proof}
    It is straightforward to check that $\sigma(\alpha)=0$. We define a representation $X\in \Rep_\alpha(Q_n)$ by   
    \[ X_1 = \begin{pmatrix}
    1 & 0 \\ 0 & 0 \\ 0 & 0
    \end{pmatrix} \quad X_1' = \begin{pmatrix}
    0 & 0 \\ 0 & 0 \\ 0 & 0
    \end{pmatrix}, X_n = \begin{pmatrix}
    2^{\frac{n}{2}} \\ 0 \\ 0
    \end{pmatrix} \quad X_n' = \begin{pmatrix}
    0  \\ 2^{\frac{n}{2}}  \\ 0 
    \end{pmatrix} \]
    as well as 
    \[ \begin{pmatrix}
    X_{2k} \\ X_{2k}' \\
    X_{2k+1} \\ X_{2k+1}' \end{pmatrix} = \begin{pmatrix}
    2^{\frac{k}{2}} E_{2,1} \\ 2^{\frac{k}{2}} E_{3,1} \\
    2^{\frac{k+1}{2}} E_{1,2} \\ 2^{\frac{k+1}{2}}E_{1,3} \end{pmatrix}\]
    for $k\in [\frac{n}{2}-1]$.
    
    We see, that $X$ is $\sigma$-unstable, since it has a subrepresentation $Y\subseteq X$ with dimension vector $\beta = (1,3,3,\ldots, 3,1)$. We then have $\sigma(\beta) = \sigma(\alpha) + 1 = 1$ and the instability follows from \Cref{KC}.
    
    We immediately see $\mu_{n+1}(X)=0$, since $\alpha(n+1)=1$. For $k\geq 1$ we see
    \begin{align*}
        \mu_{2k+1}(X) &= -p(X_{2k}^* X_{2k} + (X'_{2k})^* X_{2k}' + X_{2k+1}^* X_{2k+1} + (X'_{2k+1})^* X_{2k+1}')\\
        &= -p(2^{k} E_{1,1} + 2^{k} E_{1,1} + 2^{k+1} E_{2,2} + 2^{k+1} E_{3,3})\\
        &= -2^{k+1} p(I_3) = 0.
    \end{align*}
    And similarly for $k\geq 2$
    \begin{align*}
        \mu_{2k}(X) &= p(X_{2k} X_{2k-1}^* + X_{2k-1}'(X'_{2k-1})^* + X_{2k}X_{2k}^* + X_{2k}'(X'_{2k})^*)\\
        &= p(2^{k-1} E_{1,1} + 2^{k-1} E_{1,1} + 2^{k} E_{2,2} + 2^{k} E_{3,3})\\
        &= -2^{k} p(I_3) = 0.
    \end{align*}
    We also see
    \[ \|\mu_1(X)\|^2 = \|\diag(\frac{1}{2}, -\frac{1}{2})\|^2 = \frac{1}{2}.\]
    \[ \|\mu_2(X)\|^2 = \|\diag(\frac{2}{3}, -\frac{1}{3}, -\frac{1}{3})\|^2 = \frac{2}{3}. \]
    Thus we obtain $\| \Tilde{\mu}(X)\|=\sqrt{\frac{5}{6}}$. And another computation shows
    \[ \|X\|^2 = -1 + 2\sum_{k=1}^{n} 2^k = 2^{n+2}-3.\]
    Finally we obtain
    \[ \gamma_{\GL}(Q_n,\alpha,\sigma) \leq \|\mu(X)\| = \sqrt{\frac{5}{6}} \cdot \frac{1}{2^{n+2}-3} \leq \mathcal{O}(2^{-n}). \qedhere\]
\end{proof}
\end{proposition}

We should note that the representation we constructed in the proof above is almost (up to changing $X_1'$) isomorphic to the representation constructed in \cite[Proposition 1.5]{derksen2016degreeboundssemiinvariantrings}. It is also similar to the construction in \cite[Theorem 4.25]{FR21}.

%% file: weightmargin_and_gap_star_quivers.tex
In this section we prove that for any tree quiver with $n$ leaves, there exists a dimension vector $\alpha$, whose entries are upper bounded by $n+1$, such that
\[ \gamma_{\SL}(Q,\alpha) \leq \mathcal{O}(2^{-\frac{n}{2}}).\]
In particular this shows, that our results from the previous sections cannot be generalized to all tree quivers. We begin by showing this for star quivers which are defined as follows. An $n$-star quiver is a quiver with
\[ Q_0:=\{0,1\ldots, n\} \quad Q_1:=\{1,\ldots, n\}, \]
with $(ha, ta) \in \{(0, a),(a,0)\}$ for $a\in Q_1$. For example, the equioriented $4$-star quiver is
\[\begin{tikzcd}
	& 2 \\
	1 & 0 & 3 \\
	& 4
	\arrow[from=1-2, to=2-2]
	\arrow[from=2-1, to=2-2]
	\arrow[from=2-3, to=2-2]
	\arrow[from=3-2, to=2-2]
\end{tikzcd}\]
For a finite set $A\subseteq \bR^n$ the \emph{margin} is defined as
\[ \gamma(A) : = \min\{ \dist(0, \conv(B)) \;|\; B\subseteq A, \; 0\not\in \conv(B) \}. \]
Recall, that $p:\bC^d \to \bC^d, x\mapsto x- \frac{\langle x, \1\rangle}{d} \1$ is the projection map along the ones vector.

\begin{proposition}
\label{weightmargin_star_quiver}
    Let $Q$ be an $n$-star-quiver. Then there exists a dimension vector $\alpha$, with $\alpha(0)=n+1$ and $\alpha(k)\leq n$ for $k\in [n]$, such that
    \[ \gamma_{\SL}(Q,\alpha) \leq \gamma(p(\{0,1\}^{n+1})) \leq \frac{1}{2^{\lfloor \frac{n+1}{2} \rfloor}-1}.\]

    \begin{proof}
        We only prove the first inequality, the second one is treated in \Cref{margin_of_projected_cube} below. For the time being we assume that $Q$ is equioriented, i.e. all arrows point toward the central vertex. Let $u_1,\ldots, u_{n} \in \{0,1\}^{n+1}$ and $\lambda\in \bR_{\geq 0}^n$ realize the weight margin $\gamma(p(\{0,1\}^{n+1}))$, i.e. $0\not\in \conv(p(u_1),\ldots, p(u_n))$ and
        \[ \Big\| \sum_{k=1}^n \lambda_k p(u_k)  \Big\| = \gamma(p(\{0,1\}^{n+1})), \quad \sum_{k=1}^{n} \lambda_k =1.\]
        Let $\alpha\in \bN^{Q_0}$ be defined by $\alpha(0):=n+1$ and $\alpha(k):=\sum_{i=1}^{n+1} u_{k,i} = \#\supp(u_k)$. Let $v_k\in \bN^{\alpha(k)}$ be defined by
        \[ v_{k,i} := \text{ the location of the $i$-th one in $u_k$}. \]
        Let $X\in \Rep_\alpha(Q)$ be defined by $ X_k:\bC^{\alpha(k)} \to \bC^{n+1}, e_i \mapsto \sqrt{\lambda_k} e_{v_{k,i}}$. This ensures that $X_k^*X_k = \lambda_k I_{\alpha(k)}$ and $X_kX_k^* = \lambda_k \diag(u_k)$. By \Cref{momentmap_formula}, it follows that $\mu_k(X) = 0$ for $k\in [n]$ and $\mu_0(X) = p(\diag( \sum \lambda_k u_k ))$.
        For the denominator of the moment map we see
        \[ \|X\|^2 =\sum_{k=1}^n \tr(X_k^*X_k) = \sum_{k=1}^n \lambda_k \alpha(k) \geq \sum_{k=1}^n \lambda_k = 1.\]
        Combining these observations yields
        \[ \|\mu(X)\| = \frac{\sqrt{ \sum_{k=0}^n \| \mu_k(X) \|^2 }}{\sum_{k=1}^n \tr(X_k^*X_k)} \leq \|\mu_0(X)\| = \Big\|\sum_{k=1}^n \lambda_k p(u_k)\Big\| = \gamma(p(\{0,1\}^{n+1})). \]
        It remains to show, that $X$ is $\SL_{\alpha}$-unstable. Indeed we will show, that $X$ is $\ST_\alpha$-unstable. Since $0\not\in \conv(p(u_1),\ldots, p(u_n))$, there exists a vector $w\in \bR^{n+1}$ with $\sum_{k=1}^{n+1} w_k =0$, such that $\langle w,u_k\rangle = \langle w,p(u_k)\rangle >0$ for all $k\in [n]$. Now let $M(t)\in \SL_\alpha$ be defined by \[M_0(t):=\diag(t^{w_1},\ldots, t^{w_{n+1}})\]
        and 
        \[M_k(t) :=t^{-\frac{\langle w,u_k\rangle}{\alpha(k)}} \cdot \diag(t^{w_{v_{k,1}}},\ldots, t^{w_{v_{k,\alpha(k)}}}).\]
        One can check that the determinants of these matrices are one and
        \[ \lim_{t\to 0} (M(t)\cdot X)_k = \lim_{t\to 0} t^{\frac{\langle w,u_k\rangle}{\alpha(k)}} X_k = 0,\]\
        for all $k\in[n]$, thus $X$ is unstable as claimed.
        
        Now we return to the case when $Q$ has arbitrary orientation. Consider an equioriented $n$-star quiver $Q'$, then by the above construction, we get an unstable representation $X\in \Rep_{\alpha}(Q')$. Each map $X_k:\bC^{\alpha(k)} \to \bC^{n+1}$ is an isometric embedding and we let $X_k': \bC^{n+1}\to \bC^{n+1-\alpha(k)}$ be $\sqrt{\lambda_k}$ times the orthogonal projection of $\bC^{n+1}$ onto $\im(X_k)^{\perp}$. This yields $X_k'(X_k')^* = \lambda_k I_{n+1-\alpha(k)}$ and $(X_k')^*X_k'=\lambda_k I_{n+1} -X_kX_k^*$. Now we define a representation $Y\in \Rep(Q)$ as follows. For each arrow $k\in Q_0$ we let
        \[ Y_k :=\left\{ \begin{array}{cc}
            X_k &, \text{when } (hk,tk) = (0,k)\\
            X_k' &, \text{when } (hk,tk) = (k,0)
        \end{array} \right. \]
        As above we obtain $\mu_k(Y) = 0$ for $k\in [n]$ and $\mu_0(Y) = \mu_0(X)$, as well as
        \[ \|Y\|^2 = \sum_{k=1}^n \tr(Y_k^*Y_k) \geq \sum_{k=1}^n \lambda_k = 1.\]
        Together this yields
        \[ \|\mu(Y)\| = \frac{\sqrt{\sum_{k=0}^n \|\mu_0(Y)\|^2}}{\|Y\|^2} \leq \|\mu_0(Y)\| = \gamma(p(\{0,1\}^{n+1})).\]
        The instability of $Y$ follows in a similar way to that of $X$.
    \end{proof}
\end{proposition}

\begin{lemma}
\label{margin_of_projected_cube}
    The margin of the projected unit cube $p(\{0,1\}^n)$ is exponentially small
    \[ \gamma(p(\{0,1\}^n)) \leq \frac{1}{2^{\lfloor \frac{n}{2} \rfloor}-1} \]
    \begin{proof}
        W.l.o.g. we may assume $n$ even. We write $n-1=2k+1$. Consider the sets of vectors 
        \[V:=\{ (1,0),(0,1) \}^k\times \{0\} \quad W:=\{ (1,0)^{i-1} (0,0) (1,1)^{k-i}  (1) \;|\; i\in [k]\}.\]
        Assume towards a contradiction, that $\Span(V\cup W) \neq \bR^{2k+1}$, then there exists a non zero $\mu\in \bR^{2k+1}$, such that $\langle \mu, v\rangle =0$ for all $v\in \Span(V\cup W)$. Since for all $i\in [k]$ we have
        \[ (0,0)^{i-1}(1,-1)(0,0)^{k-i}(0) = (1,0)^{i}(0,1)^{k-i}(0) - (1,0)^{i-1}(0,1)^{k-i+1}(0) \in \Span(V\cup W),\]
        we see that $\mu$ has the form $\mu = (\mu_k,\mu_k,\mu_{k-1},\mu_{k-1},\ldots, \mu_1,\mu_1, \mu_0)$. The vectors in $V$ yield the equation
        \[ 0= \sum_{i=1}^k \mu_i.\]
        The vectors in $W$, further yield the equations
        \[ 0 = \mu_0 + 2\sum_{i=0}^{j-1} \mu_i + \sum_{i=j+1}^k \mu_i = - \mu_j + \sum_{i=0}^{j-1} \mu_i + \sum_{i=1}^k \mu_i = - \mu_j + \sum_{i=0}^{j-1} \mu_i, \]
        for $j\in[k]$. These equations inductively yield $\mu_j = 2^{j-1} \mu_0$ for $j\in[k]$. But now the first equation simplifies to 
        \[ 0 = \sum_{i=1}^k \mu_i = (2^{k}-1) \mu_0\]
        and thus $\mu_0=0$, which implies $\mu=0$ the desired contradiction. Thus we have obtained $\Span(V\cup W) = \bR^{2k+1}$ and we can construct an invertible matrix $A\in \bR^{(n-1)\times (n-1)}$ with rows $r_1,\ldots, r_{n-1} \in V\cup W$.
        
        Let $\lambda:=(2^{k-1},2^{k-1},2^{k-2},2^{k-2},\ldots, 1,1,1)$. For each $v\in V\cup W$, we see $\langle v,\lambda\rangle = 2^k-1$, which means $p(A \lambda^T) = 0$ and since $A$ is invertible and the kernel of $p$ is one dimensional, we obtain $\ker(p\circ A) = \bR \lambda$. Now let $A'\in \{0,1\}^{n\times (n-1)}$ be the matrix obtained by appending the row $w:=(1,0)^k(1)$ to $A$. We see $\langle \lambda, w\rangle = 2^k$ and thus $p'(A' \lambda^T) \neq 0$, where $p':\bR^{n} \to \bR^{n}$ is the usual projection. This yields that $p'\circ A'$ is injective, since $\bR\lambda$ is the only candidate for the kernel by the previous argument. Denote the columns of $A'$ by $u_1,\ldots, u_{n-1} \in \{0,1\}^n$. By the injectivity of $p'\circ A'$, we see that $0\not\in \conv\{ p(u_1),\ldots, p(u_{n-1}) \}$ but we also see
        \[ \sum_{i=1}^{n-1} \lambda_i p(u_i) = p\begin{pmatrix}
            2^k-1\\ \ldots \\ 2^{k}-1\\ 2^k
        \end{pmatrix}= \frac{1}{n} \begin{pmatrix}
            -1\\ \ldots \\ -1\\ n-1
        \end{pmatrix}.\]
        Thus we obtain $\frac{1}{n (2^{k+1}-1)} (-1,\ldots, -1,n-1)^T \in \conv\{ p(u_1),\ldots, p(u_{n-1}) \}$. This vector has norm $\frac{\sqrt{n(n-1)}}{n (2^{k+1}-1)} \leq \frac{1}{2^{k+1}-1}$ and the result follows.
    \end{proof}
\end{lemma}

Next we generalize this result to arbitrary quivers with $n$ leaves.

\begin{theorem}
\label{tree_quivers_leaf_bound}
    Let $Q$ be a tree quiver with $n$ leaves. Then there exists a dimension vector $\alpha$ with $\alpha_{\max} = n+1$, such that \[ \gamma_{\SL}(Q,\alpha) \leq \frac{1}{2^{\lfloor \frac{n+1}{2} \rfloor}-1}.\]

    \begin{proof}
        We define an unstable representation $X$ of $Q$ realizing the bound. After contracting all interior vertices of $Q$ we obtain an $n$-star quiver $\overline{Q}$. Let $Y$ be the unstable representation of $\overline{Q}$ defined in \Cref{weightmargin_star_quiver} and denote its dimension vector $\beta$. For each leaf $k\in [n]$ let $X(k):=Y(k)$ and let $X(v):=Y(0)$ for each $v\in Q_0$ which is not a leaf. For each leaf $k\in [n]$ denote its adjacent arrow as $k$ and let $X_k := Y_k$. We note that by the definition of $X$ we have
        \[ \mu_k(X) = \mu_k(Y)= 0.\]
        We define the rest of the matrices inductively. The restriction of $Q$ to all non-leaf vertices is again a tree quiver $Q'$. Let $v\in Q'_0$ be a leaf with adjacent arrow $b$. Now let \[M:= \sum_{\substack{a\in Q_1\backslash{Q_1'}\\ ha=v}} X_aX_a^* - \sum_{\substack{a\in Q_1\backslash{Q_1'}\\ ta=v}} X_a^*X_a \]
        This matrix is diagonal with real entries, so there exists a unique $\alpha_a\in \bR$, such that $M+ \alpha_a I$ is a diagonal matrix with non-negative entries and $\det(M+\alpha_a I)=0$. In case $ta=v$ we let $X_a :=\sqrt{M+\alpha_a I}$ and see
        \[ \mu_v(X) = p\Big(\sum_{\substack{a\in Q_1\backslash{Q_1'}\\ ha=v}} X_aX_a^* - \sum_{\substack{a\in Q_1\backslash{Q_1'}\\ ta=v}} X_a^*X_a - \sqrt{M+\alpha_a I}^2\Big) = p(M- (M+\alpha_a)) = p(-\alpha_a I) = 0.\]
        
        In case $ha=v$ there exists a unique $\alpha_a\in \bR$, such that $M+ \alpha_a I$ is a diagonal matrix with non-positive entries and $\det(M+\alpha_a I)=0$ and we define $X_a:=\sqrt{-(M+\alpha_a I)}$. In this case we have
        \[ \mu_v(X) = p\Big(\sum_{\substack{a\in Q_1\backslash{Q_1'}\\ ha=v}} X_aX_a^* - \sum_{\substack{a\in Q_1\backslash{Q_1'}\\ ta=v}} X_a^*X_a + \sqrt{-(M+\alpha_a I)}^2\Big) = p(M- (M+\alpha_a)) = p(-\alpha_a I) = 0.\]
        Now we remove $v$ from $Q'$ and proceed inductively until $Q'$ is a single vertex $u$. At each step in the construction we have $\sum_{v\in Q'} \mu_v(X) = \mu_0(Y)$. This implies that we have
        \[\mu_u(X) = \mu_0(Y).\]
        Thus we obtain
        \[ \|\mu(X)\| \leq \frac{\|\mu_u(X)\|}{\sum_{k=1}^n \tr(X_k^*X_k)} = \gamma(p({0,1}^{n+1})) \leq \frac{1}{2^{\lfloor \frac{n+1}{2} \rfloor} - 1}.\]
        
        It remains to show that $X$ is unstable. By the proof of \Cref{weightmargin_star_quiver} there exists a one parameter subgroup of diagonal matrices $M(t) \in \ST_{\beta}$, such that $\lim_{t\to 0}M(t)\cdot Y = 0$. We let $M(t)$ act on $X$ by acting with $M_k(t)$ at each leaf $k\in [n]$ and with $M_0(t)$ at each non-leaf vertex of $Q$. Since each $X_a$ is a diagonal matrix for $a\in Q_1\backslash{[n]}$, we obtain $\lim_{t\to 0} M(t) \cdot X = X'$, where $X'_k = 0$ for $k\in [n]$ and $X'_a=X_a$ for each $a\in Q_1\backslash{[n]}$. But now $X'$ is a representation of a tree quiver $Q'$ with uniform dimension vector, such that $\det(X'_a)=0$ for each $a\in Q'_1$. Such a representation is unstable by \Cref{nullcone_tree_uniform}.
    \end{proof}
\end{theorem}

\section{Weightmargin of arbitrary quivers}

In the previous section we showed that tree quivers have exponentially small gap (for certain dimension vectors) in the number of leaves. In this section we extend this result to connected quivers with $n+1$ vertices and an $n$-fold copy of an arrow next to a leaf. For example the following quiver
\[\begin{tikzcd}
	\bullet & \bullet & \bullet & \bullet
	\arrow[from=1-1, to=1-2]
	\arrow[from=1-2, to=1-3]
	\arrow[from=1-3, to=1-4]
	\arrow[shift left=3, from=1-3, to=1-4]
	\arrow[shift right=3, from=1-3, to=1-4]
\end{tikzcd}\]
Since the weightmargin is upper bounded by the gap and doesn't see arrow multiplicity, we obtain as a corollary, that any connected quiver with $n+1$ vertices has a dimension vector (bounded by $n^2$), such that the weightmargin is exponentially small in $n$.

\begin{theorem}
\label{gap_arrow_multiplicity}
    Let $Q'$ be a tree quiver with $n+1$ vertices and let $Q$ be a quiver obtained from $Q'$ by replacing an arrow adjacent to a leaf with $n$ copies. Then there exists a dimension vector $\alpha$ with $\alpha_{\max} \leq n^2$, such that
    \[ \gamma_{\SL}(Q,\alpha) \leq \frac{1}{2^{\lfloor \frac{n+1}{2}\rfloor}-1} \]

    \begin{proof}
        W.l.o.g. assume that $n$ is odd. Let $v$ be the leaf of $Q'$ with adjacent arrow $b$, such that $b$ is replaced by $n$ copies $b_1,\ldots, b_n$ in $Q$. Assume w.l.o.g. that $hb=v$ (otherwise pass to the dual quiver $Q^*$). Let $\Tilde{Q}$ be the quiver obtained from $Q$ by changing the orientation of all arrows such that they point towards $v$, we write $\Tilde{h}, \Tilde{t}$ for the head and tail of arrows in $\Tilde{Q}$. We inductively define a function $f:Q_0\backslash{v}\to P([n])$ and a function $g:Q_0\backslash{v}\to [n]$. Let $w:=tb$ be the unique vertex adjacent to $v$ in $Q'$. Let $f(w) := [n]$. Now $w$ has some incoming arrows $a_1,\ldots, a_L \in \Tilde{Q}_1$ whose tails we denote $w_k:=\Tilde{t}a_k$. Assume the $a_k$ are sorted, such that $ha_k = w$ for all $1\leq k \leq l$ and $ta_k = w$ for all $l< k \leq L$. Each $w_k$ determines a subquiver (the connected component of $\Tilde{Q}\backslash{a_k}$ containing $w_k$) with $n_k$ vertices. We see $|f(w)|=n = 1+\sum_{k=1}^L n_k$. Now we assign $f(w_1):= [1, n_1]$, $f(w_k):=[1+\sum_{i=1}^{k-1} n_i, \sum_{i=1}^{k} n_i]$ for $1\leq i\leq l$ and $f(w_k):=[2+\sum_{i=1}^{k-1} n_i, 1+\sum_{i=1}^{k} n_i]$ for $l< k\leq L$. Finally we let $g(w):=1+\sum_{k=1}^l n_k$ and we see that
        \[ f(w)=\{ g(w) \} \cup \bigcup_{k=1}^L f(w_k).\]
        Now we proceed to define $f(w_k)$ and $g(w_k)$ for $1\leq k\leq L$. This way we inductively define $f$ and $g$.
        
        Let $S$ be the equioriented n-star quiver and let $\beta$ and $Y\in \Rep_\beta(S)$ be the dimension vector and representation constructed in \Cref{weightmargin_star_quiver}. From the construction of $Y$ we know that $Y_k^*Y_k = \lambda_k I_{\beta(k)}$ for some $\lambda_k\geq 0$ and for each $k\in [n]$ and w.l.o.g. we assume that $\lambda_1 \geq \lambda_{2} \geq \ldots \geq \lambda_n$. We define a representation $X\in \Rep_{\alpha}(Q)$. Let $X(v):=Y(0)$ and $X(u):=\bigoplus_{k\in f(u)} Y(k)$ for each $w\in Q_0\backslash{\{v\}}$. For each $k\in[n]$ we let $X_{b_k} := Y_k \circ \pi_{Y(k)}$, where $\pi_{Y(k)} : \bigoplus_{l=1}^{n} Y(l) \to Y(k)$ is the projection. We see
        \[ X_{b_k}^*X_{b_k} = \lambda_k \iota_{Y(k)} \circ\pi_{Y(k)}\]
        \[ \|\mu_v(X)\|^2 = \|\sum_{k=1}^n p(X_{b_k}X_{b_k}^*)\|^2 = \|\sum_{k=1}^n p(Y_kY_k^*)\|^2 \leq \frac{1}{2^{\lfloor\frac{n}{2}\rfloor} - 1} \]
        Now let $a\in Q_1\backslash{\{b_1,\ldots, b_n\}}$ be some other arrow and let $w:=\Tilde{h}a$ and $u:=\Tilde{t}a$. We let
        \[ X_a := \sum_{k\in f(u)} \sqrt{|\lambda_k-\lambda_{g(w)}|}\iota_{Y(k)} \circ \pi_{Y(k)}.\]
        Next we show, that $\mu_u(X)=0$ for all $u\in Q_0\backslash{\{v\}}$. As above let $w\in Q_0$ be the vertex adjacent to $v$ and denote its incoming arrows in $\Tilde{Q}$ as $a_1,\ldots, a_L$. We have
        \begin{align*}
            \mu_w(X) &= p\Big(\sum_{k=1}^l X_{a_k}X_{a_k}^* - \sum_{k=l+1}^L X_{a_k}^*X_{a_k} -  \sum_{k=1}^n X_{b_k}^*X_{b_k} \Big)\\
            &= p\Big(\sum_{\substack{k\in f(w)\\ k<g(w)}} (\lambda_k - \lambda_{g(w)}) \id_{Y(k)} - \sum_{\substack{k\in f(w)\\ k>g(w)}} (\lambda_{g(w)} - \lambda_{k}) \id_{Y(k)} - \sum_{k=1}^n \lambda_k \id_{Y(k)}\Big)\\
            &= p\Big(\sum_{k\in [n]\backslash{g(w)}} (\lambda_{k} - \lambda_{g(w)}) \id_{Y(k)} - \sum_{k=1}^n \lambda_k \id_{Y(k)}\Big)\\
            &= p\Big( - \lambda_{g(w)} \sum_{k=1}^n \id_{Y(k)}\Big) = 0
        \end{align*}
        For any other vertex $u$ we use a similar argument. Let $a\in \Tilde{Q}$ be its unique outgoing arrow and denote its incoming arrows as $a_1,\ldots, a_L \in \Tilde{Q}$. Let $w:=\Tilde{h}a$. In case $ta = u$ let 
        \[R:= -X_a^*X_a = -\sum_{k\in f(u)} |\lambda_k-\lambda_{g(w)}| \id_{Y(k)} = \sum_{k\in f(u)} (\lambda_{g(w)}-\lambda_{k}) \id_{Y(k)}.\]
        In case $ha = u$ let
        \[ R:=X_aX_a^* = \sum_{k\in f(u)} |\lambda_k-\lambda_{g(w)}| \id_{Y(k)} = \sum_{k\in f(u)} (\lambda_{g(w)}-\lambda_{k}) \id_{Y(k)}. \]
        Now we can calculate
        \begin{align*}
            \mu_u(X) &= p\Big(\sum_{k=1}^l X_{a_k}X_{a_k}^* - \sum_{k=l+1}^L X_{a_k}^*X_{a_k} +  R \Big)\\
            &= p\Big(\sum_{k\in f(u)\backslash{g(u)}} (\lambda_{k} - \lambda_{g(u)}) \id_{Y(k)} + \sum_{k\in f(u)} (\lambda_{g(w)}-\lambda_{k}) \id_{Y(k)} \Big)\\
            &=  p\Big((\lambda_{g(w)}-\lambda_{g(u)})\sum_{k\in f(u)}\id_{Y(k)}\Big) = 0
        \end{align*}
        Thus we obtain
        \[ \|\mu(X)\| \leq \frac{\|\mu_v(X)\|}{\sum_{k=1}^n \tr(Y_k^*Y_k)} \leq \frac{1}{2^{\lfloor\frac{n}{2}\rfloor} - 1}.\]
        It remains to show, that $X$ is unstable. In \Cref{weightmargin_star_quiver} we constructed a one parameter subgroup $M:\bC^* \to \ST_{\beta}$ killing $Y$. We extend this to a one parameter subgroup $\overline{M}:\bC^* \to \ST_{\alpha}$, where $\alpha = \dim(X)$. For each vertex $u\in Q_0\backslash{\{v\}}$, we have $X(u)= \bigoplus_{k\in f(u)} Y(k)$ and we let
        \[ \overline{M}_u(t) := \diag((M_k(t))_{k\in f(u)}).\]
        For $v\in Q_0$ we let $\overline{M}_u(t) :=M_0(t)$. For each arrow $b_1,\ldots, b_n$ we see
        \[ \lim_{t\to 0} (\overline{M}(t) \cdot X)_{b_k} = \lim_{t\to 0} M_0(t) Y_k M_k(t)^{-1} \pi_{Y(k)} = 0.\]
        For any other arrow $a\in Q_1\backslash{\{b_1,\ldots b_n\}}$ let $(w,u):=(\Tilde{h}a, \Tilde{t}a)$ and we see
        \[\lim_{t\to 0} (\overline{M}(t) \cdot X)_{a} = \lim_{t\to 0} \sum_{k\in f(u)} \sqrt{|\lambda_k-\lambda_{g(w)}|} \iota_{Y(k)}M_k(t)M_k(t)^{-1}\pi_{Y(k)} = X_a.\]
        Denote $X' := \lim_{t\to 0} \overline{M}(t) \cdot X$. This is a representation of a tree quiver which we will show to be unstable. Let $w:= tb$ be the vertex adjacent to $v$. We act at $w$ with the following matrix
        \[ \diag ((tI_{\beta(k)})_{k<g(w)}, t^s I_{\beta(g(w))}, (t^{-1}I_{\beta(k)})_{k>g(w)}), \]
        where $s:= \frac{\sum_{k>g(w)} \beta(k)-\sum_{k<g(w)} \beta(k)}{\beta(g(w))}$, such that the matrix has determinant one. Letting $t$ tend to zero kills all arrows adjacent to $w$ and we can proceed by induction sending $X'$ to zero.
    \end{proof}

    \begin{corollary}
        \label{weight_margin_connected quivers}
        Let $Q$ be a connected quiver with $n+1$ vertices. Then there exists a dimension vector $\alpha$ with $\alpha_{\max} \leq n^2$, such that
    \[ \gamma_{\ST}(Q,\alpha) \leq \frac{1}{2^{\lfloor \frac{n+1}{2}\rfloor}-1}. \]
    \begin{proof}
        W.l.o.g assume that $Q$ is a tree and let $Q'$ be the quiver obtained from $Q$ by replacing some arrow which is adjacent to a leaf with $n$ copies. Since the weightmargin does not see arrow multiplicity we have
        \[ \gamma_{\ST}(Q,\alpha) = \gamma_{\ST}(Q',\alpha) \leq \gamma_{\SL}(Q',\alpha) \leq \frac{1}{2^{\lfloor \frac{n+1}{2}\rfloor}-1},\]
        where $\alpha$ and the second inequality comes from \Cref{gap_arrow_multiplicity}. 
    \end{proof}
    \end{corollary}
\end{theorem}

%% file: real_minimization_problems.tex
\section{Real minimization problems}

In this appendix we solve some minimization problems over the reals, which were used in the proofs of \Cref{lem:graph_component_inequality} and \Cref{prop:component_lower_bound}.

\begin{lemma}
\label{minlemma4}
    Let $x,m\in \bR^n$, such that $m_i> 0$ for all $i\in[n]$. Then the following equation holds
    \[ \min_{x\in \bR^n} \frac{\sum_{i=1}^n m_i x_i^2}{(\sum x_i)^2} = \frac{1}{\sum_{i=1}^n m_i^{-1}}. \]
    \begin{proof}
    Using the Cauchy-Schwarz inequality in the second step we see
    \[ \left(\sum_{i=1}^n x_i\right)^2 = \left(\sum_{i=1}^n m_i^{-\frac{1}{2}} \cdot m_i^{\frac{1}{2}} x_i\right)^2 \leq \left(\sum_{i=1}^n m_i^{-1}\right) \cdot \left(\sum_{i=1}^n m_ix_i^2\right).\]
    This yields
    \[ \min_{x\in \bR^n} \frac{\sum_{i=1}^n m_i x_i^2}{(\sum x_i)^2} \geq  \frac{1}{\sum_{i=1}^n m_i^{-1}},\]
    and setting $x_i := m_i^{-1}$ we see that this minimum is indeed attained.
    \end{proof}
\end{lemma}

\begin{lemma}
\label{lem:minlemma2}
    Let $x_1,\ldots, x_n\geq 0$ and consider the function
    \[ f(x) = \frac{x_n^2 + \sum_{i=1}^{n-1} (x_{i+1}-x_i)^2}{(\sum_{i=1}^n x_i)^2}.\]
    Then we have \[\min f(x) = \frac{6}{n(n+1)(2n+1)} \geq 3(n+1)^{-3}.\]

    \begin{proof}
        Substituting $y_n := n x_n$ and $y_k := k(x_k - x_{k+1})$ for $k \leq n-1$ yields using \Cref{minlemma4}
        \[ \min_x f(x) = \min_y \frac{\sum_{k=1}^{n} k^{-2} y_k^2}{(\sum_{i=1}^n y_i)^2} = \left( \sum_{k=1}^n k^2 \right)^{-1} = \frac{6}{n(n+1)(2n+1)}.\]
    \end{proof}
\end{lemma}

\begin{lemma}
\label{minlemma_some_term_vanishes}
    Let $x_1,\ldots, x_{n}\in \bR$, such that $\sum_{i=1}^n x_i > 0$ and some $x_k$ is not positive. Consider the function
    \[ f(x) = \frac{\sum_{i=1}^{n-1} (x_{i+1}-x_i)^2}{(\sum_{i=1}^n x_i)^2}.\]
    Then we have \[\min f(x) = \frac{6}{(n-1)n(2n-1)} \geq 3n^{-3}.\]
    \begin{proof}
        Replacing each $x_i$ with $x_i-\min\{x_j \;|\; j\in [n]\}$ can only decrease $f(x)$, so we may assume that all $x_i\geq 0$. Let $k\in [n]$ be the smallest index, such that $x_k=0$. In case $k\in \{1,n\}$ we are done by \Cref{lem:minlemma2}. Otherwise we assume w.l.o.g. that $k\leq \frac{n+1}{2}$. Let $y:=\min\{x_i \;|\; 1\leq i <k\}$. Now we replace $x_i$ with $x_i-y$ for $1\leq i<k$ and with $x_i+y$ for $k\leq i\leq n$. This does not increase the numerator of $f$ and does increase the denominator (since more than half of the components of $x$ get increased), thus we can inductively reduce to the case $k=1$ and we are done.
    \end{proof}
\end{lemma}

\begin{lemma}
\label{minlemma3}
    Let $c\geq 0$ and $x_1,\ldots, x_n\geq 0$ as well as $\lambda_1,\ldots, \lambda_{n-1}\geq 0$ such that $\sum_{i=1}^{n} \lambda_i = c$. Consider the function
    \[f(x,\lambda) = \frac{\sum_{i=1}^{n-1} (x_i-x_{i+1})^2 + \lambda_i x_ix_{i+1}}{(\sum_{i=1}^n x_i)^2} .\]
    We have $f(x, \lambda) \geq  \min (\frac{1}{2}cn^{-2}, \frac{3}{2} n^{-3})$.

    \begin{proof}
        By homogeneity of the function we can restrict the domain to the compact set \\$\{ \sum_{i=1}^n x_i = 1 \;|\; x_i\geq 0 \; \forall i\in [n]\}$ and see that (for each choice of $\lambda$) the minimum must be attained at some point $x$ (that depends on $\lambda$).\\
        Since the function is linear in $\lambda$ we can assume $\lambda_j= c$ for some $j\in [n-1]$ and $\lambda_i = 0$ for $j\neq i$. Next we will show, that we can assume $j\in \{1,n-1\}$. Assume that $j \neq 1,n-1$. Then at the minimum we have $x_{i} \geq x_{i+1}$ for $i<j$. Otherwise there would be some minimal $i\leq j-1$ such that $x_{i}<x_{i+1}$ and we could replace $x_k$ with $x_k+ 2 (x_{i+1}-x_i)$ for $k\leq i$ which would not change the value of the numerator of $f(x,\lambda)$ while increasing the denominator, a contradiction to $x$ attaining the minimum of $f(\cdot, \lambda)$. In a similar way we obtain $x_i\leq x_{i+1}$ for $i\geq j+1$. Now assume w.l.o.g. that $x_1\leq x_n$. Then we "shift" all variables to the left by replacing $x_i$ with $x_{i+1}$ and $x_n$ with $x_n + (x_1-x_2)$ and $j$ with $j-1$. One easily checks that this doesn't change the numerator of $f(x,\lambda)$ while not decreasing the denominator. After this process $x_1\leq x_n$ still holds so we can repeat the process until $j=1$. In case $x_n>x_1$ we can use a similar "right shifting process" to obtain $j=n-1$.\\ Thus we can assume $j\in \{1,n-1\}$ and by symmetry $j=n-1$, so $f$ is of the form
        \[ f(x,\lambda) = \frac{\sum_{i=1}^{n-1} (x_i-x_{i+1})^2 + c x_{n-1}x_{n}}{(\sum_{i=1}^n x_i)^2}.\]
        Since $x$ is decreasing, we can lower bound $x_{n-1}x_n \geq x_n^2$ and substitute $y_n := n x_n$ and $y_i := i(x_i-x_{i+1})$ for $i \leq n-1$ to obtain 
        \begin{align*}
            f(x, \lambda) &\geq \frac{c x_{n}^2 + \sum_{i=1}^{n-1} (x_i-x_{i+1})^2}{(\sum_{i=1}^n x_i)^2} \\
            &= \frac{c n^{-2} y_{n}^2 + \sum_{i=1}^{n-1} i^{-2} y_i^2}{(\sum_{i=1}^n y_i)^2}\\
            &\geq (n^2 c^{-1} + \sum_{i=1}^{n-1} i^2)^{-1}\\
            &= (n^2 c^{-1} + 6^{-1}n(n-1)(2n-1))^{-1}\\
            &\geq (n^2 c^{-1} + 3^{-1}n^3)^{-1}\\
            &\geq \min\left(\frac{1}{2} cn^{-2}, \frac{3}{2} n^{-3}\right).
        \end{align*}
    \end{proof}
\end{lemma}

The next Lemma is slightly weaker and immediately follows from the previous one.

\JM{TODO : cut this lemma}
\begin{lemma}
    \label{lem:minlemma5}
    Let $c\geq 0$ and $x_1,\ldots, x_n\geq 0$ as well as $\lambda_1,\ldots, \lambda_{n}\geq 0$ such that $\sum_{i=1}^{n} \lambda_i = c$. Consider the function
    \[f(x,\lambda) = \frac{ (x_n-x_1)^2 + \lambda_n x_nx_1 + \sum_{i=1}^{n-1} (x_i-x_{i+1})^2 + \lambda_i x_ix_{i+1}}{(\sum_{i=1}^n x_i)^2} .\]
    We have $f(x, \lambda) \geq  \min (\frac{1}{2}cn^{-2}, \frac{3}{2} n^{-3})$.

    \begin{proof}
        Since the function is linear in $\lambda$, we can assume $\lambda_i=c$ for some index $i\in[n]$ and by a cyclic permutation of the variables we can further assume $i=n-1$. Using \Cref{minlemma3} we obtain 
        \[f(x,\lambda) \geq \frac{\sum_{i=1}^{n-1} (x_i-x_{i+1})^2 + \lambda_i x_ix_{i+1}}{(\sum_{i=1}^n x_i)^2} \geq \min (\frac{1}{2}cn^{-2}, \frac{3}{2} n^{-3}).\]
    \end{proof}
\end{lemma}

\begin{lemma}
\label{lem:sum_of_cubes}
    Let $a,b\geq 0$ and $x_1,\ldots, x_n \in [0,a]$, such that $\sum_{i=1}^n x_n\leq ab$. Then we have
    \[ \sum_{i=1}^n x_i^3 \leq ba^3.\]
    \begin{proof}
        We immediately see
        \[ \sum_{i=1}^n \left(\frac{x_i}{a}\right)^3 \leq \sum_{i=1}^n \frac{x_i}{a} \leq b.\]
        The result follows by multiplying with $a^3$.
    \end{proof}
\end{lemma}